\def\newremark#1{\@ifnextchar[{\@orem{#1}}{\@nrem{#1}}}
\def\@nrem#1#2{%
\@ifnextchar[{\@xnrem{#1}{#2}}{\@ynrem{#1}{#2}}}
\def\@xnrem#1#2[#3]{\expandafter\@ifdefinable\csname #1\endcsname
{\@definecounter{#1}\@addtoreset{#1}{#3}%
\expandafter\xdef\csname the#1\endcsname{\expandafter\noexpand
    \csname the#3\endcsname \@remcountersep \@remcounter{#1}}%
\global\@namedef{#1}{\@rem{#1}{#2}}\global\@namedef{end#1}{\@endremark}}}
\def\@ynrem#1#2{\expandafter\@ifdefinable\csname #1\endcsname
{\@definecounter{#1}%
\expandafter\xdef\csname the#1\endcsname{\@remcounter{#1}}%
\global\@namedef{#1}{\@rem{#1}{#2}}\global\@namedef{end#1}{\@endremark}}}
\def\@orem#1[#2]#3{\expandafter\@ifdefinable\csname #1\endcsname
    {\global\@namedef{the#1}{\@nameuse{the#2}}%
\global\@namedef{#1}{\@rem{#2}{#3}}%
\global\@namedef{end#1}{\@endremark}}}
\def\@rem#1#2{\refstepcounter
      {#1}\@ifnextchar[{\@yrem{#1}{#2}}{\@xrem{#1}{#2}}}
\def\@xrem#1#2{\@beginremark{#2}{\csname the#1\endcsname}\ignorespaces}
\def\@yrem#1#2[#3]{\@opargbeginremark{#2}{\csname
         the#1\endcsname}{#3}\ignorespaces}
\def\@remcounter#1{\noexpand\arabic{#1}}
\def\@remcountersep{.}
\def\@beginremark#1#2{\rm \trivlist \item[\hskip \labelsep{\bf #1\ #2}]}
\def\@opargbeginremark#1#2#3{\rm \trivlist
        \item[\hskip \labelsep{\bf #1\ #2\ (#3)}]}
\def\@endremark{\endtrivlist}
\newcommand{\biindice}[3]%
{

\begin{array}[t]{c}
#1\\
{\scriptstyle #2}\\
{\scriptstyle #3}
\end{array}

}
\def\a{\alpha}
\def\b{\beta}
\def\t{\theta}
\def\s{\sigma}
\def\t{\theta}
\def\l{\lambda}
\def\o{\omega}
\def\mc{\mathcal}
\newcommand{\R}{\mathbb R}
\newcommand{\Z}{\mathbb Z}
\newcommand{\overbar}[1]{\mkern 1.5mu\overline{\mkern-1.5mu#1\mkern-1.5mu}\mkern 1.5mu}
\numberwithin{equation}{section}
\theoremstyle{definition}
\theoremstyle{plain}
\newtheorem{theorem}{Theorem}[section]
\newtheorem{proposition}{Proposition}[section]
\newtheorem{lemma}{Lemma}[section]
\newtheorem{corollary}{Corollary}[section]
\newtheorem{remark}{Remark}[section]
\newtheorem{example}{Example}[section]
\title{\vskip-2.5cm
{Planar Hamiltonian systems: index theory and applications to the existence of subharmonics}
\thanks{Work written under the auspices of the Gruppo Nazionale per l'Analisi Matematica, la Probabilit\`{a}
e le loro Applicazioni (GNAMPA) of the Istituto Nazionale di Alta Matematica (INdAM); the Ministry of
Science, Technology and Universities of Spain, under Research Grant PGC2018-097104-B-100, and of the IMI of Complutense University.
The second author, ORCID 0000-0003-1184-6231, has been also supported by contract CT42/18-CT43/18 of Complutense University of Madrid.}}
\author{
\sc
Alberto Boscaggin
\\
\small
Universit\`{a} degli Studi di Torino
\\
\small
Dipartimento di Matematica ``Giuseppe Peano''
\\
\small  Via Carlo Alberto 10, 10123 Torino, Italy. 
\\
\small E-mail: {\tt alberto.boscaggin@unito.it}
\medskip
\\
\sc Eduardo Mu\~{n}oz-Hern\'andez
\\
\small
Universidad Complutense de Madrid
\\
\small
Instituto de Matem\'{a}tica Interdisciplinar (IMI)
\\
\small
Departamento de An\'alisis Matem\'atico y Matem\'atica Aplicada
\\
\small  Plaza de las Ciencias 3, 28040   Madrid, Spain
\\
\small E-mail: {\tt eduardmu@ucm.es }
}
\date{}
\numberwithin{equation}{section}
\begin{document}

\maketitle

{

\begin{abstract}
We consider a planar Hamiltonian system of the type
$Jz' = \nabla_z H(t,z)$, 
where $H: \mathbb{R} \times \mathbb{R}^2 \to \mathbb{R}$ is a function periodic in the time variable, such that $\nabla_z H(t,0) \equiv 0$ and $\nabla_z H(t,z)$ is asymptotically linear for $\vert z \vert \to +\infty$.
After revisiting the index theory for linear planar Hamiltonian systems, by using the Poincar\'e-Birkhoff fixed point theorem we prove that the above nonlinear system has subharmonic solutions of any order $k$ large enough, whenever the rotation numbers (or, equivalently, the mean Conley-Zehnder indices) of the linearizations of the system at zero and at infinity are different. 
Applications are given to the case of planar Hamiltonian systems coming from second order scalar ODEs. 
\\
\\
{\it 2010 Mathematics Subject Classification:} 34C25, 37B30, 37J12. 
\\
\\
{\it Keywords and Phrases:} Planar Hamiltonian systems, subharmonic solutions, Conley-Zehnder index, rotation number, Poincar\'e-Birkhoff theorem. 
\end{abstract}

}


\section{Introduction}\label{sec1}
\noindent \noindent
In this paper, we deal with planar Hamiltonian systems of the type
\begin{equation}\label{hsintro}
Jz' = \nabla_z H(t,z), \qquad z \in \mathbb{R}^2,
\end{equation}
where $J$ is the standard symplectic matrix (as defined in \eqref{Jstandard}) and the Hamiltonian function $H: \mathbb{R} \times \mathbb{R}^2 \to \mathbb{R}$ is $T$-periodic in the first variable. More precisely, we are concerned with the case in which system \eqref{hsintro} can be linearized at zero and at infinity, meaning that 
$$
\nabla_z H(t,z) = S_0(t) z  + o (\vert z \vert), \qquad z \to 0, \, \text{ uniformly in } t \in [0,T],
$$
and
$$
\nabla_z H(t,z) = S_\infty(t) z  + o (\vert z \vert), \qquad \vert z \vert \to +\infty, \, \text{ uniformly in } t \in [0,T],
$$
with $S_0(t), S_\infty(t)$ continuous path of symmetric matrices.

As carefully discussed in the pioneering paper \cite{MRZ}, a powerful tool to be applied in this situation in order to investigate the $T$-periodic solvability of the system is represented by the celebrated Poincar\'e-Bikhoff fixed point theorem (we refer to \cite{DaRe02} for an introduction to this theorem and to \cite[Introduction]{FoSaZa12} for a more updated bibliography, as well as for a discussion about some controversial historical issues). Indeed, taking into account that the Hamiltonian structure of system \eqref{hsintro} yields the area-preservation property of the associated Poincar\'e map, 
a planar annulus can be naturally constructed for the application of the fixed point theorem. More precisely, one can consider an annulus centered at the origin, and having a sufficiently small inner radius and a sufficiently large outer radius: in this way, the usual boundary twist condition of the Poincar\'e-Birkhoff theorem is implied by the existence of a sufficiently large gap for the winding numbers around the origin of the solutions of system \eqref{hsintro} departing, respectively, from the inner and on the outer radius (see \cite[Th. A]{MRZ}). 
The fact that these radii are chosen small and large enough ensures that the solutions departing from them differ not too much from the solutions of the linear systems $Jz' = S_0(t)z$ and $Jz' = S_\infty(t)z$, respectively: as a consequence, the existence of $T$-periodic solutions to the nonlinear system \eqref{hsintro} can be ensured by imposing suitable conditions on (the winding number of the solutions of) the systems linearizing \eqref{hsintro} at zero and at infinity. 

As a consequence of the key \cite[Lemma 4]{MRZ}, the winding number of the solutions of a (non-resonant) linear planar Hamiltonian system can be completely characterized in terms of its so-called Conley-Zehnder index\footnote{In \cite{Abb-book,MRZ} the name Maslov index is used. Actually, the Maslov index is a more general object, defined as an intersection index for a pair of paths in the Lagrangian Grassmanian: it turns out that the Conley-Zehnder index (of a linear Hamiltonian system with periodic coefficients) can be obtained as a particular case of this general setting,  cf. \cite{CLM94}.  Nowadays, the name Conley-Zehnder index is typically preferred.}, a topological index of symplectic nature which has a crucial role when investigating periodic solutions of Hamiltonian systems (also in higher dimension) via variational methods, see \cite{Abb-book}. In such a way, a sharp comparison between the results given by the Poincar\'e-Birkhoff theorem with the ones obtained in the seminal papers \cite{AmZe80,CoZe84} is given (see again \cite{MRZ} as well as the more recent paper \cite{GiMa}, where some improvements are proposed). The particular case of planar Hamiltonian systems coming from scalar second order equations of the type $u'' + f(t,u) = 0$ is treated in detail in \cite{MaReTo14}: in this situation, the natural index to be used is the usual Morse index of the Hill's equation $u'' + q(t)u = 0$, to which the Conley-Zehnder index actually reduces (in the non-resonant setting).

Along this line of research, in this paper we use index theory to investigate the existence of \textit{subharmonic solutions} to \eqref{hsintro}, namely, solutions with minimal period $kT$, where $k \geq 2$ is an integer number. Indeed, in spite of the extensive bibliography dealing with subharmonic solutions of Hamiltonian systems (see, for instance, the bibliography in \cite{BoZa12,FoTo19}), it seems that this point of view has not been systematically developed so far and the corresponding picture appears quite fragmentary. In particular, in \cite{Abb} some results are proved, via variational methods, relying on the Conley-Zehnder indices of the linerizations at zero and at infinity; on the other hand, in the very recent paper \cite{WaQi21} the notion of rotation number (in the sense of Moser) is used and the Poincar\'e-Birkhoff is applied, under some extra assumptions. Finally, in \cite{BoFe18,BoOrZa14} some particular situations are analyzed, in the context of scalar second order equations. 

The paper is organized as follows. In Section \ref{sec2} we provide a recap of the index theory for linear planar Hamiltonian systems with periodic coefficients, with special emphasis on the relation between the Conley-Zehnder index and the winding number of the solutions. 
Even if we essentially rely on the results already given in \cite{MRZ}, our point of view is slightly different. Indeed, we move from an a priori classification for the angular behavior of the solutions of a (possibly resonant) linear Hamiltonian system, see Proposition \ref{prop2.1}: in such a way, we are naturally led to a definition of the Conley-Zehnder index for resonant systems, which differs from the classical one but seems to be more appropriate from the point of view of the rotational behavior of the solutions (and, in turn, for the application of the Poincar\'e-Birkhoff theorem to nonlinear systems). As a by-product of the analysis, the relation with the stability of the system is also presented.
Our main result of the section, Theorem \ref{th2.2}, provides a synoptical table collecting all this information: even if this result is not new in a strict sense (apart from the definition of the index for resonant sytems), we hope that our presentation and proof, which is as much self-contained as possible, can be of some interest for people working in the field. 

In Section \ref{sec3} we deal again with linear planar Hamiltonian systems, focusing our attention on two asymptotic indices
which turn out to be the crucial ones when dealing with the problem of subharmonics. 
Such indices are known as \textit{mean Conley-Zehnder index} and \textit{rotation number} and they are defined, respectively, as 
$$
m = \lim_{k \to +\infty}\frac{i_{kT}}{k} 
$$
and
$$
\rho = \lim_{k \to +\infty} \frac{\theta(kT;\omega)}{k},
$$
where $i_{kT}$ is the Conley-Zehnder index of the linear system on the interval $[0,kT]$ and $\theta(t;\omega)$ is the clockwise polar angle of a solution starting at $e^{-i\omega}$ (incidentally, it can be proved that the limit defining $\rho$ does not depend on $\omega$). The main result of the section, Theorem \ref{mequalrho}, states that these indices actually always coincide (up to a constant, precisely $m = 2\rho$): even if
this fact is somewhat expected, we do not know a reference where a proof is given (in the general setting of a linear Hamiltonian system). 
In passing, in this section we also provide some sharp iteration formula for the Conley-Zehnder index $i_{kT}$ on varying of $k$, and we again discuss the relation between the asymptotic indices $m$ and $\rho$ and the stability of the system. 

Finally, in Section \ref{sec4} we investigate the existence of subharmonic solutions to the nonlinear system \eqref{hsintro}. 
Our main result, Theorem \ref{thmain}, which unifies and extends the ones obtained in \cite{Abb,WaQi21}, 
ensures that whenever the rotation numbers (or, equivalently, the mean Conley-Zehnder indices) of the linearizations of system \eqref{hsintro} at zero and infinity are different, the existence of subharmonic solutions of order $k$ is guaranteed, for every integer $k$ sufficiently large; moreover, the number of subharmonics of order $k$ goes to infinity with $k$. Several consequences and variants of this result are then analyzed; in particular, a new application to a Lotka-Volterra system with periodic coefficients is presented. Finally, some corollaries are given for planar Hamiltonian systems coming from second order scalar equations of the type
$(\varphi(u'))' + f(t,u) = 0$, where $u \mapsto - (\varphi(u'))'$ is either the usual (one-dimensional) laplacian or a singular $\varphi$-laplacian type operator, generalizing some results previously given in \cite{BoFe18,BoGa13,BoOrZa14}.

\section{Linear Hamiltonian systems: a recap}\label{sec2}

In this section we collect some useful material about linear planar Hamiltonian systems with periodic coefficients. More precisely, after having recalled some preliminary facts about the monodromy matrix (Section \ref{sec2.1}), the notion of stability (Section \ref{sec2.2}) and the Conley-Zehnder index (Section \ref{sec2.3}), we focus on the rotational behavior of the solutions, 
by giving an exhaustive description of the possible dynamics (Section \ref{sec2.4}). Then, in Section \ref{sec2.5} we 
provide a complete analysis of the relations between winding number, Conley-Zehnder index and stability.

In principle, the content of this section should be well known; however, it seems not easy to find a reference where all these notions are carefully discussed. Thus, we have tried to make our presentation as self-contained as possible, with the hope of providing an easy-to-read and useful recap.

\subsection{Some preliminaries}
Throughout the section, we are going to consider the linear planar Hamiltonian system
\begin{equation}
\label{2.1}	
Jz'=S(t)z,\qquad z=(z_1,z_2)\in\mathbb{R}^2
\end{equation}	
where
\begin{equation}
\label{Jstandard}
J = 
\begin{pmatrix}
0 &\; -1 \\
1 &\; 0 \\
\end{pmatrix}
\end{equation}
is the standard symplectic matrix and $S(t)$ is a continuous and $T$-periodic path of symmetric $2\times2$ matrix (with $T > 0$ given). 
A solution $z(t)$ of system \eqref{2.1} is called $T$-periodic if $z(t + T) = z(t)$ for every $t \in \mathbb{R}$
and $T$-antiperiodic if $z(t + T) = -z(t)$ for every $t \in \mathbb{R}$. Notice that a $T$-antiperiodic solution is $2T$-periodic (and, of course, is not $T$-periodic unless it is the trivial solution). Finally, a solution $z(t)$ of system \eqref{2.1} is called non-trivial if $z(0) \neq 0$: by the uniqueness of the solutions for the Cauchy problems, this implies $z(t) \neq 0$ for every $t \in \mathbb{R}$.

\subsubsection{The monodromy matrix}
\label{sec2.1}
As well known, the solution $z(t;z_0)$ of the initial value problem 
\begin{equation*}
Jz'=S(t)z,\qquad z(0)=z_0\in\R^2,
\end{equation*}
can be written as
\[
z(t;z_0)=M(t)z_0,
\]
where $M(t)$ is the so-called fundamental matrix of \eqref{2.1}, that is, the matrix solving the initial value problem 
\begin{equation}
\label{2.2}
\left\{
\begin{array}{ll}
&JM'=S(t)M,\\[2pt]
&M(0)=I,
\end{array}	
\right.
\end{equation}	
with $I$ the identity matrix in $\mathbb{R}^2$. Hence, the Poincar\'{e} map at time $T$ of \eqref{2.1} is
\begin{equation*}
\begin{array}{ll}
\Psi_T:&\mathbb{R}^2\rightarrow\mathbb{R}^2\\
&z_0\;\mapsto z(T;z_0)=M(T)z_0. \end{array}
\end{equation*}
The next lemma states some basic important properties of the monodromy matrix $M(T)$ and its eigenvalues (also called \emph{Floquet multipliers}). 
\begin{lemma}
\label{le2.1}
The following hold true.
\begin{enumerate}
\item[(M1)] The characteristic polynomial of $M(T)$ is  $P_{M(T)}(\mu)=\mu^2-[{\rm tr}\,M(T)]\mu+1$. Hence, the eigenvalues of $M(T)$ belong to one of the following three classes:
\begin{equation*}
\begin{array}{ll}
(1)\;\;&\mu_1=1/\mu_2\in\mathbb{R}\setminus\{\pm1\},\\[2pt]
(2)\;\;&\mu_1=\mu_2=\pm1,\\[2pt]
(3)\;\;&\mu_1=\overbar{\mu_2}\in\mathbb{S}^1\setminus\{\pm1\},
\end{array} 	
\end{equation*}
where $\mathbb{S}^1 = \{z\in\mathbb{C}\,:\,|z|=1\}$. 	 
\item[(M2)] System \eqref{2.1} has a non-trivial $T$-periodic solution (resp., $T$-antiperiodic solution) if, and only if, $\mu_1 = \mu_2 = 1$ (resp., $\mu_1 = \mu_2 = -1$).  
\end{enumerate}
\end{lemma}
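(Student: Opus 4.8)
The plan is to derive both statements from the single structural fact that the fundamental matrix $M(t)$ is \emph{symplectic} for every $t$, i.e. $M(t)^{\top} J M(t) = J$. This is the precise manifestation of the Hamiltonian nature of \eqref{2.1}, and it is here that the symmetry of $S(t)$ and the antisymmetry of $J$ enter. To prove it, I would first rewrite \eqref{2.1} as $z' = -J S(t) z$, using $J^{-1} = J^{\top} = -J$, so that $M(t)$ solves $M' = A(t) M$ with $A(t) = -J S(t)$. Differentiating and exploiting the equation, one gets
$$\frac{d}{dt}\big(M^{\top} J M\big) = M^{\top}\big(A^{\top} J + J A\big) M.$$
A short computation using $S^{\top} = S$, $J^{\top} = -J$ and $J^2 = -I$ shows $A^{\top} J + J A = 0$, so that $M(t)^{\top} J M(t)$ is constant in $t$ and therefore equals its value at $t = 0$, namely $I^{\top} J I = J$.

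To obtain (M1), I would use that any $2\times 2$ matrix satisfies the identity $M^{\top} J M = (\det M)\, J$; combined with symplecticity this forces $\det M(T) = 1$. Since the characteristic polynomial of a $2\times 2$ matrix is $\mu^2 - ({\rm tr}\, M(T))\mu + \det M(T)$, the claimed formula $P_{M(T)}(\mu) = \mu^2 - ({\rm tr}\, M(T))\mu + 1$ follows at once. Writing $\tau = {\rm tr}\, M(T) \in \mathbb{R}$, the two roots satisfy $\mu_1 + \mu_2 = \tau$ and $\mu_1 \mu_2 = 1$, and the trichotomy is then just a discussion of the discriminant $\tau^2 - 4$: for $|\tau| > 2$ one has two distinct real reciprocal roots (class $(1)$); for $|\tau| = 2$ a double root equal to $\pm 1$ (class $(2)$); for $|\tau| < 2$ a complex-conjugate pair which, having product $1$, lies on $\mathbb{S}^1$ and is distinct from $\pm 1$ (class $(3)$).

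For (M2), the key observation is that a non-trivial $T$-periodic solution is exactly a nonzero fixed point of the Poincar\'e map $\Psi_T$, i.e. a $z_0 \neq 0$ with $M(T) z_0 = z_0$; hence such a solution exists if and only if $1$ is an eigenvalue of $M(T)$. The reciprocity relation $\mu_1\mu_2 = 1$ from (M1) then upgrades this to the full statement: if $1$ is an eigenvalue, the other is $1/1 = 1$, so $\mu_1 = \mu_2 = 1$, and conversely. The antiperiodic case is verbatim the same with $-1$ in place of $1$, using $M(T)z_0 = -z_0$ and again $\mu_1\mu_2 = 1$ to force $\mu_1 = \mu_2 = -1$. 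The one point that deserves care, and which I would settle by uniqueness for the Cauchy problem, is passing from the condition at time $T$ to (anti)periodicity for all $t$: if $M(T)z_0 = \pm z_0$, then both $t \mapsto z(t+T;z_0)$ and $t \mapsto \pm z(t;z_0)$ solve \eqref{2.1} (the first by the $T$-periodicity of $S$) with the same initial datum $\pm z_0$, hence they coincide, giving $z(t+T;z_0) = \pm z(t;z_0)$ for all $t$.

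No step is genuinely hard: the only real content is the symplectic identity of the first paragraph, from which $\det M(T) = 1$ and the whole eigenvalue structure are immediate. The only two subtleties to flag are the reciprocity argument that turns ``$\pm 1$ is an eigenvalue'' into ``both eigenvalues equal $\pm 1$'', and the uniqueness argument that promotes the time-$T$ identity to genuine (anti)periodicity.
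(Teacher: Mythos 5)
Your proposal is correct and follows essentially the same route as the paper: both prove symplecticity of $M(t)$ by differentiating $M^{\top}JM$ using the symmetry of $S(t)$, deduce $\det M(T)=1$ to get the characteristic polynomial and the eigenvalue trichotomy, and then identify (anti)periodic solutions with eigenvectors of $M(T)$ for the eigenvalue $\pm 1$, invoking the reciprocity $\mu_1\mu_2=1$ to force both eigenvalues equal. The only differences are presentational (you spell out the discriminant discussion and the uniqueness argument promoting the time-$T$ identity to full (anti)periodicity, which the paper asserts more tersely).
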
	
\begin{proof}

In order to prove (M1), we first note that, since 
$S$ is symmetric, by  \eqref{2.2} the equation
\[
(M^TJM)'=(M')^TJM+M^TJM'=M^T(-S^T+S)M=0
\]
holds. Thus, as $M(0)=I$, 
\begin{equation}
\label{2.3}
M(t)^TJM(t)=M(0)^TJM(0)=J
\end{equation}
for all $t\in[0,T]$. Since
\begin{equation*}
M(t)^TJM(t) = 
\begin{pmatrix}
0 &\; -\det M(t) \\
\det M(t) &\; 0 \\
\end{pmatrix},
\end{equation*}
equality \eqref{2.3} holds if, and only if, $\det M(t)=1$ for all $t\in[0,T]$. In particular $\det M(T)=1$ and, thus, the characteristic polynomial of $M(T)$ is $P_{M(T)}(\mu)=\mu^2-[{\rm tr}\,M(T)]\mu+1$. Then, the eigenvalues $\mu_1$, $\mu_2$ are complex conjugate satisfying $\mu_1\mu_2=1$ and, hence, they belong to (1), (2) or (3). 

As for (M2) we first observe that, by the $T$-periodicity of the matrix $S(t)$, the solution $z(t;z_0)$ is $T$-periodic (resp., $T$-antiperiodic) if, and only if, $z(T;z_0) = z_0$ (resp., $z(T;z_0) = -z_0$). This means $M(T)z_0 = z_0$ (resp., $M(T)z_0 = -z_0$), and thus, for $z_0 \neq 0$, $z(t;z_0)$ is $T$-periodic (resp., $T$-antiperiodic) if and only if $z_0$ is an eigenvector of $M(T)$ with eigenvalue equal to $1$ (resp., $-1$). By (M1), in this case it must be $\mu_1 = \mu_2 = 1$ (resp., $\mu_1 = \mu_2 = -1$).
\end{proof}

\begin{remark}
\label{remsymp}
\rm 
Let us observe that the equality \eqref{2.3} actually proves that the matrix $M(t)$ is symplectic for every $t \in [0,T]$, cf. Section \ref{Sec2-1-4}. Thus, it is easily seen that the property (M1) of Lemma \ref{le2.1} holds more in general for a $2 \times 2$ symplectic matrix.
\end{remark}

According to (M2) of Lemma \ref{le2.1}, following a standard terminology from now on we will say that system \eqref{2.1} is   
$$
\left\{
\begin{array}{ll}
\hbox{\emph{hyperbolic}}\;\;&\hbox{if}\;\;\mu_1=1/\mu_2\in\mathbb{R}\setminus\{\pm1\},\\[2pt]
\hbox{\emph{parabolic}}\;\;&\hbox{if}\;\;\mu_1=\mu_2=\pm1,\\[2pt]
\hbox{\emph{elliptic}}\;\;&\hbox{if}\;\;\mu_1=\overbar{\mu_2}\in\mathbb{S}^1\setminus\{\pm1\}.
\end{array} 
\right. 	
$$
Moreover, we will say that system \eqref{2.1} is $T$-\emph{nonresonant} (or $T$-\emph{nondegenerate}) if its unique $T$-periodic solution is the trivial one, and $T$-\emph{resonant} (or $T$-\emph{degenerate}) otherwise. In view of Lemma \ref{le2.1}, we have that for a $T$-resonant system the monodromy matrix has eigenvalues 
$\mu_1 = \mu_2 = 1$. Hence, a $T$-resonant system is always parabolic (while the converse is not true).
\par
For further convenience, we now recall that system \eqref{2.1} can be transformed into an autonomous system in $\mathbb{C}^2$ via a $T$-periodic linear change of variables with complex coefficients. This is the content of Floquet's theorem (see, for example, \cite[Ch. 1, Sec. 1]{Ek} or \cite[Th. 3.4.2]{MHO}): precisely, given $M(t)$ the fundamental matrix of \eqref{2.1}, there exist a (constant) complex matrix $C:=T^{-1}\log M(T)$ (for a detailed explanation on logarithms of matrices see \cite[Sec. 4.3.2]{MHO}) and a $T$-periodic matrix $P(t)$ with complex entries such that 
\begin{equation}
\label{2.4}
M(t)=P(t)e^{tC}\qquad\hbox{for all}\;t\in\R,
\end{equation}
where $e^{tC}$ is the fundamental matrix of the constant coefficient system in $\mathbb{C}^2$
\begin{equation}
\label{2.5}
u'=Cu\,. 
\end{equation}
Since $M(0)=I$ and $P(0) = P(T)$, it follows that $M(T)=P(T)e^{TC}=e^{TC}$ and, thus,  
\begin{equation}
\label{2.6}
\s(M(T))=e^{T\s(C)}\,. 
\end{equation}
The eigenvalues of the matrix $C$ are usually called \emph{Floquet exponents}. By \eqref{2.6}, given $\mu\in\s(M(T))$ there exists $\l\in\s(C)$, such that 
\begin{equation*}
\mu=e^{T\l}\,. 
\end{equation*}
Notice that the Floquet exponents depend on the choice of $C$; however, the Floquet multipliers give the Floquet exponents modulo $2\pi i/T$.

The next section collects some classical results that allow to determine the stability of system \eqref{2.1} according to the eigenvalues of $M(T)$ and $C$.

\subsubsection{The notion of stability}\label{sec2.2}

It is said that the linear Hamiltonian system \eqref{2.1} is \emph{stable} if all its solutions are bounded on $\R$ (of course, this corresponds to the usual notion of Lyapunov stability of the origin). The next lemma is a well known result (see, e.g., \cite[Prop. 1]{Ek} or \cite[Prop. 4.2.2]{MHO}) that we include for the sake of completeness. 

\begin{lemma}
\label{le2.2}
System \eqref{2.1} is stable if, and only if, the monodromy matrix $M(T)$ is diagonalizable and its spectrum lies in $\mathbb{S}^1$. This is also equivalent to the fact that the sequence of matrices $\{M(T)^k\}_{k \in \mathbb{Z}}$ is bounded.
\end{lemma}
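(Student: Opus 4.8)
The plan is to pass from the notion of stability to the boundedness of the fundamental matrix, then use the Floquet decomposition \eqref{2.4} to reduce the problem to the behaviour of the powers of $M(T)$, and finally settle the matter by a direct analysis of the Jordan structure of $M(T)$. First I would observe that, since every solution of \eqref{2.1} has the form $z(t) = M(t)z_0$, system \eqref{2.1} is stable if and only if $\sup_{t \in \R} \Vert M(t) \Vert < +\infty$: if $M$ is bounded then so is each $z(t) = M(t)z_0$, while conversely boundedness of the solutions with $z_0$ ranging in a basis of $\R^2$ forces the columns of $M(t)$, hence $M(t)$ itself, to be bounded. Next, writing $M(t) = P(t) e^{tC}$ as in \eqref{2.4}, I note that $P(t) = M(t)e^{-tC}$ is invertible for every $t$ (being a product of invertible matrices) and, together with $P(t)^{-1}$, is continuous and $T$-periodic; hence both $\Vert P(t)\Vert$ and $\Vert P(t)^{-1}\Vert$ are bounded on $\R$. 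Since $e^{tC} = P(t)^{-1}M(t)$ and $M(t) = P(t)e^{tC}$, it follows that $M(t)$ is bounded on $\R$ if and only if $e^{tC}$ is, so that stability of \eqref{2.1} is equivalent to the boundedness of $\{e^{tC}\}_{t \in \R}$ (the fact that $P$ and $C$ may be complex is harmless, as we only use matrix norms).

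The second step is to replace this continuous family by the discrete one $\{M(T)^k\}_{k \in \Z}$. Recalling from \eqref{2.6} that $M(T) = e^{TC}$, one has $M(T)^k = e^{kTC}$ for all $k \in \Z$, so $\{M(T)^k\}_{k\in\Z}$ is a subfamily of $\{e^{tC}\}_{t\in\R}$ and boundedness of the latter trivially implies that of the former. For the converse I would use the decomposition $e^{tC} = e^{(t - kT)C}\, e^{kTC}$, valid for $t \in [kT, (k+1)T]$: since $s \mapsto e^{sC}$ is continuous on the compact interval $[0,T]$ it is bounded there, so a uniform bound on $\{e^{kTC}\}_{k\in\Z}$ yields a uniform bound on $\{e^{tC}\}_{t\in\R}$. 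This shows that stability is equivalent to boundedness of the sequence $\{M(T)^k\}_{k\in\Z}$, which is the last assertion of the statement.

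It remains to characterize boundedness of $\{M(T)^k\}_{k\in\Z}$ in terms of the spectrum and diagonalizability of $A := M(T)$; this is pure linear algebra. If $A$ is diagonalizable with $\s(A) \subset \mathbb{S}^1$, writing $A = V\,\mathrm{diag}(\mu_1,\mu_2)\,V^{-1}$ gives $A^k = V\,\mathrm{diag}(\mu_1^k,\mu_2^k)\,V^{-1}$, which is bounded since $\vert \mu_i^k\vert = 1$. For the converse, suppose $\{A^k\}_{k\in\Z}$ is bounded. If some eigenvalue $\mu$ had $\vert \mu\vert \neq 1$, then one of $\vert \mu\vert^k$, $\vert\mu\vert^{-k}$ would diverge, forcing $\Vert A^k\Vert$ or $\Vert A^{-k}\Vert$ to blow up; hence $\s(A)\subset\mathbb{S}^1$. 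Finally, if $A$ were not diagonalizable it would carry a non-trivial Jordan block; since $\det A = 1$ by (M1) of Lemma \ref{le2.1}, the two eigenvalues then coincide and equal $\pm 1$, so $A$ would be similar to $\bigl(\begin{smallmatrix} \mu & 1 \\ 0 & \mu\end{smallmatrix}\bigr)$ with $\mu = \pm 1$, whose $k$-th power has off-diagonal entry $k\mu^{k-1}$ of modulus $\vert k\vert \to +\infty$ — again contradicting boundedness. Thus $A$ is diagonalizable with spectrum in $\mathbb{S}^1$, which closes the chain of equivalences.

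The step I expect to require the most care is the passage from the continuous family $\{e^{tC}\}_{t\in\R}$ to the discrete powers $\{M(T)^k\}_{k\in\Z}$, together with the verification that $P$ and $P^{-1}$ are genuinely bounded, since this is where the $T$-periodicity encoded in Floquet's theorem is really exploited; the concluding linear-algebra dichotomy, although it implicitly sorts the hyperbolic, elliptic and (degenerate and non-degenerate) parabolic cases of Lemma \ref{le2.1}, is then routine.
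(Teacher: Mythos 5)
Your proof is correct, and while it rests on the same Floquet decomposition \eqref{2.4} as the paper, the route through it is genuinely different. The paper works at the level of solutions: it writes $z(t)=P(t)u(t)$ with $u$ solving the constant-coefficient system \eqref{2.5}, transfers diagonalizability from $M(T)$ to $C$ (via the coincidence of the eigenspaces), and then reads off boundedness from the explicit solution forms $e^{\lambda_i t}v_i$ (respectively $(u_1+tu_2)e^{\lambda t}$ in the non-diagonalizable case), so the spectral dichotomy is carried out on the Floquet exponents of $C$. You instead work at the level of matrix norms: stability $\Leftrightarrow$ $\Vert M(t)\Vert$ bounded $\Leftrightarrow$ $\Vert e^{tC}\Vert$ bounded $\Leftrightarrow$ $\{M(T)^k\}_{k\in\mathbb{Z}}$ bounded, and then you settle the final equivalence by a Jordan-form analysis of $M(T)$ itself, using $\det M(T)=1$ from (M1) of Lemma \ref{le2.1} to pin the repeated eigenvalue at $\pm 1$. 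Your version has two concrete advantages: it avoids the transfer of diagonalizability between $C$ and $M(T)$ altogether, and it makes fully rigorous the passage between the discrete powers and the continuous family, which the paper compresses into the informal expression $z(t)=P(t)M(T)^{t/T}z_0$ (meaningful only when read as $e^{tC}$). What the paper's approach buys in exchange is an explicit description of the solutions in terms of Floquet exponents, which matches the language of Section \ref{sec2.1} and of the stability discussion that follows; your concluding linear-algebra step, by contrast, is self-contained and would apply verbatim to any $2\times 2$ matrix of determinant one.
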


\begin{proof}
By \eqref{2.4}, given solutions $z(t)$ and $u(t)$ of \eqref{2.1} and \eqref{2.5}, respectively, with the same initial condition, it holds that 
\[
z(t)=P(t)u(t)\quad\hbox{for all}\;t\in\R
\]
and, hence, $z(t)$ is bounded for all $t\in\R$ if, and only if, $u(t)$ is bounded for all $t\in\R$. Moreover, the eigenspaces associated to $e^{T\l}\in\s(M(T))$ and $\l\in\s(C)$ coincide and, hence, $M(T)$ is diagonalizable if, and only if, $C$ is diagonalizable. 
\par
If $C$ is diagonalizable, there exists a basis of eigenvectors $v_1,v_2$ associated with the eigenvalues $\l_1,\l_2$, respectively, and every solution $u(t)$ of \eqref{2.5} is a linear combination of the solutions 
\[
u_1(t)=e^{\l_1 t}v_1\quad\hbox{and}\quad u_2(t)=e^{\l_2 t}v_2.
\]
Then, $u_1$ and $u_2$ are bounded for all $t$ if, and only if, the Floquet exponents $\l_1$ and $\l_2$ have zero real part, i.e., the associated Floquet multipliers $\mu_1$ and $\mu_2$ lie in $\mathbb{S}_1$. Conversely, if $C$ is not diagonalizable and $\lambda$ is its (unique) eigenvalue, there exists a solution of \eqref{2.5} of the form $u(t)=(u_1 + t u_2) e^{\l t}$ with $u_1,u_2$ suitable vectors (see \cite[Sec. III.4]{Ha}), which is not bounded on $\R$. 
\par 
On the other hand, by \eqref{2.4}, every solution $z(t)$ of system \eqref{2.1} can be written as 
\[
z(t)=M(t)z_0=P(t)M(T)^{t/T}z_0
\]
and, thus, \eqref{2.1} is stable if, and only if, $M(T)^{k}$ remains bounded as $k\in\mathbb{Z}$. 
\end{proof}

\begin{remark}
\rm In a general (possibly non-Hamiltonian) linear system a distinction can be made between \emph{positive stability} (the solutions are bounded for $t>0$) and \emph{negative stability} (the solutions are bounded for $t<0$). By redoing the proof of Lemma \ref{le2.2}, a linear  system is positive stable (resp.,  negative stable) if, and only if, the monodromy matrix is diagonalizable and its eigenvalues lie in the unit disk $\mathbb{D}:=\{z\in\mathbb{C}\,:\,|z|\leq1\}$ (respec. in the set $\overbar{\R^2\setminus\mathbb{D}}$). However, in a linear Hamiltonian system with a monodromy matrix $M(T)$, if $\mu\in\s(M(T))$, then $\mu^{-1}\in\s(M(T))$ (see (M1) in Lemma \ref{le2.1} for the planar case and \cite[Cor. 6, Ch.1, Sec. 1]{Ek} for the general case). Hence, $|\mu|<1$ if, and only if, $|\mu^{-1}|>1$ and, thus, positive and negative stability are equivalent to stability. 
\end{remark}
\par 
\noindent In a linear Hamiltonian system a further type of stability, called \emph{strong (or parametric) stability}, can be considered, as well. Precisely, system \eqref{2.1} is said to be strongly stable if there exists $\varepsilon>0$ such that all symplectic matrices $N$ satisfying	
\[
\left\lVert M(T)-N\right\rVert<\varepsilon
\]
are stable, in the sense that the sequence $\{N^k\}_{k \in \mathbb{Z}}$ is bounded.
Similarly as before, the next lemma shows the characterization of strong stability in terms of the eigenvalues of $M(T)$. 
\begin{lemma}
\label{le2.3}
System \eqref{2.1} is strongly stable if, and only if, the spectrum of the monodromy matrix $M(T)$ lies in $\mathbb{S}^1\setminus\{\pm 1\}$. 
\end{lemma}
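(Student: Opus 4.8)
The plan is to reduce the whole statement to a condition on the trace of $M(T)$. By the computation leading to \eqref{2.3} (cf. Remark \ref{remsymp}), a $2\times 2$ matrix $N$ is symplectic if and only if $\det N = 1$, so the competitor matrices $N$ in the definition of strong stability range exactly over $SL(2,\mathbb{R})$. For any such $N$, property (M1) of Lemma \ref{le2.1} gives characteristic polynomial $\mu^2 - ({\rm tr}\,N)\,\mu + 1$, whose discriminant is $({\rm tr}\,N)^2 - 4$. Hence $N$ is elliptic, i.e. $\sigma(N)\subset\mathbb{S}^1\setminus\{\pm1\}$, precisely when $|{\rm tr}\,N| < 2$; parabolic when $|{\rm tr}\,N| = 2$; and hyperbolic when $|{\rm tr}\,N| > 2$. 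Thus the assertion to be proved becomes: system \eqref{2.1} is strongly stable if and only if $|{\rm tr}\,M(T)| < 2$.

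For the forward implication I would argue by continuity. If $|{\rm tr}\,M(T)| < 2$, then since $N \mapsto {\rm tr}\,N$ is continuous there exists $\varepsilon > 0$ such that every symplectic $N$ with $\lVert M(T) - N\rVert < \varepsilon$ still satisfies $|{\rm tr}\,N| < 2$, hence is elliptic. Its eigenvalues are then two distinct complex conjugate points of $\mathbb{S}^1$, so $N$ is diagonalizable with spectrum in $\mathbb{S}^1$; by Lemma \ref{le2.2} the sequence $\{N^k\}_{k\in\mathbb{Z}}$ is bounded. This is exactly strong stability.

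The converse is the delicate point. Suppose $\sigma(M(T)) \not\subset \mathbb{S}^1\setminus\{\pm1\}$, i.e. $|{\rm tr}\,M(T)| \geq 2$. If $|{\rm tr}\,M(T)| > 2$ then $M(T)$ is already hyperbolic, so it has a real eigenvalue of modulus greater than $1$ and is itself unstable by Lemma \ref{le2.2}; taking $N = M(T)$ shows \eqref{2.1} is not strongly stable. The remaining, and genuinely delicate, case is $|{\rm tr}\,M(T)| = 2$, where $M(T)$ may well be stable (for instance $M(T) = \pm I$), so one must exhibit \emph{unstable} symplectic matrices arbitrarily close to it. Here I would use the explicit one-parameter family
$$ N_\delta = M(T)\begin{pmatrix} e^\delta & 0 \\ 0 & e^{-\delta}\end{pmatrix}, \qquad \delta\in\mathbb{R}, $$
which lies in $SL(2,\mathbb{R})$ (as $\det N_\delta = \det M(T) = 1$), reduces to $M(T)$ at $\delta = 0$, and converges to $M(T)$ as $\delta\to 0$. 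Writing $M(T) = \begin{pmatrix} a & b \\ c & d\end{pmatrix}$ with $a + d = {\rm tr}\,M(T) = \pm 2$, a direct computation gives ${\rm tr}\,N_\delta = a e^\delta + d e^{-\delta}$.

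It then remains to check that this family enters the hyperbolic regime for small $\delta\neq 0$. If $a\neq d$, then $\tfrac{d}{d\delta}{\rm tr}\,N_\delta\big|_{\delta=0} = a - d \neq 0$, so for a suitable small sign of $\delta$ one has $|{\rm tr}\,N_\delta| > 2$. If instead $a = d$, then necessarily $a = d = \pm 1$ and ${\rm tr}\,N_\delta = \pm 2\cosh\delta$, whose modulus exceeds $2$ for every $\delta\neq 0$. In all cases, arbitrarily close to $M(T)$ one finds symplectic matrices $N_\delta$ with $|{\rm tr}\,N_\delta| > 2$, hence hyperbolic and (again by Lemma \ref{le2.2}) unstable, so \eqref{2.1} is not strongly stable. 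The main obstacle is precisely this parabolic subcase, in which the matrix itself can be stable and one must perturb it into the hyperbolic region; the explicit family above has the virtue of handling all parabolic matrices uniformly, without separating the $\pm I$ case from the non-diagonalizable one.
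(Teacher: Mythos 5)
Your proof is correct, and it implements both halves differently from the paper. The paper's argument leans on Lemma \ref{le2.2}: strong stability implies stability, hence $M(T)$ is diagonalizable with unimodular spectrum, so the only parabolic candidates left to exclude are $M(T)=\pm I$; these are destabilized by the shear $N=\begin{pmatrix}1&\varepsilon/2\\0&1\end{pmatrix}$, which is close to $I$ but non-diagonalizable, hence has unbounded powers. You instead reduce everything to the trace criterion ($|{\rm tr}\,N|<2$ iff elliptic) and destabilize \emph{every} parabolic matrix, diagonalizable or not, with the multiplicative family $N_\delta=M(T)\,{\rm diag}(e^\delta,e^{-\delta})$, which exits into the hyperbolic region rather than staying on the parabolic surface as the paper's shear does; your computation ${\rm tr}\,N_\delta=(a+d)\cosh\delta+(a-d)\sinh\delta$ indeed exceeds $2$ in modulus for a suitable small $\delta\neq0$ in all cases. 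Your elliptic-implies-strongly-stable half is also more elementary: openness of ellipticity follows from continuity of the trace, a polynomial in the matrix entries, whereas the paper invokes continuity of the eigenvalues of a nearby symplectic matrix (the argument quoted from Ekeland). What the paper's route buys is brevity, since Lemma \ref{le2.2} instantly collapses the parabolic case to $\pm I$; what yours buys is uniformity (no case split on diagonalizability of $M(T)$) and a fully explicit perturbation. One cosmetic remark: your citations of Lemma \ref{le2.2} for an arbitrary symplectic $N$ (rather than a monodromy matrix) are really appeals to the underlying linear-algebra facts --- diagonalizable with unimodular spectrum implies bounded powers, and a real eigenvalue of modulus greater than $1$ implies unbounded powers --- which is exactly what the paper itself uses when it declares its shear matrix, and nearby hyperbolic matrices, unstable.
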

\begin{proof}
Clearly, if $M(T)$ is strongly stable, then it is stable and hence, by Lemma \ref{le2.2}, it is diagonalizable and its spectrum lies in $\mathbb{S}^1$. Now we notice that if $M(T)$ is diagonalizable and has double eigenvalue 1 (resp. $-1$) then $M(T)=I$ (resp. $M(T)=-I$): we claim that in these cases $M(T)$ is not strongly stable. Indeed, assuming $M(T)=I$ (the case $-I$ is analogous) the matrix 
\[
N=
\begin{pmatrix}
1 &\; \varepsilon/2 \\
0 &\; 1 \\
\end{pmatrix},
\]
satisfies (in the $\infty$-norm) $\Vert M(T) - N \Vert < \varepsilon$; however, $N$ is not diagonalizable and, thus, not stable. 
\par
Conversely, if the eigenvalues $\mu_1$, $\mu_2$ of $M(T)$ belong to $\mathbb{S}^1\setminus\{\pm 1\}$, they are simple and $\mu_1=\overbar{\mu_2}$. Hence, if $N$ is a symplectic matrix which is close enough to $N$, it has eigenvalues $\lambda_1$, $\lambda_2$ close to $\mu_1$ and $\mu_2$, respectively (cf. the argument in \cite[page 7]{Ek}). This implies that $\lambda_1$ and $\lambda_2$ cannot be real and thus, since $N$ is symplectic,  they are complex, with $\lambda_1 = \overbar{\lambda_2} \neq \pm 1$. Therefore $N$ is stable.
\end{proof}

\noindent Notice that Lemma \ref{le2.3} can be generalized to the case of higher dimensional Hamiltonian systems in a sense involving the so-called Krein sign of the $M(T)$-eigenvalues  (see \cite[Th. 10, Ch. 1, Sec. 2]{Ek}). 
\par 
By Lemmas \ref{le2.2} and \ref{le2.3}, and observing (as in the proof of Lemma \ref{le2.3}) that a parabolic matrix $A$ is diagonalizable if, and only if, $A = \pm I$, the following result thus holds true.

\begin{corollary}\label{corstab}
Let us consider the linear Hamiltonian system \eqref{2.1}. Then:
\begin{itemize}
\item if \eqref{2.1} is hyperbolic, then it is unstable,
\item if \eqref{2.1} is parabolic, then it is stable when $M(T) = \pm I$ and unstable otherwise,
\item if \eqref{2.1} is elliptic, then it is strongly stable.  
\end{itemize}
\end{corollary}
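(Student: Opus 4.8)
The plan is to combine the two stability lemmas (Lemma \ref{le2.2} and Lemma \ref{le2.3}) with the trichotomy of Floquet multipliers established in (M1) of Lemma \ref{le2.1}. Since every monodromy matrix $M(T)$ is symplectic (Remark \ref{remsymp}), its spectrum falls into exactly one of the three classes: hyperbolic, parabolic, or elliptic. I would address each class in turn, reading off stability from the position and multiplicity of the eigenvalues.

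\textbf{Hyperbolic case.} Here $\mu_1 = 1/\mu_2 \in \mathbb{R}\setminus\{\pm 1\}$, so one multiplier has modulus strictly greater than $1$; equivalently, the spectrum does not lie in $\mathbb{S}^1$. By Lemma \ref{le2.2}, stability fails, so the system is unstable. This is immediate and requires no further computation.

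\textbf{Elliptic case.} Here $\mu_1 = \overbar{\mu_2} \in \mathbb{S}^1\setminus\{\pm 1\}$, which is precisely the hypothesis of Lemma \ref{le2.3}; hence the system is strongly stable. Again this is a direct citation.

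\textbf{Parabolic case.} This is the only case requiring genuine argument, so I expect it to be the main (mild) obstacle. Here $\mu_1 = \mu_2 = \pm 1$, so the spectrum lies in $\mathbb{S}^1$, and by Lemma \ref{le2.2} stability is equivalent to $M(T)$ being diagonalizable. The key observation, flagged in the statement just before the corollary, is that a parabolic matrix $A$ (one with a double eigenvalue $\pm 1$) is diagonalizable if and only if $A = \pm I$: indeed, a diagonalizable matrix with a single repeated eigenvalue $\lambda$ must equal $\lambda I$, and conversely $\pm I$ is trivially diagonal. Therefore, when $M(T) = \pm I$ the system is stable by Lemma \ref{le2.2}, and when $M(T) \neq \pm I$ the matrix is non-diagonalizable, so the same lemma yields instability. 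This exhausts the three mutually exclusive classes and proves the corollary.
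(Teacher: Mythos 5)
Your proposal is correct and follows essentially the same route as the paper: the paper derives the corollary directly from Lemmas \ref{le2.2} and \ref{le2.3} together with the very observation you highlight, namely that a parabolic symplectic matrix is diagonalizable if, and only if, it equals $\pm I$. Your case-by-case reading of the Floquet multiplier trichotomy from (M1) matches the paper's intended argument exactly, including the only nontrivial step (the parabolic case).
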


\subsubsection{The Conley-Zehnder index}\label{sec2.3}
\label{Sec2-1-4}
Denoting by 
\[
Sp(1)=\{A\in\mc{M}_2(\R)\,:\,A^TJA=J\}=\{A\in\mc{M}_2(\R)\,:\, \det A=1\}
\]
the symplectic group, where $\mc{M}_2(\R)$ is the set of two by two real matrices, the Conley-Zehnder index of a path of symplectic matrices is a topological invariant that associates to each path in the set
\[
\Gamma:=\{\gamma:[0,T]\rightarrow Sp(1)\;\hbox{continuous}\,:\,\gamma(0)=I\;\hbox{and}\,1\,\hbox{is not an eigenvalue of}\,\gamma(T) \}
\]  
an integer number. A brief description of how to define the Conley-Zehnder index is the following one, cf. \cite[Sec. 2 and 3]{Abb}. 
\par
At first, we recall that every invertible matrix $A$ admits the so-called polar decomposition, i.e., $A$ can be written as $A=PO$, where $P$ is symmetric and positive definite and $O$ is orthogonal. If $A\in Sp(1)$, then $P$ belongs to the group of symmetric, positive definite and symplectic matrices, which is homeomorphic to the plane, while $O$ belongs to the group of rotations $SO(2)$, which is homeomorphic to the unit circumference $\mathbb{S}^1$. Hence, since the polar decomposition is continuous in $A$, the group $Sp(1)$ is homeomorphic to $\mathbb{R}^2\times\mathbb{S}^1$, an explicit covering projection $\Phi: \mathbb{R}^+ \times \mathbb{R} \times \mathbb{R} \to Sp(1)$ being given by
\begin{equation}\label{defcovering}
\Phi(\tau,\sigma,\vartheta) = P(\tau,\sigma)O(\vartheta),
\end{equation}
where
$$
P(\tau,\sigma) = \begin{pmatrix}
\cosh \tau + \sinh \tau \cos \sigma & \sinh\tau \sin\sigma \\
\sinh \tau \sin \sigma &\cosh\tau - \sinh\tau \cos\sigma \\
\end{pmatrix}, \qquad 
O(\vartheta) = \begin{pmatrix}
\cos \vartheta & -\sin\vartheta \\
\sin \vartheta &\cos\vartheta \\
\end{pmatrix}.
$$
Actually, since the plane is homeomorphic to the open disc, we can more conveniently visualize $Sp(1)$ as the interior of a solid torus in $\mathbb{R}^3$. 
\par
Now, since by Lemma \ref{le2.1} and Remark \ref{remsymp} any $A\in Sp(1)$ is either elliptic, parabolic or hyperbolic, a so-called rotation function $\varrho: Sp(1) \to \mathbb{S}^1$ can be defined as follows:
\begin{equation}\label{defrotfunction}
\varrho(A) = \frac{\lambda}{\vert \lambda \vert},
\end{equation}
where $\lambda$ is any eigenvalue of $A$ in the case $\lambda \in \mathbb{R}$ (that is, $\varrho(A) = \pm 1$ depending on whether the eigenvalues are both positive or both negative), and it is the Krein-positive eigenvalue of $A$ in the case $\lambda \in \mathbb{S}^1 \setminus \{\pm1\}$, that is, $\langle i J \zeta, \zeta \rangle_{\mathbb{C}^2} > 0$, where $\zeta \in \mathbb{C}^2$ is the eigenvector corresponding to $\lambda$. The name rotation function is justified by the fact that $\varrho$ is homotopic to the map $A = P(\tau,\sigma)O(\vartheta) \in Sp(1) \mapsto e^{i\vartheta}$ and actually reduces to it when $A = O(\vartheta)$, see \cite{Abb} for some details.
\par 
Based on this notion of rotation, the Conley-Zehnder index of $\gamma \in \Gamma$ is defined as follows. First, we take a path $\widetilde{\gamma}:[0,T+1]\rightarrow Sp(1)$ extending $\gamma$ (i.e., $\widetilde{\gamma}|_{[0,T]}=\gamma$) such that 
\begin{align*}
&\bullet 1 \text{ is not an eigenvalue of $\widetilde{\gamma}(t)$} \, \text{for all}\; t\in[T,T+1],\\
&\bullet\widetilde{\gamma}(T+1)\in\{N_1,N_2\},
\end{align*}
where $N_1$ (resp. $N_2$) is an arbitrary symplectic matrix with real negative (resp. positive) eigenvalues.
Second, we consider the unique function $\delta: [0,T+1] \to \mathbb{R}$ such that
\begin{equation}
\label{2.17}
e^{i\pi \delta(t)}=\varrho(\widetilde\gamma(t)), \qquad \delta(0)=0.
\end{equation}
The \textit{Conley-Zehnder index} (at time $T$) of $\gamma$ is then defined as
$$
i_T = \delta(T+1).
$$
Notice that $i_T$ is an integer number, since $\varrho(A) = \pm 1$ if $A$ is a matrix with real eigenvalues.
The Conley-Zehnder index is thus a sort of algebraic count of the rotations of the path $\gamma$ in $Sp(1)$; however, the fact of considering an extending path $\widetilde\gamma$ as above has some subtleties.
\par
\begin{figure}[h!]
\centering
\includegraphics[scale=0.8]{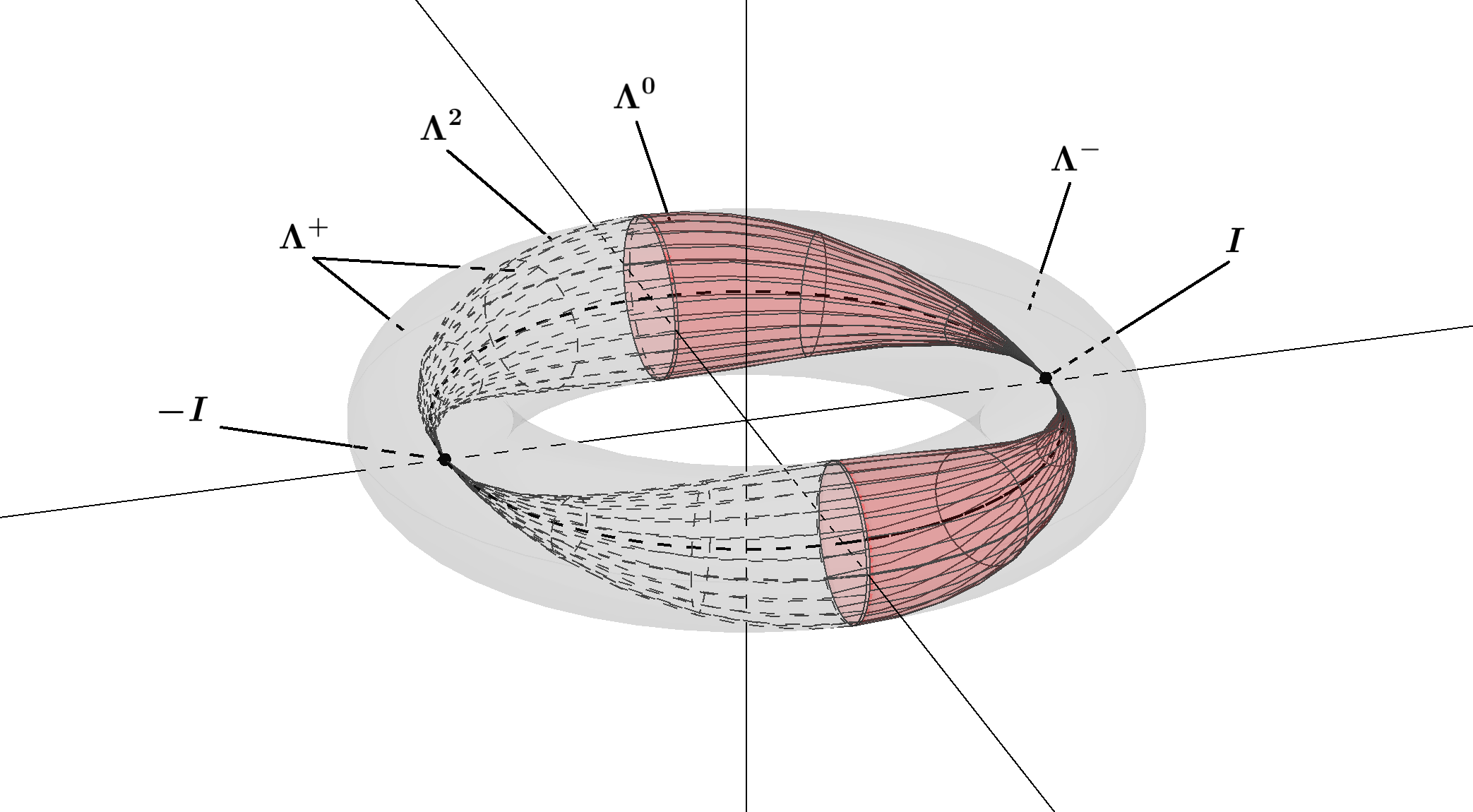}
\caption{\small This figure is a representation of the symplectic group $Sp(1)$ as the interior of a solid torus. We can distinguish different surfaces and open sets on it. At first, the red surface $\Lambda^0$ represents the so-called \emph{resonant surface}, i.e, the surface made by the symplectic matrices which have eigenvalues $\mu_1=\mu_2=1$. In particular it contains the identity matrix $I$, which is the singular point of $\Lambda^0$. This surface is the key for the definition of the Conley-Zehnder index given above, since it disconnects $Sp(1)$ in the open sets $\Lambda^-$ and $\Lambda^+$, allowing us to count the rotations of a path of symplectic matrices. Precisely, $\Lambda^-$, which is the open set in the right hand side out of $\Lambda^0$, is the set of symplectic matrices having positive real eigenvalues different from $1$; $\Lambda^2$ is the dotted surface symmetric to $\Lambda^0$ (including $-I$ as the singular point) and thus it is made by the matrices having eigenvalues $\mu_1=\mu_2=-1$ (the name $\Lambda^2$ is chosen to emphasize that this is a $2T$-resonant surface). The open set enclosed by $\Lambda^0\cup\Lambda^2$ contains the matrices in $Sp(1)$ with complex eigenvalues, while the open set in the left hand side out of $\Lambda^2$ includes the symplectic matrices having negative real eigenvalues different from $-1$. Hence, $\Lambda^+=Sp(1)\setminus\{\Lambda^0\cup\,\Lambda^-\}$ contains all the matrices in $Sp(1)$ having complex or negative real eigenvalues. 
Notice that both $\Lambda^-$ and $\Lambda^+$ are contractible in $Sp(1)$. Finally, the horizontal circumference $\mathbb{S}^1$ passing trough $I$ and $-I$, drawn with a dashed line, includes all the symplectic matrices which are rotation matrices.}
\label{Fig1}
\end{figure}
To get a better geometrical insight of the definition, it is convenient to define
\begin{align*}
\Lambda^-&= \{A\in Sp(1)\,:\,\det(I-A)<0\} = \{A\in Sp(1)\,:\,\l_1,\l_2\in\R^+\setminus\{1\}\},\\
\Lambda^0&= \{A\in Sp(1)\,:\,\det(I-A)=0\} = \{A\in Sp(1)\,:\,\l_1=\l_2=1\}\\
\Lambda^+&= \{A\in Sp(1)\,:\,\det(I-A)>0\} = \{A\in Sp(1)\,:\,\l_1,\l_2\in\R^-\cup(\mathbb{S}^1\setminus\{1\})\},
\end{align*}
where the equalities easily follow from the relation 
$$
2-\det(I-A) =  \textrm{tr}(A) = 
\begin{cases}
\lambda + 1/\lambda & \text{ if  } \lambda,1/\lambda \text{ are the real eigenvalues of } A \\
2\,\text{Re}(\lambda) & \text{ if } \lambda, \bar\lambda \text{ are the complex eigenvalues of } A,
\end{cases}
$$
valid for any $A \in Sp(1)$. A visual representation of the above sets is given in Figure \ref{Fig1}; in particular, we observe that
the surface $\Lambda_0$ disconnects $Sp(1)$ in the two connected open subsets 
$\Lambda^-$ and $\Lambda^+$, both contractible in $Sp(1)$. Hence, the extending path $\widetilde\gamma$ is such that
$\widetilde\gamma(t) \in \Lambda^-$ (resp., $\Lambda^+$) for every $t \in [T,T+1]$ if $\gamma(T) \in \Lambda^-$ (resp., $\Lambda^+$); since these sets are contractible in $Sp(1)$, the definition of Conley-Zehnder index is well posed. Incidentally, let us also notice that from this discussion it also easily follows that
$$
i_T\in 2\Z \Longleftrightarrow \gamma(T)\in \Lambda^- \qquad\hbox{ and }\qquad
i_T\in 2\Z+1 \Longleftrightarrow  \gamma(T)\in \Lambda^+.
$$  
Now, let us take $N_1$ (resp. $N_2$) a symplectic matrix with negative (resp. positive) eigenvalues and $\vartheta=\pi$ (resp. $\vartheta=0$) in the parameterization of $Sp(1)$ given by \eqref{defcovering}. With this choice, and recalling the definition of the rotation function $\varrho$, it is not difficult to convince oneself that $i_T$ is nothing but the algebraic count of the half-windings of the extended path 
$\widetilde\gamma$ in the symplectic group.
\par
As pointed out in \cite{MRZ}, using the above description of $Sp(1)$ it is also possible to give an even simpler characterization of the Conley-Zehnder index, avoiding at all the extending path $\widetilde\gamma$. Precisely, by \cite[Formulas 18 and 19]{MRZ} it holds that
\begin{equation}\label{formulaMRZ1}
i_T = 2\ell \Longleftrightarrow 
\begin{cases}
\displaystyle \gamma(T) \in \Lambda^- \quad \vspace{0.2cm}\\ 
\displaystyle 2\ell \pi - \frac{\pi}{2} < \vartheta(T) < 2\ell\pi + \frac{\pi}{2}, 
\end{cases}
\end{equation}
and
\begin{equation}\label{formulaMRZ2}
i_T = 2\ell+1 \Longleftrightarrow
\begin{cases}
\displaystyle \gamma(T) \in \Lambda^+ \vspace{0.2cm}\\  
2\ell \pi < \vartheta(T) < 2(\ell +1) \pi.
\end{cases}
\end{equation}
We will make use of these formulas in Section \ref{sec3}. 
\par 
Let us now consider the linear Hamiltonian system \eqref{2.1}.
By \eqref{2.3}, the fundamental matrix $M(t)$ belongs to $Sp(1)$ for all $t\in[0,T]$ and we can thus define the Conley-Zehnder index (at time $T$) of \eqref{2.1} as the Conley-Zehnder index $i_T$ of $M(t)$. Notice, however, that in principle we need to assume that system \eqref{2.1} is $T$-nonresonant, in order to ensure that $M(t) \in \Gamma$. In the next section, we will give a definition of the Conley-Zehnder index for a possibly resonant system: as pointed out in \cite{Abb-book}, ``the extensions to degenerate paths are somehow arbitrary'' and different extensions could be particularly appropriate for specific purposes. In fact, our definition of the Conley-Zehnder index for a resonant system will be the natural from the point of view of the rotation behavior of its solutions, see Remark \ref{remdefres} and Section \ref{secMorse}.

\subsubsection{The rotational behavior of the solutions: a classification}\label{sec2.4}

In order to analyze the winding number of the solutions of \eqref{2.1} solutions we use clockwise polar coordinates.
By the linearity of the system, it is enough to assume $z_0 = e^{-i\omega}$ and we thus write
\[
z(t;e^{-i\omega})=(z_1(t;e^{-i\omega}),z_2(t;e^{-i\omega}))=r(t;\omega)(\cos\t(t;\o),-\sin\t(t;\o))=r(t;\omega)e^{-i\t(t;\o)},
\]
that is, $r(t;\omega)>0$ and $\t(t;\o)$ are, respectively, the radial and the angular component of the \eqref{2.1} solution with initial value $z_0=e^{-i\o}$. Without loss of generality, we take $\t(0;\o) = \o$. 
\par
We thus define the (clockwise) \emph{winding number} (at time $T$) of the solution starting at $e^{-i\omega}$
as
\begin{equation}
\label{2.9}
\eta_T(\o)=\frac{\t(T;\o)-\o}{2\pi}.
\end{equation}
The next lemma states three fundamental properties of $\eta_T$.
\begin{lemma}
\label{le2.iii}
The next three properties hold.
\begin{enumerate}
\item[($W1$)] $\eta_T(\o)\in\mathbb{Z}$ (resp. $\eta_T(\o)\in\mathbb{Z}+1/2$) if, and only if, $e^{-i\o}$ is an eigenvector of $M(T)$ with a positive (resp. negative) eigenvalue. 
\item[($W2$)] The solution $z(t;e^{-i\o})$ of \eqref{2.1} is $T$-periodic if, and only if, 
\[
\eta_T(w)\in\mathbb{Z}\quad\hbox{and}\quad \eta_T'(\o)=0.
\]
and $T$-antiperiodic if, and only if, 
\[
\eta_T(w)\in\mathbb{Z}+ \frac{1}{2} \quad\hbox{and}\quad \eta_T'(\o)=0.
\]
\item[($W3$)] The function $\eta_T(\o)$ has period $\pi$. 
\end{enumerate}
\end{lemma}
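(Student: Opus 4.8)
The plan is to reduce all three properties to the polar description of the endpoint map $\omega \mapsto z(T;e^{-i\omega}) = M(T)e^{-i\omega}$. Since a nontrivial solution never vanishes, smooth dependence on initial data together with continuous lifting of the argument makes $\theta(T;\omega)$ a well-defined $C^1$ function of $\omega$, normalized by $\theta(0;\omega)=\omega$ and $r(0;\omega)=1$; this is what gives $\eta_T$ and $\eta_T'$ meaning. Everything then follows by reading off the geometry of $\theta(T;\omega)$, plus one computation of the angular derivative.

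For $(W1)$ I would simply observe that $\eta_T(\omega)\in\mathbb{Z}$ means $\theta(T;\omega)=\omega+2\pi k$, i.e. $e^{-i\theta(T;\omega)}=e^{-i\omega}$, so $M(T)e^{-i\omega}=r(T;\omega)e^{-i\omega}$ with $r(T;\omega)>0$; this is precisely the statement that $e^{-i\omega}$ is an eigenvector of $M(T)$ with positive eigenvalue $r(T;\omega)$, and every step reverses. Likewise $\eta_T(\omega)\in\mathbb{Z}+\tfrac12$ means $\theta(T;\omega)=\omega+(2k+1)\pi$, hence $M(T)e^{-i\omega}=-r(T;\omega)e^{-i\omega}$, an eigenvector with negative eigenvalue.

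The core of the lemma is the derivative identity $\partial_\omega\theta(T;\omega)=r(T;\omega)^{-2}$, which drives $(W2)$. I would prove it by differentiating $w(\omega):=M(T)e^{-i\omega}$ directly. Writing $e^{-i\omega}=(\cos\omega,-\sin\omega)$ one checks $\tfrac{d}{d\omega}e^{-i\omega}=-Je^{-i\omega}$, so $w_\omega=-M(T)Je^{-i\omega}$. With $w=(P,Q)$, the clockwise angular speed is $\partial_\omega\theta(T;\omega)=(QP_\omega-PQ_\omega)/(P^2+Q^2)$. Setting $a\wedge b:=a_1b_2-a_2b_1$, the numerator equals $-(w\wedge w_\omega)=(M(T)e^{-i\omega})\wedge(M(T)Je^{-i\omega})=\det M(T)\,\bigl(e^{-i\omega}\wedge Je^{-i\omega}\bigr)=1$, using $e^{-i\omega}\wedge Je^{-i\omega}=\cos^2\omega+\sin^2\omega=1$ and $\det M(T)=1$ from Lemma \ref{le2.1}. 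Hence $\partial_\omega\theta(T;\omega)=1/(P^2+Q^2)=r(T;\omega)^{-2}$ and therefore $\eta_T'(\omega)=\tfrac{1}{2\pi}\bigl(r(T;\omega)^{-2}-1\bigr)$, so $\eta_T'(\omega)=0$ if and only if $r(T;\omega)=1$. Combining with $(W1)$: the solution is $T$-periodic exactly when $M(T)e^{-i\omega}=e^{-i\omega}$, i.e. $e^{-i\omega}$ is an eigenvector with eigenvalue precisely $+1$, which splits into ``positive eigenvalue'' ($\eta_T(\omega)\in\mathbb{Z}$, by $(W1)$) together with ``the eigenvalue equals $1$'' ($r(T;\omega)=1$, i.e. $\eta_T'(\omega)=0$); the $T$-antiperiodic case is identical with eigenvalue $-1$ and $\eta_T(\omega)\in\mathbb{Z}+\tfrac12$.

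Finally, $(W3)$ follows from linearity: since $e^{-i(\omega+\pi)}=-e^{-i\omega}$ and $z(t;-e^{-i\omega})=-z(t;e^{-i\omega})$, the two solutions have the same radius and angles differing by $\pi$, and the normalization at $t=0$ forces $\theta(t;\omega+\pi)=\theta(t;\omega)+\pi$; evaluating at $t=T$ gives $\eta_T(\omega+\pi)=\eta_T(\omega)$. The only genuine computation is the angular-speed identity used in $(W2)$; parts $(W1)$ and $(W3)$ are bookkeeping, and the identity itself is a one-line consequence of $\det M(T)=1$ once the endpoint map is differentiated correctly. I expect the only point requiring care to be the justification that $\theta(T;\cdot)$ is a legitimately differentiable lift, which is where the nonvanishing of nontrivial solutions and continuous dependence on initial data are invoked.
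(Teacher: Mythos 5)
Your proof is correct, and its core step --- the identity $\partial_\omega \theta(T;\omega) = r(T;\omega)^{-2}$ --- is obtained by a genuinely different route from the paper's. The paper works dynamically, in the time variable: it writes the angular ODE \eqref{2.11} for $\theta(t;\omega)$, differentiates it with respect to $\omega$, observes that the resulting linear equation \eqref{2.12} for $\partial\theta/\partial\omega$ has the same coefficient as the logarithmic derivative $-2r'/r$ in \eqref{2.13}, and integrates in $t$ to get $\partial\theta/\partial\omega = 1/r^2$ along the whole trajectory (its \eqref{2.14}). You instead differentiate the endpoint map $\omega \mapsto M(T)e^{-i\omega}$ directly in $\omega$ and reduce everything to $\det M(T) = 1$ via the identity $(Aa)\wedge(Ab) = \det A\,(a \wedge b)$; your sign bookkeeping ($\tfrac{d}{d\omega}e^{-i\omega} = -Je^{-i\omega}$, numerator $= -(w \wedge w_\omega) = 1$) checks out. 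This is shorter, purely algebraic, and makes transparent that the key identity is nothing but the infinitesimal expression of the area-preservation of the Poincar\'e map: it needs only $\det M(T)=1$ from Lemma \ref{le2.1} and would apply verbatim to any determinant-one matrix. What the paper's route buys in exchange is the angular equation \eqref{2.11} itself, valid for all $t$, which is machinery reused later (in its proof of (W3), in Lemma \ref{le3.1}, and in the Lotka--Volterra example), whereas your derivation yields the identity only at the fixed endpoint --- which is all that (W2) requires. The remaining parts differ only cosmetically: for (W3) the paper invokes the $\pi$-periodicity of the right-hand side of \eqref{2.11} in the angular variable plus uniqueness, while you use oddness of the flow, $z(t;-z_0) = -z(t;z_0)$, together with the normalization $\theta(0;\omega)=\omega$ to pin down the lift; (W1) is the same one-line computation in both. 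Your closing caveat about justifying that $\theta(T;\cdot)$ is a differentiable lift is equally implicit in the paper's argument, so it is not a gap relative to the original.
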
	
\begin{proof}
Since 
$$
M(T) e^{-i\o} = r(T;\o)e^{-i\t(T;\o)} = r(T;\o) e^{-2\pi i \eta_T(\o)} e^{-i\o},
$$
property (W1) follows plainly.
In order to prove (W2), we first observe that the solution $z(t;e^{-i\o})$ of \eqref{2.1} is $T$-periodic (resp., $T$-antiperiodic) if, and only if, $\eta_T(\o)\in\Z$ (resp., $\eta_T(\o) \in \mathbb{Z} + 1/2$) and $r(T;\omega)=1$. Then, we should show that the condition on the radial component is equivalent to $\eta_T'(\o)=0$. First, the symmetric matrix $S(t)$ of \eqref{2.1} can be written as  
\begin{equation*}
S(t) = 
\begin{pmatrix}
a(t) &\; b(t) \\
b(t) &\; c(t) \\
\end{pmatrix}
\end{equation*}
for $a(t)$, $b(t)$, $c(t)\in\mathcal{C}([0,T];\R)$. 
Rewriting \eqref{2.1} in coordinates 
\begin{equation*}
\left\{
\begin{array}{ll}
z_1'&=b(t)z_1+c(t)z_2\\
z_2'&=-a(t)z_1-b(t)z_2	
\end{array}
\right.	
\end{equation*}	
it follows that $\t(t;\o)$ solves 
\begin{equation}
\begin{split}
\label{2.11}
\t'(t;\o)&=\frac{\langle Jz',z\rangle}{|z|^2}=\frac{\langle S(t)z,z\rangle}{|z|^2}=\frac{a(t)z_1^2+c(t)z_2^2+2b(t)z_1z_2}{r^2(t;e^{i\o})}\\[2pt]
&=a(t)\cos^2\t(t;\o)+c(t)\sin^2\t(t;\o)-2b(t)\cos\t(t;\o)\sin\t(t;\o).
\end{split}
\end{equation}
In order to lighten the notation we denote $\t(t;\o)=\t$ and $r(t;e^{-i\o})= r$. Now, differentiating $\eqref{2.11}$ with respect to $\o$,
\begin{equation}
\label{2.12}
\left( \frac{\partial\t}{\partial\o}\right)' =2[\sin\t\cos\t(-a(t)+c(t))-(\cos^2\t-\sin^2\t)b(t)]\frac{\partial\t}{\partial\o}.
\end{equation}
In addition, since $r^2= z_1^2+z_2^2$, the equation
\begin{equation}
\label{2.13}
-2\frac{r'}{r}=2[\sin\t\cos\t(-a(t)+c(t))-(\cos^2\t-\sin^2\t)b(t)]
\end{equation}
holds and, thus, joining \eqref{2.12} and \eqref{2.13}, we get
\begin{equation*}
\left( \frac{\partial\t}{\partial\o}\right)' =	-2\frac{r'}{r}\frac{\partial\t}{\partial\o}\,.
\end{equation*}
Since $\frac{\partial\t}{\partial\o}(0) = 1$, we thus find
\begin{equation}
\label{2.14}
\frac{\partial\t}{\partial\o}=\frac{1}{r^2}\qquad\hbox{for all}\;\,t\in[0,T].
\end{equation}
Thus, it follows by \eqref{2.14} that
\[
\eta_T'(\o)=\frac{1}{2\pi}\left(\frac{1}{r^2(T;e^{-i\o})}-1\right).
\]
Therefore, $\eta_T'(\o)=0$ if, and only if,  $r(T;e^{-i\o})=1=r(0;e^{-i\o})$. This concludes the proof of (W2).

As for (W3) we observe that, since the right-hand side of equation \eqref{2.11} is $\pi$-periodic in the variable $\omega$, it holds
\begin{equation}\label{thetaomega}
\theta(t,\o + \pi j) = \theta(t,\o) + \pi j, \quad \text{ for every } t,\o \in \mathbb{R}, \, j \in \mathbb{Z}.
\end{equation}
Hence, 
\[
\eta_T(\o+\pi)=\frac{\t(T;\o+\pi)-(\o+\pi)}{2\pi}=\frac{\t(T;\o)+\pi-(\o+\pi)}{2\pi}=\eta_T(\o),\] 
as desired.
\end{proof}
\noindent Taking into account (W3) of the previous lemma, we now define 
\begin{equation}
\label{minmax}
\eta_T^-=\min_{\o\in[0,\pi)}\eta_T(\o)\quad\hbox{and}\quad\eta_T^+=\max_{\o\in[0,\pi)}\eta_T(\o). 
\end{equation}
Based on Lemma \ref{le2.iii}, the next lemma
provides a classification for system \eqref{2.1} in terms of the winding number $\eta_T$ of the solutions.   

\begin{proposition}
\label{prop2.1}
There exists $\ell \in \Z$ such that the winding number $\eta_T$ associated with system \eqref{2.1} satisfies one, and only one, of the following conditions:
\begin{itemize}
\item[(1)]  $\ell-1/2<\eta_T^-\leq\ell\leq\eta_T^+<\ell+1/2$; precisely
\begin{align*}
&(1.\textsc{a})\;\,\ell-1/2<\eta_T^{-}<\ell<\eta_T^+<\ell+1/2,\quad &&(1.\textsc{b})\;\,\ell-1/2<\eta_T^-<\eta_T^+=\ell,\\
&(1.\textsc{c})\;\,\ell=\eta_T^-<\eta_T^+<\ell+1/2,\quad &&(1.\textsc{d})\;\,\ell=\eta_T^-=\eta_T^+.
\end{align*}
\item[(2)] $\ell<\eta_T^-\leq\eta_T^+<\ell+1$; precisely
\begin{align*}
&(2.\textsc{a})\;\,\ell<\eta_T^-<\ell+1/2<\eta_T^+<\ell+1,\quad
&&(2.\textsc{b})\;\,\ell<\eta_T^-<\eta_T^+=\ell+1/2,\\
&(2.\textsc{c})\;\,\ell+1/2=\eta_T^-<\eta_T^+<\ell+1,\quad
&&(2.\textsc{d})\;\,\ell+1/2=\eta_T^-=\eta_T^+,\\
&(2.\textsc{e})\;\,\ell<\eta_T^-\leq\eta_T^+<\ell+1/2,\quad
&&(2.\textsc{f})\;\,\ell+1/2<\eta_T^-\leq\eta_T^+<\ell+1.
\end{align*}
\end{itemize}
Moreover:
\begin{itemize}
\item system \eqref{2.1} admits nontrivial $T$-periodic solutions if, and only if, we are in the cases (1.\textsc{b}), (1.\textsc{c}) or (1.\textsc{d}),  
\item system \eqref{2.1} admits nontrivial $T$-antiperiodic solutions if, and only if, we are in the cases (2.\textsc{b}), (2.\textsc{c}) or (2.\textsc{d}).  
\end{itemize}
\end{proposition}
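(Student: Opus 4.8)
The whole statement rests on a single a priori estimate: the oscillation of the winding number is strictly smaller than a half, i.e.
\[
0 \le \eta_T^+ - \eta_T^- < \tfrac12 ,
\]
and I would establish this at the outset. The plan is to compare the solutions issuing from $e^{-i\omega_1}$ and $e^{-i\omega_2}$ for $0 \le \omega_1 < \omega_2 < \pi$. Their initial data are linearly independent, and since $\det M(t) = 1$ (see \eqref{2.3}) the matrix $M(t)$ is invertible, so the two solutions stay linearly independent for every $t$; equivalently, their clockwise angles $\theta(t;\omega_1)$ and $\theta(t;\omega_2)$ never differ by an integer multiple of $\pi$. As the difference $\theta(t;\omega_2) - \theta(t;\omega_1)$ equals $\omega_2 - \omega_1 \in (0,\pi)$ at $t = 0$ and depends continuously on $t$, it is confined to the open interval $(0,\pi)$ for all $t$, in particular at $t = T$. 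Recalling the definition \eqref{2.9}, this gives $\eta_T(\omega_2) - \eta_T(\omega_1) \in (-\tfrac12,\tfrac12)$ for every such pair; taking the maximum and the minimum over $[0,\pi)$ — both attained, as $\eta_T$ is continuous and $\pi$-periodic by (W3) — yields the estimate.

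Granting this bound, the dichotomy is essentially combinatorial, and this is the routine part. The range of $\eta_T$ is the closed interval $[\eta_T^-, \eta_T^+]$, whose length is less than $\tfrac12$; since consecutive points of $\tfrac12\Z$ lie exactly $\tfrac12$ apart, this interval can contain at most one of them. I would then distinguish two possibilities. If the range contains an integer $\ell$ (necessarily unique), the length bound forces $\ell - \tfrac12 < \eta_T^- \le \ell \le \eta_T^+ < \ell + \tfrac12$, which is alternative (1); if it contains no integer, the range is squeezed strictly between two consecutive integers $\ell$ and $\ell+1$, which is alternative (2). The finer subcases (1.\textsc{a})--(1.\textsc{d}) and (2.\textsc{a})--(2.\textsc{f}) are then read off by comparing $\eta_T^-$ and $\eta_T^+$ with the single relevant half-integer — $\ell$ in alternative (1), $\ell + \tfrac12$ in alternative (2) — exhaustiveness and mutual exclusivity being checked by inspection.

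Finally, to match the subcases with the existence of $T$-periodic and $T$-antiperiodic solutions I would use (W2) together with Lemma \ref{le2.1}. For the direct implications, in (1.\textsc{b}), (1.\textsc{c}) and (1.\textsc{d}) the integer value $\ell$ is attained at an extremum of the differentiable $\pi$-periodic function $\eta_T$, where $\eta_T' = 0$; by (W2) the corresponding solution is $T$-periodic, and symmetrically $\ell + \tfrac12$ produces a $T$-antiperiodic solution in (2.\textsc{b}), (2.\textsc{c}) and (2.\textsc{d}). The hard part is the converse: excluding a $T$-periodic solution in (1.\textsc{a}) (and a $T$-antiperiodic one in (2.\textsc{a}), (2.\textsc{e}), (2.\textsc{f})). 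Here I would reason through the monodromy matrix. By (M2) a nontrivial $T$-periodic solution forces $\mu_1 = \mu_2 = 1$; if $M(T) = I$ then $\eta_T \equiv \ell$, which is (1.\textsc{d}), whereas if $M(T)$ is a nontrivial Jordan block it has a single eigendirection modulo $\pi$ and no negative real eigenvalue, so $\eta_T$ hits the value $\ell$ at exactly one $\omega_0$ and, having no further point of $\tfrac12\Z$ in its range, keeps $\eta_T - \ell$ of constant sign away from $\omega_0$. Thus $\ell$ is a strict extremum and we are in (1.\textsc{b}) or (1.\textsc{c}), never (1.\textsc{a}). The antiperiodic statement follows verbatim, with the eigenvalue $-1$ and the half-integer $\ell + \tfrac12$ in place of $\ell$.
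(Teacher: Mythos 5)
Your proposal is correct, but the heart of it --- the classification into cases (1) and (2) --- runs along a genuinely different route than the paper's. The paper never isolates your a priori oscillation bound $\eta_T^+-\eta_T^-<\tfrac12$; instead it argues spectrally: by (W1) of Lemma \ref{le2.iii} every half-integer value of $\eta_T$ corresponds to an eigendirection of $M(T)$, so (once the constant cases are set aside) $\eta_T$ can take values in $\tfrac12\Z$ at no more than two points of $[0,\pi)$, and the classification follows from a case analysis on that number $\kappa\in\{0,1,2\}$, the mixed case ``one integer and one strictly half-integer value'' being killed because $\det M(T)=1$ forces real eigenvalues to have the same sign (M1 of Lemma \ref{le2.1}). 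Your non-crossing argument (linear independence preserved by the invertible $M(t)$, hence the angle difference of two solutions confined to $(0,\pi)$) compresses all of this into one quantitative lemma, after which the dichotomy is pure combinatorics on an interval of length $<\tfrac12$. What your route buys is a strictly stronger and reusable estimate --- the tables of Theorem \ref{th2.2} only record oscillation $<1$ in the hyperbolic cases, whereas your bound shows it is always $<\tfrac12$, in the spirit of the Herman-type inequality \eqref{rotglobal} invoked later --- together with a cleaner exhaustiveness check. What the paper's route buys is that the spectral bookkeeping (which eigenvalue is attained at which extremal direction) falls out of the proof for free, and that is exactly the input needed for the eigenvalue and stability columns of Theorem \ref{th2.2}. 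For the periodic/antiperiodic equivalences the two proofs essentially coincide: your ``Jordan block $\Rightarrow$ unique eigendirection $\Rightarrow$ strict extremum'' exclusion of (1.\textsc{a}) is a contrapositive rephrasing of the paper's contradiction argument (resonance with two eigendirections at level $\ell$ $\Rightarrow$ every solution $T$-periodic $\Rightarrow$ $\eta_T\equiv\ell$), and both rest on (W2) and (M2).
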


\begin{proof}
Since we are in $\mathbb{R}^2$, the dimension of the eigenspace associated to an eigenvalue of $M(T)$ is always one or two. Then, assuming that $\eta_T$ is non constantly equal to $\ell$ or $\ell+1/2$ (i.e., we are not in the cases (1.\textsc{d}) and (2.\textsc{d})), by (W1) in Lemma \ref{le2.iii}, it follows that
\begin{equation*}
\kappa:={\rm {card}}\{\eta_T([0,\pi))\cap[\Z\cup(\Z+1/2)]\}\leq 2. 
\end{equation*}
If $\kappa=0$, we are in the cases (2.\textsc{e}) or (2.\textsc{f}). If $\kappa=1$, by (W3) in Lemma \ref{le2.iii}, there exist $\t_1\in[0,\pi)$ and $\ell\in\Z$ such that 
\[
\eta_T(\t_1)=\ell\in\{\eta_T^-,\eta_T^+\}\quad\hbox{or}\quad \eta_T(\t_1)=\ell+1/2\in\{\eta_T^-,\eta_T^+\}.
\]
Indeed, if $\eta_T(\t_1)$ is not a minimum or a maximum, by the $\pi$-periodicity of $\eta_T$ there exists another $\theta_2\in[0,\pi)$ such that $\eta(\t_2)\in\{\ell,\ell+1/2\}$, contradicting the fact that $\kappa=1$. Thus, if $\kappa=1$, we are in the cases (1.\textsc{b}), (1.\textsc{c}), (2.\textsc{b}) or (2.\textsc{c}). Finally, if $\kappa=2$, again by the periodicity of $\eta_T$, there exist $\t_1,\t_2\in[0,\pi)$, with $\t_1 \neq t_2$, such that 
one of the following three alternatives holds:
\begin{equation}
\label{2.15}
\eta_T(\t_1)=\eta_T(\t_2)\in\Z\qquad \mbox{ or } \qquad  
\eta_T(\t_1)=\eta_T(\t_2)\in\Z+ \frac{1}{2},
\end{equation}
or
\begin{equation}
\label{2.16}
\eta_T(\t_1)\in\Z\quad\hbox{and}\quad\eta_T(\t_2)\in\Z+\frac{1}{2}.
\end{equation}
In the two cases of \eqref{2.15} we are in (1.\textsc{a}) or (2.\textsc{a}). If  \eqref{2.16} holds, by (W1) in Lemma \ref{le2.iii} the monodromy matrix $M(T)$ has a positive and a negative eigenvalue, which cannot happen by (M1) in Lemma \ref{le2.1}. This concludes the proof of the first claim of the lemma. 
\par 
By (W2) in Lemma \ref{le2.2}, cases (2.\textsc{e}) and (2.\textsc{f}) do not admit nontrivial $T$-periodic or $T$-antiperiodic solutions since $\eta_T(\omega)\notin\Z/2$ for all $\o\in[0,\pi)$. In the cases (1.\textsc{d}) (resp. (2.\textsc{d})) all the solutions are $T$-periodic (resp. $T$-antiperiodc). On the other hand, 
as already observed in the first part of the proof in the cases (1.\textsc{b}) and (1.\textsc{c}) (resp. (2.\textsc{b}) and (2.\textsc{c})) there exists $\t_1\in[0,\pi)$ (resp. $\t_2\in[0,\pi)$) such that 
\[
\eta_T(\t_1)\in\Z\quad\hbox{and}\quad\eta_T'(\t_1)=0\qquad (\hbox{resp.}\;\, \eta_T(\t_2)\in\Z+1/2\quad\hbox{and}\quad\eta_T'(\t_2)=0 ),
\]
which implies, by (W2), the existence of a $T$-periodic solution (resp. a $T$-antiperiodc solution). Finally, we analyze the cases (1.\textsc{a}) and (2.\textsc{a}). We will prove that they are $T$-nonresonant and $2T$-nonresonant, respectively. In the case (1.\textsc{a}) there exist $\t_1,\t_2\in[0,\pi)$, with $\t_1\neq\t_2$, and $\ell\in\Z$ such that $\eta_T(\t_1)=\ell=\eta_T(\t_2)$. Then, by (W1) in Lemma \ref{le2.iii} the vectors $e^{-i\t_1}$ and $e^{-i\t_2}$ are both eigenvectors of the monodromy matrix $M(T)$ with an associated positive eigenvalue. If system \eqref{2.1} is $T$-resonant, then the only eigenvalue is $1$, and so $z(t;e^{-i\t_1})$ and $z(t;e^{-i\t_2})$ are eigenvectors associated with the eigenvalue $1$, that is, they are $T$-periodic solutions. These $T$-periodic solutions are linearly independent and have the same winding number $\eta_T=\ell$. Therefore, every solution is $T$-periodic, so that by (W2) in Lemma \ref{le2.iii} 
$\eta_T$ must be constantly equal to $\ell$, which is a contradiction. Then the case (1.\textsc{a}) is $T$-nonresonant. With an analogous proof the case (2.\textsc{a}) is $2T$-nonresonant. This ends the proof. 
\end{proof}
\noindent Note that, thanks to Proposition \ref{prop2.1}, we can sharpen (W3) of Lemma \ref{le2.iii} by saying that $\eta_T$ has minimal period $\pi$ in the cases (1.\textsc{a}), (1.\textsc{b}), (1.\textsc{c}) and (2.\textsc{a}), (2.\textsc{b}), (2.\textsc{c}). Indeed, if $\eta_T$ has a smaller period in these cases, then the number of intersections at levels $\ell\in\Z/2$ do not correspond with the number $\kappa$ shown in the proof of Proposition \ref{prop2.1}.

\subsection{Rotation, Conley-Zehnder index and stability: the relations}\label{sec2.5}

Based on the material collected in the previous sections, we now describe, for the linear Hamiltonian system \eqref{2.1}, the relations between the winding number $\eta_T$ of the solutions, the Conley-Zehnder index $i_T$ and the stability of the system. 
\par
In the next theorem we denote $\R^+_{*}=\{x\in\R\setminus\{1\}\,:\,x>0\}$, $\R^-_{*}=\{x\in\R\setminus\{-1\}\,:\,x<0\}$
and $\mathbb{S}^1_* = \mathbb{S}^1 \setminus \{\pm 1\}$. Moreover, the letters \emph{h, p, e} appearing on the first columns of the tables stand for hyperbolic, parabolic and elliptic, respectively. Finally, the subscript \emph{r}, when present, denotes that the system is $T$-resonant, while the superscripts $\{+,-,*\}$ act as labels to
distinguish among the possible cases (being motivated by the behavior of the winding number). 

\begin{theorem}
\label{th2.2}
Let $\ell \in \Z$. With reference to the classification provided by Proposition \ref{prop2.1}, the following hold true: 

\begin{center}
\begin{tabular}{| l | c | c | l | l |}
\hline
\multicolumn{5}{|c|}{Case 1: $\ell-1/2<\eta_T^-\leq\ell\leq\eta_T^+<\ell+1/2$} \\ \hline &
$\eta_T$ & $i_T$  & Eigenvalues & Stability \\ \hline
$h$ & $\ell-1/2<\eta_T^{-}<\ell<\eta_T^+<\ell+1/2$ & $2\ell$ & $\mu_1=1/\mu_2\in\mathbb{R}^{+}_*$ & Unstable \\ \hline
$p^-_r$ & $\ell-1/2<\eta_T^-<\eta_T^+=\ell$ & $2\ell$ & $\mu_1=\mu_2=1$ & Unstable\\ \hline $p^+_r$ &
$\ell=\eta_T^-<\eta_T^+<\ell+1/2$ & $2\ell$  & $\mu_1=\mu_2=1$ & Unstable \\ \hline $p^*_r$ &
$\ell=\eta_T^-=\eta_T^+$ & $2\ell$ & $\mu_1=\mu_2=1$ & Stable \\ \hline
\end{tabular}
\end{center} 

\begin{center}
\begin{tabular}{| l | c | c | l | l |}
\hline
\multicolumn{5}{|c|}{Case 2: $\ell<\eta_T^-\leq\eta_T^+<\ell+1$} \\ \hline
& $\eta_T$ & $i_T$ & Eigenvalues & Stability  \\ \hline
$h$ & $\ell<\eta_T^-<\ell+1/2<\eta_T^+<\ell+1$ & $2\ell+1$ & $\mu_1=1/\mu_2\in\mathbb{R}^{-}_*$ & Unstable \\ \hline
$p^-$ & $\ell<\eta_T^-<\eta_T^+=\ell+1/2$ & $2\ell+1$ & $\mu_1=\mu_2=-1$ & Unstable \\ \hline
$p^+$ & $\ell+1/2=\eta_T^-<\eta_T^+<\ell+1$ & $2\ell+1$ & $\mu_1=\mu_2=-1$ & Unstable \\ \hline
$p^*$ & $\ell+1/2=\eta_T^-=\eta_T^+$ & $2\ell+1$ & $\mu_1=\mu_2=-1$ & Stable \\ \hline
$e^-$ & $\ell<\eta_T^-\leq\eta_T^+<\ell+1/2$ & $2\ell+1$ & $\mu_1=\overbar{\mu_2}\in\mathbb{S}^1_*$ & Strong Stable \\ \hline
$e^+$ & $\ell+1/2<\eta_T^-\leq\eta_T^+<\ell+1$ & $2\ell+1$ & $\mu_1=\overbar{\mu_2}\in\mathbb{S}^1_*$  & Strong Stable \\ \hline
\end{tabular}
\end{center}
\end{theorem}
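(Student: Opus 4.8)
The plan is to run through the eight types singled out in Proposition~\ref{prop2.1} one at a time and, for each, fill in the remaining three columns (Conley--Zehnder index, Floquet multipliers, stability). These columns are essentially independent and rest on different tools: the multipliers come from combining (W1) of Lemma~\ref{le2.iii} with (M1) of Lemma~\ref{le2.1}; the stability is then immediate from Corollary~\ref{corstab}; and the index is obtained from the characterisations \eqref{formulaMRZ1}--\eqref{formulaMRZ2} in the nonresonant rows and from the resonant definition (Remark~\ref{remdefres}) in the parabolic ones.

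For the eigenvalue and stability columns I would argue as follows. By (W1) a direction $e^{-i\o}$ with $\eta_T(\o)\in\Z$ (resp. $\eta_T(\o)\in\Z+1/2$) is precisely an eigenvector of $M(T)$ with a positive (resp. negative) multiplier, so the number $\kappa$ of levels in $\Z\cup(\Z+1/2)$ met by $\eta_T$ on $[0,\pi)$, already computed in the proof of Proposition~\ref{prop2.1}, governs the multiplier type. When $\kappa=2$ (rows $h$) the two distinct eigendirections carry multipliers of the same sign whose product is $1$ by (M1); they cannot coincide, since equal multipliers would force $M(T)=\pm I$ and hence $\eta_T$ constant, so $\mu_1=1/\mu_2\in\R^{+}_{*}$ in Case~1 and $\in\R^{-}_{*}$ in Case~2, i.e. the system is hyperbolic and unstable. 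When $\kappa=1$ (rows $p^{\pm}_r$, $p^{\pm}$) the single real eigendirection carries, by the periodicity assertion of Proposition~\ref{prop2.1}, a $T$-periodic or $T$-antiperiodic solution, whence $\mu_1=\mu_2=\pm1$; here $M(T)\neq\pm I$ because $\eta_T$ is non-constant, so the system is parabolic and unstable. The rows $p^{*}_r$, $p^{*}$ are exactly the constant-$\eta_T$ cases $M(T)=\pm I$, which are parabolic and stable, whereas $\kappa=0$ (rows $e^{\pm}$) leaves no real eigendirection, forcing complex multipliers on $\mathbb{S}^1_*$ and, by Corollary~\ref{corstab}, strong stability.

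The index column is the delicate point, and here the tool is \eqref{formulaMRZ1}--\eqref{formulaMRZ2}, which read $i_T$ off the position of $\gamma(T)=M(T)$ relative to $\Lambda^{\pm}$ (now known from the multipliers) together with the terminal value $\vartheta(T)$ of the rotation angle in the covering \eqref{defcovering}. The link I would establish is that this angle is pinned by the winding numbers, namely
\[
\frac{\vartheta(T)}{2\pi}\in[\eta_T^-,\eta_T^+].
\]
Writing $M(t)=P(t)O(\vartheta(t))$ with $\vartheta(0)=0$, the solution $M(t)e^{-i\o}=P(t)\,O(\vartheta(t))e^{-i\o}$ has clockwise angle differing from $\o+\vartheta(t)$ (up to the orientation fixed in \eqref{defrotfunction}) only by the distortion produced by the symmetric positive definite factor $P(t)$; since $\langle P(t)w,w\rangle>0$ this distortion stays in $(-\pi/2,\pi/2)$ for all $t$ and vanishes exactly when $O(\vartheta(t))e^{-i\o}$ points along an eigendirection of $P(t)$. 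Choosing $\o$ so that this occurs at $t=T$ produces a solution whose winding equals $\vartheta(T)/2\pi$, giving the displayed inclusion. Combined with the $\Lambda^{\pm}$ membership and with the fact that $[\eta_T^-,\eta_T^+]$ is confined to $(\ell-1/2,\ell+1/2)$ in Case~1 and to $(\ell,\ell+1)$ in Case~2, this forces the band of $\vartheta(T)$ selected in \eqref{formulaMRZ1}--\eqref{formulaMRZ2} to be the one with label $\ell$, yielding $i_T=2\ell$ in Case~1 and $i_T=2\ell+1$ in Case~2. For the parabolic rows one has $\gamma(T)\in\Lambda^0\cup\Lambda^2$ and \eqref{formulaMRZ1}--\eqref{formulaMRZ2} do not apply; there I would instead invoke the resonant definition (Remark~\ref{remdefres}), tailored so that $i_T$ equals the value $2\ell$ (resp. $2\ell+1$) of the neighbouring rows, consistently with $\eta_T$ attaining the level $\ell$ (resp. $\ell+1/2$).

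I expect the genuine obstacle to be exactly this last link between $\vartheta(T)$ and $[\eta_T^-,\eta_T^+]$: one must fix once and for all the orientation relating the clockwise polar angle of the solutions to the covering \eqref{defcovering} and to the rotation function \eqref{defrotfunction} (these carry opposite orientations, so a sign has to be tracked carefully), and then verify that in each case the width of the winding band, together with the $\pi/2$ distortion bound, is sharp enough to isolate a single value of $\ell$. The remaining verifications --- that the listed strict and non-strict inequalities for $\eta_T^{\pm}$ correspond to the announced multiplier classes, and that the rows $p^{*}_r$, $p^{*}$ are exactly $M(T)=\pm I$ --- are routine once (W1)--(W3) and Corollary~\ref{corstab} are available.
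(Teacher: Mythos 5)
Your treatment of the eigenvalue and stability columns is sound and coincides with the paper's: (W1) of Lemma~\ref{le2.iii} plus (M1) of Lemma~\ref{le2.1} and the classification of Proposition~\ref{prop2.1} pin down the multipliers, and Corollary~\ref{corstab}, together with the observation that $M(T)=\pm I$ exactly in the rows $p^*_r$, $p^*$, gives the stability. For the index column you take a genuinely different route from the paper, which at this point simply cites \cite[Lemma 4]{MRZ}: you derive the value of $i_T$ from \eqref{formulaMRZ1}--\eqref{formulaMRZ2} via the angular-distortion bound for the positive definite factor $P(t)$ in the polar decomposition. This device is legitimate (it is in fact the same trick the paper uses later, in the proof of Theorem~\ref{mequalrho}, choosing $\omega$ so that $O(\vartheta(T))e^{-i\omega}$ is an eigenvector of $P(T)$), and the estimate closes: the distortion $\epsilon(t)$ is continuous, vanishes at $t=0$, and never meets $\pm\pi/2$ modulo $2\pi$, so it stays in $(-\pi/2,\pi/2)$; combined with the confinement of $[\eta_T^-,\eta_T^+]$ to $(\ell-1/2,\ell+1/2)$, resp.\ $(\ell,\ell+1)$, and with the membership in $\Lambda^-$, resp.\ $\Lambda^+$, it isolates the band with label $\ell$. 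Modulo the orientation bookkeeping that you yourself flag, this part is correct and more self-contained than the paper's proof.

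There is, however, one genuine error: your handling of the parabolic rows of Case~2. You assert that for all parabolic rows $\gamma(T)\in\Lambda^0\cup\Lambda^2$, that \eqref{formulaMRZ1}--\eqref{formulaMRZ2} ``do not apply'', and that $i_T$ is then fixed by the resonant convention of Remark~\ref{remdefres}. This is false for $(2,p^-)$, $(2,p^+)$, $(2,p^*)$: there $\mu_1=\mu_2=-1$, so $1$ is \emph{not} an eigenvalue of $M(T)$, the path $M(t)$ lies in $\Gamma$, and the Conley--Zehnder index is the classical, well-defined one; moreover $\det(I-M(T))=4>0$, so $M(T)\in\Lambda^2\subset\Lambda^+$ and \eqref{formulaMRZ2} does apply. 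For these three rows the equality $i_T=2\ell+1$ is a statement to be proved, not a definition --- the convention of Remark~\ref{remdefres} covers only the $T$-resonant rows $(1,p^-_r)$, $(1,p^+_r)$, $(1,p^*_r)$, where $M(T)\in\Lambda^0$ --- so as written your proof leaves the theorem unestablished precisely there. The gap is easily repaired with your own machinery: the eigendirection with eigenvalue $-1$ has winding number $\eta_T=\ell+1/2$, hence the distortion bound gives $\vert \vartheta(T)-(2\ell\pi+\pi)\vert<\pi/2$, so $\vartheta(T)\in(2\ell\pi,2(\ell+1)\pi)$, and \eqref{formulaMRZ2} yields $i_T=2\ell+1$; but this step must be carried out, not stipulated.
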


\begin{remark}\label{remdefres}
\rm In the $T$-resonant cases, the value of the Conley-Zehnder index $i_T = 2\ell$ has to be meant as a definition. As already remarked at the end of Section \ref{sec2.4}, this definition seems to be very natural from the point of view of the winding number $\eta_T$ of the solutions (cf. \cite[Rem. 2]{GiMa}), from at least two points of view. First, the $T$-resonant cases $(1,p^-_r)$, $(1,p^+_r)$ and $(1,p^*_r)$
turn out to be completely analogous to the $2T$-resonant ones $(2,p^-)$, $(2,p^+)$ and $(2,p^*)$. Second, all the cases with
$i_T = 2\ell$ (that is, all the cases in the first table) behave in the same way when giving integer \emph{strict} lower/upper bounds for the winding number $\eta_T$: indeed, in all the situations the best available information is that $\ell - 1 < \eta_T^- \leq \eta^+_T < \ell +1$.
This will be essential in the application of the Poincar\'e-Birkhoff fixed point theorem described in Section \ref{sec4}. For a comparison between our definition of $i_T$ with the classical notion of Morse index in the case of a scalar equation, see Section \ref{secMorse}.
\end{remark}

\begin{proof}
At first, we recall that the fact that the above cases are the only possible ones is precisely the content of Proposition \ref{prop2.1}.

Now, we focus on the relation between the winding number $\eta_T$ and the eigenvalues of $M(T)$. In view of the first property stated in Lemma \ref{le2.iii}, a real positive eigenvalue (resp., a real negative eigenvalue) of $M(T)$ exists if, and only if, the graph of the function $\eta_T$ intersects the horizontal line $\{(x,\ell) \}_{x \in \mathbb{R}}$ (resp., $\{(x,\ell + 1/2) \}_{x \in \mathbb{R}}$). Moreover, by Proposition \ref{prop2.1} nontrivial $T$-periodic solutions exist in the cases $(1,p^-_r)$, $(1,p^+_r)$ and $(1,p^*_r)$ and do not exist otherwise. Hence, it must be $\mu_1 = \mu_2 = 1$ in the cases $(1,p^-_r)$, $(1,p^+_r)$ and $(1,p^*_r)$
and $\mu_1 = 1/\mu_2 \in \mathbb{R}^+_*$ in the case $(1,h)$. The argument for the cases $(2,h)$, $(2,p^-)$, $(2,p^+)$ and $(2,p^*)$ is analogous, using again Proposition \ref{prop2.1}. Finally, in the cases $(2,e^-)$ and $(2,e^+)$ the eigenvalues must belong to $\mathbb{S}^1_*$ since the function $\eta_T$ does not take values in $\mathbb{Z}/2$ and thus real eigenvalues of $M(T)$ do not exist. 

Next, the relation between eigenvalues and stability is an almost direct consequence of Corollary \ref{corstab}. 
The only thing to be observed is that in the parabolic cases $(1,p^-_r)$ and $(1,p^+_r)$ the matrix $M(T)$ is not the identity 
(since otherwise $\eta_T$ would be constant) while $M(T) = I$ in the case $(1,p^*_r)$ (since, by (W2) of Lemma \ref{le2.iii} all the solutions of \ref{prop2.1} are $T$-periodic). An analogous argument is valid in the case $(2,p^-)$, $(2,p^+)$ and $(2,p^*)$.

Finally, the relation between the winding number $\eta_T$ and the Conley-Zehnder index $i_T$ is a consequence of \cite[Lemma 4]{MRZ} (see also \cite{GiMa}). We stress once more that the equality $i_T = 2\ell$ in the resonant cases $(1,p^-_r)$, $(1,p^+_r)$ and $(1,p^*_r)$ has to be intended as a definition. 
\end{proof}

\subsubsection{The Morse index of scalar second order equations}\label{secMorse}

In order to better understand the definition of the Conley-Zehnder index in the resonant case, we deal here with the scalar second order equation
\begin{equation}\label{eqhill}
u'' + q(t) u = 0, \qquad u \in \mathbb{R},
\end{equation}
where $q: \mathbb{R} \to \mathbb{R}$ is a continuous and $T$-periodic function. As well known, the above equation can be written as a linear planar Hamiltonian system like \eqref{2.1} by setting
$$
u' = v, \qquad v' = -q(t) u,
$$
that is, $z = (u,v) \in \mathbb{R}^2$ and $S(t) = \begin{pmatrix}
q(t) &\; 0 \\
0 &\; 1 \\
\end{pmatrix}$. Accordingly, we enter in the setting described in the previous sections and we can define the winding number $\eta_T$ of the solutions as well as the Conley-Zehnder index $i_T$. 

On the other hand, the notion of Morse index for an equation like \eqref{eqhill} is also often considered. To introduce it, we need to embed \eqref{eqhill} into the one-parameter family of equations 
\begin{equation}\label{eqhillauto}
u'' + (\lambda + q(t)) u = 0,
\end{equation}
where $\lambda$ has the role of eigenvalue. The spectral theory for the corresponding $T$-periodic problem is well known: precisely (see, for instance, \cite{MW}) the eigenvalues of \eqref{eqhillauto} can be written as
$$
\lambda_0 < \lambda_1 \leq \lambda_2 < \lambda_3 \leq \lambda_4 < \cdots < \lambda_{2\ell-1} \leq \lambda_{2\ell} \to +\infty,
$$
where each eigenvalue is meant to be simple (that is, that the associated eigenspace has dimension $1$) with corresponding eigenfunction having winding number equal to $\ell$.
With this in mind, the \emph{Morse index} $\mathfrak{m}_T$ of equation \eqref{eqhill} is defined as the number of (strictly) negative eigenvalues, that is
(with the convention $\lambda_{-1} = -\infty$)
$$
\mathfrak{m}_T = j \Longleftrightarrow \lambda_{j-1} < 0 \leq \lambda_{j}\,.
$$
Moreover, the \emph{augmented Morse index} $\mathfrak{m}^+_T$ is defined as the number of non-positive eigenvalues, that is
$$
\mathfrak{m}^+_T = j \Longleftrightarrow \lambda_{j-1} \leq 0 < \lambda_{j}\,.
$$
Notice that $\mathfrak{m}_T = \mathfrak{m}_T^+$ whenever equation \eqref{eqhill} is non-resonant (that is, $\lambda_j \neq 0$ for every $j$);
on the other hand $\mathfrak{m}^+_T-\mathfrak{m}_T$ is the dimension of the associated eigenspace when
$\lambda_j = 0$ for some $j$ (recall that such a dimension can be one or two, depending on whether one or two null eigenvalues exist).

Using the rotational characterization of the eigenvalues of \eqref{eqhillauto} (as presented, for instance, in \cite{GaZh00}, and based on the relation between rotation number and winding number, see Corollary \ref{corrot}), it can be proved that
$$
\mathfrak{m}_T = \mathfrak{m}^+_T = i_T = \left\{
\begin{array}{ll}
2\ell\;\;&\hbox{if}\;\;\lambda_{2\ell-1} < 0 < \lambda_{2\ell}\,,\\[2pt]
2\ell+1\;\;&\hbox{if}\;\;\lambda_{2\ell} < 0 < \lambda_{2\ell+1}\,,
\end{array} 
\right. 	
$$
see for instance \cite[Th. 3.2]{MaReTo14}. Hence, the Morse index coincides with (the augmented Morse index and with) the Conley-Zehnder index in the non-resonant case (this equality is also known as ``index theorem'' and actually holds true in any dimension, see \cite[Sec. 3.4.1]{Abb-book}).
On the other hand, in the resonant cases we have
$$
\left\{
\begin{array}{ll}
2\ell - 1 = \mathfrak{m}_T < 2\ell = \mathfrak{m}_T^+ = i_T\;\;&\hbox{if}\;\;\lambda_{2\ell-1} = 0 < \lambda_{2\ell}\,,\\[2pt]
2\ell  = \mathfrak{m}_T = i_T < 2\ell +1 = \mathfrak{m}_T^+\;\;&\hbox{if}\;\;\lambda_{2\ell-1} < 0 = \lambda_{2\ell}\,,\\[2pt]
2\ell - 1 = \mathfrak{m}_T < 2\ell = i_T < 2\ell + 1= \mathfrak{m}_T^+\quad\;\;&\hbox{if}\;\; \lambda_{2\ell-1} = 0 = \lambda_{2\ell}\,.
\end{array} 
\right. 	
$$
Thus, it always holds that
 $$
 \mathfrak{m}_T \leq i_T \leq \mathfrak{m}^+_T
 $$
but at least one of the inequality is strict when equation \eqref{eqhill} is resonant. See also Figure \ref{Fig2} for a graphical explanation.
\begin{figure}[h!]
\includegraphics[scale=0.44]{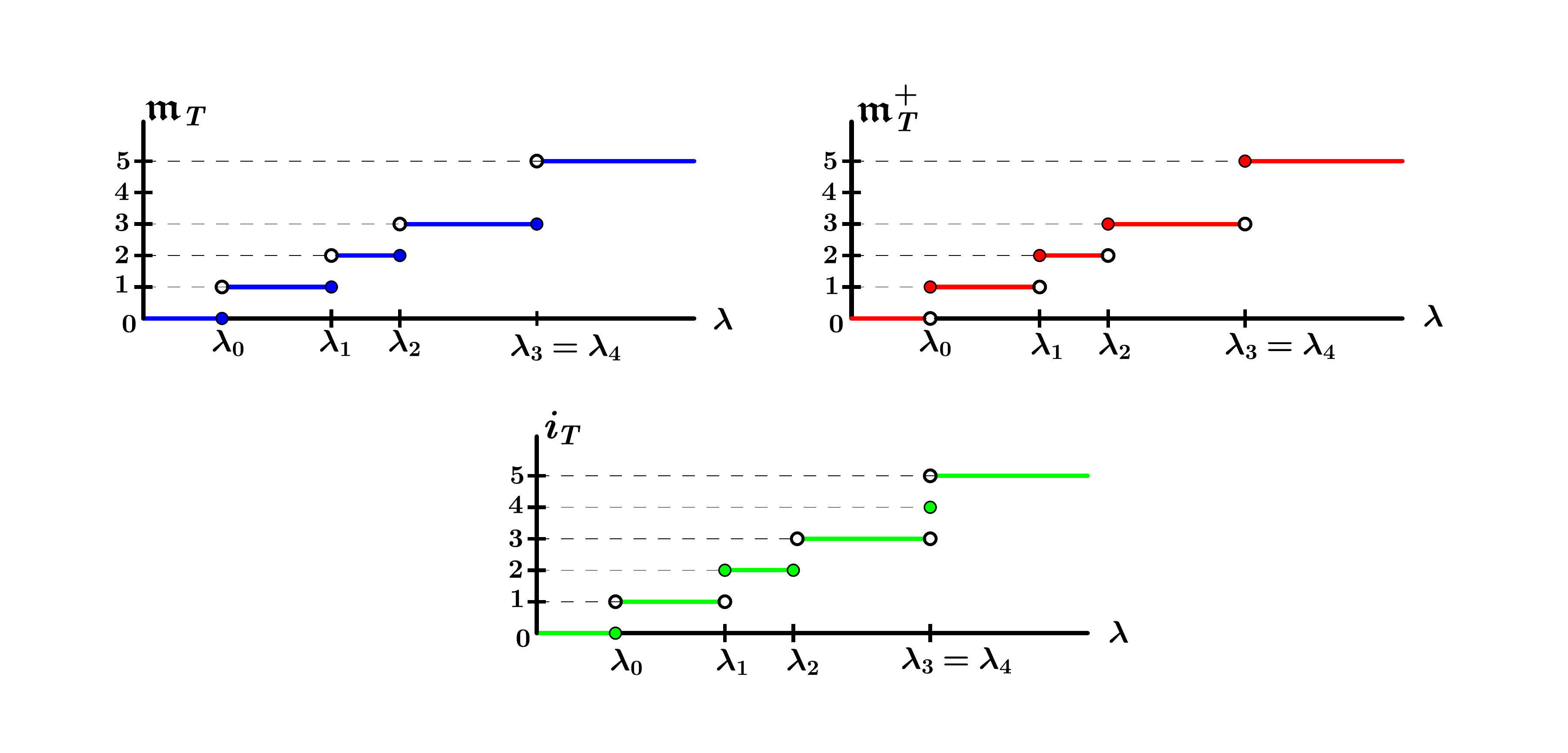}
\caption{\small For a continuous and $T$-periodic potential $q(t)$, the figure represents the graphs, as functions of $\lambda \in \mathbb{R}$, of the Morse index (in blue), of the augmented Morse index (in red) and of the Conley-Zehnder index (in green) of the parameter dependent equation $u'' + (\lambda + q(t)) u = 0$. On the $\lambda$-axis, we find the eigenvalues of the linear operator $u \mapsto -u'' - q(t)u$; notice that the Morse index (resp., the augmented Morse index) of $u'' + (\lambda + q(t)) u = 0$ is then the number of eigenvalues less than $\lambda$ (resp., less or equal to $\lambda$). The Conley-Zehnder index, on the other hand, is defined from the rotational properties of the solutions, as in this section. To illustrate the different behavior of the three indices, we have assumed that the eigenvalues $\lambda_1$ and $\lambda_2$ are simple and that the eigenvalue $\lambda_3 = \lambda_4$ is double (recall that the principal eigenvalue $\lambda_0$ is always simple). 
It is clear that the Conley-Zehnder index coincides with the Morse index or with the augmented Morse index when $\lambda$ is a simple eigenvalue (precisely, it coincides with the augmented Morse index if $\lambda$ is the left extreme of an hyperbolicity interval and with the Morse index if $\lambda$ is the right extreme) while it is different from both of them if $\lambda$ is a double eigenvalue. Notice that the Conley-Zehnder index is neither lower semi-continuous nor upper semi-continuous (while, as well known, the Morse index is lower semi-continuous and the augmented Morse index is upper semi-continuous).}
\label{Fig2}
\end{figure}

As already mentioned, our definition of $i_T$ seems to be very natural from the point of view of the rotation of solutions. Other definitions of $i_T$ in the resonant cases could be given, being more appropriate from other points of view: for instance, in \cite{Abb} the definition is made in such a way that the equality $\mathfrak{m}_T = i_T$ holds true also in the resonant case, see \cite[Sec. 1.3.7 and 3.4.1]{Abb-book}.

\section{Asymptotic indices and iteration formula}\label{sec3}

Motivated by the applications to the existence of subharmonic solutions discussed in Section \ref{sec4}, in this section 
we consider the linear Hamiltonian system \eqref{2.1} on the time interval $[0,kT]$, with $k \geq 1$ an integer number. 
Precisely, in Section \ref{sec3.1} we first provide some iteration formulas for the Conley-Zehnder index $i_{kT}$ of system \eqref{2.1} at time $kT$. Based on this, following \cite{Abb} in Section \ref{sec3.2} we recall the definition of mean Conley-Zehnder index for system \eqref{2.1} and we then compare this notion with the more classical one, of dynamical nature, of rotation number (that is, the mean winding number for the solutions to system \eqref{2.1}).

\subsection{Iteration formulas for the Conley-Zehnder index}\label{sec3.1}

Let $k \geq 1$ be an integer number and let $i_{kT}$ be the Conley-Zehnder index of system \eqref{2.1} at time $kT$, that is, the index for the sympectic path $M(t)$ on the time $[0,kT]$, with $M(t)$ the fundamental matrix of \eqref{2.1}.
\par
We start by considering the cases when system \eqref{2.1} is, at time $T$, hyperbolic or parabolic.  
In this situation, a sharp formula for $i_{kT}$ can be given, by simply taking advantage of the relation between Conley-Zehnder index, winding number and eigenvalues of the monodromy matrix stated in Theorem \ref{th2.2}. In order to give the complete prove, we need the following simple preliminary lemma, where the function $\eta_{kT}(\omega)$ is defined, similarly as in \eqref{2.9}, as the winding number on the interval $[0,kT]$ of the solution starting, at $t = 0$, at $e^{-i\omega}$.

\begin{lemma}
\label{le3.1}
For every integer $k \geq 1$, the next two properties are satisfied.
\item[($W4$)] If $\eta_T(\o)=\ell$ for some $\ell\in\Z/2$, then $\eta_{kT}(\o)=k\ell$,
\item[($W5$)] Given $\eta_T^{\pm}$ as defined in \eqref{minmax}, it holds that
\[
k\eta_T^-\leq \eta_{kT}(\o)\leq k\eta_T^+, \quad \text{ for every } \omega \in \mathbb{R}.
\] 
\end{lemma}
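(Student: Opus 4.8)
The plan is to prove both properties by exploiting the relation \eqref{thetaomega} between the angular coordinate and the polar angle, together with the $T$-periodicity of the matrix $S(t)$, which yields a cocycle-type relation for the angle function on successive periods. The key observation is that, by the $T$-periodicity of $S(t)$, the solution starting at $t=0$ from $e^{-i\omega}$, when followed up to time $T$, arrives at a point $r(T;\omega)e^{-i\theta(T;\omega)}$; by uniqueness and periodicity, the angular increment over the next interval $[T,2T]$ is governed by the same angular equation \eqref{2.11}, but now starting from the angle $\theta(T;\omega)$. Thus I expect the fundamental relation
\[
\theta(kT;\omega) = \sum_{j=0}^{k-1}\bigl[\theta((j+1)T;\omega_j) - \theta(jT;\omega_j)\bigr],
\]
where $\omega_j := \theta(jT;\omega)\ (\mathrm{mod}\ \pi)$ is the angle (reduced modulo $\pi$ via \eqref{thetaomega}) at which the relevant solution enters the $j$-th period. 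In particular, each summand equals $2\pi\,\eta_T(\omega_j)$, so that $\eta_{kT}(\omega) = \sum_{j=0}^{k-1}\eta_T(\omega_j)$.

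For \textbf{(W4)}, suppose $\eta_T(\omega)=\ell\in\Z/2$. By (W1) in Lemma \ref{le2.iii}, $e^{-i\omega}$ is then an eigenvector of $M(T)$ with eigenvalue $\mu$ of sign $(-1)^{2\ell}$. Consequently $M(T)^j e^{-i\omega} = \mu^j e^{-i\omega}$, so the iterated solution returns, at each time $jT$, to the same ray (up to sign), i.e.\ $\omega_j \equiv \omega \ (\mathrm{mod}\ \pi)$ for every $j$. Using \eqref{thetaomega} and the reduction above, each angular increment over $[jT,(j+1)T]$ equals $2\pi\ell$, and summing over $j=0,\dots,k-1$ gives $\theta(kT;\omega)-\omega = 2\pi k\ell$, that is $\eta_{kT}(\omega)=k\ell$, as desired.

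For \textbf{(W5)}, I would use the decomposition $\eta_{kT}(\omega)=\sum_{j=0}^{k-1}\eta_T(\omega_j)$ together with the $\pi$-periodicity (W3) and the definition \eqref{minmax} of $\eta_T^\pm$. Since each $\omega_j$ ranges (after reduction modulo $\pi$) in $[0,\pi)$, each term satisfies $\eta_T^-\le \eta_T(\omega_j)\le \eta_T^+$; summing over the $k$ terms immediately yields $k\eta_T^-\le \eta_{kT}(\omega)\le k\eta_T^+$. The main obstacle I anticipate is making the cocycle relation rigorous: I must argue carefully, via the $T$-periodicity of $S(t)$ and uniqueness of solutions, that the solution on $[jT,(j+1)T]$ is (up to the radial factor) a time-shifted solution starting from the angle $\theta(jT;\omega)$, so that its angular increment is precisely $2\pi\,\eta_T(\theta(jT;\omega))$ with the angle correctly reduced modulo $\pi$ using \eqref{thetaomega}. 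Once this additivity of the angle across periods is established, both (W4) and (W5) follow directly.
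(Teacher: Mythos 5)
Your proposal is correct and follows essentially the same route as the paper: the cocycle relation you plan to make rigorous is exactly the paper's identity \eqref{iterationtheta}, and your resulting decomposition $\eta_{kT}(\omega)=\sum_{j=0}^{k-1}\eta_T(\omega_j)$, combined with the $\pi$-periodicity (W3), is precisely how the paper proves both (W4) and (W5). Two cosmetic caveats: your displayed sum should telescope to $\theta(kT;\omega)-\omega$ with summands $\theta((j+1)T;\omega)-\theta(jT;\omega)$ of the \emph{original} solution (each then identified with $2\pi\eta_T(\omega_j)$ via \eqref{iterationtheta}), and the eigenvector detour through (W1) in your proof of (W4) is not needed, since $\omega_j\equiv\omega\ (\mathrm{mod}\ \pi)$ follows directly by iterating \eqref{thetaomega}, as the paper does.
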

\begin{proof}
To prove (W4), we first observe that, as a consequence of the $T$-periodicity in $t$ of the differential equation \eqref{2.11}, it holds that
\begin{equation}\label{iterationtheta}
\theta(t+nT;\o) = \theta(t;\theta(nT;\omega)), \quad \text{ for any } t,\o \in \mathbb{R}, \, n \in \mathbb{Z}.
\end{equation}
Then, choosing $t = (k-1)T$ and $n=1$, we find
$$
\theta(kT;\omega) - \omega = \theta((k-1)T;\theta(T;\omega)) - \omega.
$$
Since $\theta(T;\omega) - \omega = 2\pi \ell \in \pi \mathbb{Z}$, formula \eqref{thetaomega} yields
$$
\theta(kT;\omega) - \omega = \theta((k-1)T;\omega) - \omega + 2\pi\ell.
$$
Iterating the argument, one finally finds
$$
\theta(kT;\omega) - \omega = \theta(0;\omega) - \omega + k2\pi\ell = k (\theta(T,\omega) - \omega). 
$$
This means that $\eta_{kT}(\omega) = k \eta_{T}(\omega) = k \ell$, as desired. 

To prove (W5), we use again \eqref{iterationtheta} to find
\begin{align*}
2\pi\eta_{2T}(\omega) & =\theta(2T;\omega) - \omega = \theta(T;\theta(T;\omega))-\theta(T;\omega) + \theta(T;\omega) - \omega \\
& = 2\pi (\eta_T(\theta(T;\omega)) +\eta_T(\omega))
\end{align*}
and hence
$$
2\eta^-_T \leq \eta_{2T}(\omega) \leq 2 \eta^+_T.
$$
Iterating the argument, the conclusion is obtained for every $k \geq 2$.
\end{proof}

We are now in a position to state the theorem for the iteration formula in the hyperbolic and parabolic cases.
The last column of each of the next tables provides the classification of the monodromy matrix $M(kT)$, according to the same labels as in Theorem \ref{th2.2}. To make a couple of examples, in the case $(1,p^-_r)$ the monodromy matrix $M(kT)$ is still of type $p^-_r$, meaning that it is parabolic, and $k\ell - 1/2 < \eta^-_{kT} < \eta^+_{kT} = k \ell$. In the case $(2,p^+)$, instead, the matrix $M(kT)$ is of type
$p^+$ when $k$ is odd, that is, is parabolic and 
$$
\mathbb{Z} + \tfrac{1}{2} \ni k\ell + (k-1)/2 + 1/2 = \eta^-_{kT} < \eta^+_{kT} <k\ell + (k-1)/2 + 1,
$$
while is of the type $p^+_r$ when $k$ is even that is, is parabolic and 
$$
\mathbb{Z} \ni k\ell + k/2 = \eta^-_{kT} < \eta^+_{kT} < k\ell + k/2 + 1/2.
$$
Notice that in this case the system is $kT$-resonant.

\begin{proposition}\label{itehp}
Let us assume that system \eqref{2.1} is either hyperbolic or parabolic. Then, with reference to the tables of Theorem \ref{th2.2}, for every integer $k \geq 1$ the following hold true: 
\begin{center}
\begin{tabular}{| l | c | c | l |}
\hline
\multicolumn{4}{|c|}{Case 1} \\ \hline  & $i_T$ & $i_{kT}$ & $M(kT)$-label  \\ \hline
$h$  & $2\ell$ & $2k\ell$ & $h\quad\;\; k\in\mathbb{Z}$ \\ \hline
$p^-_r$ & $2\ell$ & $2k\ell$ & $p^-_r\quad k\in\mathbb{Z}$ \\ \hline $p^+_r$ &
$2\ell$ & $2k\ell$ & $p^+_r\quad k\in\mathbb{Z}$ \\ \hline $p^*_r$ &
$2\ell$ & $2k\ell$ & $p^*_r\quad\, k\in\mathbb{Z}$ \\ \hline
\end{tabular}
\end{center} 

\begin{center}
\begin{tabular}{| l | c | c | l |}
\hline
\multicolumn{4}{|c|}{Case 2} \\ \hline 
& $i_T$ & $i_{kT}$ & $M(kT)$-label  \\ \hline
\multirow{2}{*}{$h$} & \multirow{2}{*}{$2\ell+1$} & 
$2(k\ell+k/2)\qquad\quad\;\, k\in 2\Z$ & \multirow{2}{*}{$h\quad\;\; k\in\mathbb{Z}$} \\ \cline{3-3} 
&  &$2(k\ell+(k-1)/2)+1\quad k\in 2\Z+1$ 
&
\\ \hline
\multirow{2}{*}{$p^-$} & \multirow{2}{*}{$2\ell+1$} & 
$2(k\ell+k/2)\qquad\quad\;\, k\in 2\Z$ &
$p^-_r\quad k\in 2\Z$\\ \cline{3-4}
&  &$2(k\ell+(k-1)/2)+1\quad k\in 2\Z+1$ 
&
$p^-\quad k\in 2\Z+1$
\\ \hline
\multirow{2}{*}{$p^+$} & \multirow{2}{*}{$2\ell+1$} & 
$2(k\ell+k/2)\qquad\quad\;\, k\in 2\Z$ &
$p^+_r\quad k\in 2\Z$\\ \cline{3-4}
&  & $2(k\ell+(k-1)/2)+1\quad k\in 2\Z+1$ 
&
$p^+\quad k\in 2\Z+1$
\\ \hline
\multirow{2}{*}{$p^*$} & \multirow{2}{*}{$2\ell+1$} & 
$2(k\ell+k/2)\qquad\quad\;\, k\in 2\Z$ &
$p^*_r\quad\, k\in 2\Z$\\ \cline{3-4}
&  &$2(k\ell+(k-1)/2)+1\quad\, k\in 2\Z+1$ 
&
$p^*\quad\, k\in 2\Z+1$
\\ \hline
\end{tabular}
\end{center} 
\end{proposition}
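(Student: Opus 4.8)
The plan is to base everything on the identity $M(kT) = M(T)^k$, which holds because $S(t)$ is $T$-periodic (so $M(t+T) = M(t)M(T)$). Hence the Floquet multipliers at time $kT$ are the $k$-th powers $\mu_1^k, \mu_2^k$ of those at time $T$, and reading this through the eigenvalue column of Theorem \ref{th2.2} fixes the \emph{type} of $M(kT)$ (hyperbolic or parabolic, Case 1 or Case 2), up to the ambiguity among the parabolic sub-labels $p^-, p^+, p^*$ which share the same pair of multipliers. The remaining data — the integer $\ell'$ labelling the new row, and the resolution of that sub-label — I would recover from the winding number via properties $(W4)$ and $(W5)$ of Lemma \ref{le3.1}: at an eigendirection $\omega^*$ of $M(T)$ (where, by $(W1)$ of Lemma \ref{le2.iii}, $\eta_T(\omega^*) \in \Z/2$) property $(W4)$ gives the exact value $\eta_{kT}(\omega^*) = k\,\eta_T(\omega^*)$, while $(W5)$ confines $\eta_{kT}$ to the band $[\,k\eta_T^-, k\eta_T^+\,]$.

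I would dispatch Case 1 first, where $i_T = 2\ell$ and every sub-row has multipliers in $\{1\} \cup \R^+_*$; since these classes are stable under taking $k$-th powers, $M(kT)$ stays of Case 1 type for all $k$. In the hyperbolic sub-case the two eigendirections lie at the integer level $\eta_T = \ell$, so $(W4)$ gives $\eta_{kT} = k\ell$ there; as the multipliers $\mu_i^k$ are still in $\R^+_*$, Theorem \ref{th2.2} places $M(kT)$ in row $(1,h)$ with crossed integer level $\ell' = k\ell$, hence $i_{kT} = 2k\ell$. In the resonant parabolic sub-cases ($\mu_1 = \mu_2 = 1$) the unique eigendirection $\omega^*$ has $\eta_T(\omega^*) = \ell$, so $(W4)$ gives $\eta_{kT}(\omega^*) = k\ell$; by $(W5)$ this equals $\max \eta_{kT}$ when $\eta_T^+ = \ell$ (row $p^-_r$) and $\min \eta_{kT}$ when $\eta_T^- = \ell$ (row $p^+_r$), which is precisely what distinguishes these rows at time $kT$, while $p^*_r$ ($M(T)=I$) gives $M(kT)=I$ and $\eta_{kT} \equiv k\ell$. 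In all four situations $i_{kT} = 2k\ell$ and the label is preserved.

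The parity-dependent phenomenon lives in Case 2, where $i_T = 2\ell+1$ and the relevant level is the half-integer $\ell + 1/2$. For the hyperbolic row ($\mu_1 = 1/\mu_2 \in \R^-_*$) one has $\mu_i^k \in \R^+_*$ for $k$ even (so $M(kT)$ drops into Case 1, row $h$) and $\mu_i^k \in \R^-_*$ for $k$ odd (Case 2, row $h$); applying $(W4)$ at the eigendirection yields $\eta_{kT} = k(\ell + 1/2) = k\ell + k/2$, which is an integer for even $k$ and a half-integer for odd $k$, so Theorem \ref{th2.2} returns $i_{kT} = 2(k\ell + k/2)$ and $2(k\ell + (k-1)/2)+1$, respectively. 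The parabolic rows ($\mu_1 = \mu_2 = -1$) are identical: $(-1)^k = 1$ for even $k$ turns them into resonant Case 1 rows, and $(-1)^k = -1$ for odd $k$ keeps them in Case 2; the $p^-$ versus $p^+$ alternative is again settled by whether the eigendirection value $k(\ell + 1/2)$ realizes $\max \eta_{kT}$ or $\min \eta_{kT}$ through $(W5)$, and the row $p^*$ ($M(T) = -I$) gives $M(kT) = (-1)^k I$ outright.

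Two points deserve the only real care, and I would isolate them as the technical core. First, to be sure the new system is genuinely resonant-parabolic rather than of type $p^*$, one must check $M(kT) \neq \pm I$ in the non-$p^*$ parabolic rows; this is a one-line computation, since a parabolic $M(T) \neq \pm I$ with double multiplier $\sigma = \pm 1$ is conjugate to $\begin{pmatrix} \sigma & c \\ 0 & \sigma \end{pmatrix}$ with $c \neq 0$, whose $k$-th power $\begin{pmatrix} \sigma^k & k\sigma^{k-1}c \\ 0 & \sigma^k \end{pmatrix}$ is never $\pm I$. Second, the $p^-/p^+$ discrimination: here I would use the sandwich $k\eta_T^- \leq \eta_{kT} \leq k\eta_T^+$ from $(W5)$ together with the exact value from $(W4)$, observing that $\eta_T^+$ (resp. $\eta_T^-$) coincides with the half-integer level exactly in the $p^-$ (resp. $p^+$) row, so the relevant bound is attained at $\omega^*$ and marks it as the extremum of $\eta_{kT}$. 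Everything else reduces to bookkeeping against the two tables of Theorem \ref{th2.2}.
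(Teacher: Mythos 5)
Your proposal is correct and follows essentially the same route as the paper's proof: both rest on $M(kT)=M(T)^k$ to fix the hyperbolic/parabolic type of the iterate, on property (W4) of Lemma \ref{le3.1} to locate the integer or half-integer level crossed by $\eta_{kT}$ (whence $i_{kT}$ via Theorem \ref{th2.2} read at time $kT$), and on (W5) to preserve the superscripts $\{-,+,*\}$. The only genuine addition is your explicit Jordan-form computation showing $M(kT)\neq \pm I$ in the non-$p^*$ parabolic rows, a point the paper treats as implicit in the (W5) argument; it is a worthwhile clarification but does not change the method.
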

\begin{proof}
The relation between the columns for $i_T$ and $i_{kT}$ holds as an easy consequence of (W4) in Lemma \ref{le3.1}.
Indeed, by Theorem \ref{th2.2}, in all the cases of the first table the graph of the function $\eta_T$ intersects the horizontal line
$\{(x,\ell) \}_{x \in \mathbb{R}}$ and no-other horizontal lines at semi-integer ordinate. Then, by (W4), the graph of the function $\eta_{kT}$ intersects the horizontal line
$\{(x,k\ell) \}_{x \in \mathbb{R}}$. Using again Theorem \ref{th2.2} (at time $kT$) this implies $i_{kT} = k\ell$. The argument for the cases of the second table is similar, with the only difference that, since the graph of the function $\eta_T$ intersects the horizontal line
$\{(x,\ell+1/2) \}_{x \in \mathbb{R}}$, the value of $i_{kT}$ is even or odd\footnote{Notice that, in any case, $i_{kT} = k (2\ell+1)$, which however is written as
$2(k\ell + k/2)$ or $2 (k\ell + (k-1)/2) + 1$ to emphasize that $i_{kT}$ is even or odd depending on whether $k$ is even or odd.}
depending on whether $k$ is even or odd.

It remains to check that the labels for $M(kT)$ are the ones indicated in the last column of each tables. 
To this end, we first observe that $M(kT)=M(T)^k$ and, thus, if $\mu_1, \mu_2$ are the eigenvalues of $M(T)$, then the eigenvalues of $M(kT)$ are $\mu_1^k$ and $\mu_2^k$.  This immediately implies that $M(kT)$ is hyperbolic (resp., parabolic) if $M(T)$ is hyperbolic
(resp., parabolic). The fact that the superscripts $\{-,+,*\}$ are the same for $M(T)$ and $M(kT)$, instead, is an easy consequence of (W5) of Lemma \ref{le3.1} (incidentally, notice that the preservation of the superscript $*$ also follows from a stability argument). Notice that the parabolic cases of the second tables are $kT$-resonant when $k$ is even, thus inheriting the subscript $r$. 
\end{proof}

We now deal with the elliptic cases, namely, the cases $(2,e^-)$ and $(2,e^+)$ of Theorem \eqref{th2.2}.
In this situation, the function $\eta_T$ does not take semi-integer values and the previous winding number argument cannot be used.
However, the following proposition can be given.

\begin{proposition}\label{iteell}
Let us assume that system \eqref{2.1} is elliptic, with $i_{T} = 2\ell + 1$. Then, it holds that
$$
2k\ell +1 \leq i_{kT} \leq 2k\ell +k   \quad \mbox{ or } \quad 2k\ell + k \leq i_{kT} \leq 2k\ell + 2k-1, 
$$
depending on whether system \eqref{2.1} is of type $(2,e^-)$ or $(2,e^+)$.
Moreover, denoting by $e^{\pm i \varphi}$, with $\varphi \in (0,\pi)$, the eigenvalues of $M(T)$, the following holds true:
\begin{itemize}
\item if $\varphi$ is not commensurable with $\pi$, then $M(kT)$ is elliptic for every $k \geq 1$, and, thus, $i_{kT}$ is odd, 
\item if $\varphi$ is commensurable with $\pi$, say $\varphi = \pi p/q$ with $p,q$ coprime integers with $q \geq 2$ and 
$1 \leq p \leq (q-1)$, then
 $$
 \left\{
\begin{array}{ll}
$M(kT)$ \text{ is elliptic } \;\;&\hbox{if}\;\; k\notin q \Z,\\[2pt]
$M(kT)$ \text{ is parabolic stable (with eigenvalues $1$) }\;\;&\hbox{if}\;\;p \in 2\mathbb{N}, \, k \in q\Z,\\[2pt]
$M(kT)$ \text{ is parabolic stable (with eigenvalues $1$) }\;\;&\hbox{if}\;\; p \in 2\mathbb{N} + 1, \, k \in q2\Z,\\[2pt]
$M(kT)$ \text{ is parabolic stable (with eigenvalues $-1$) }\;\;&\hbox{if}\;\;p \in 2\mathbb{N} + 1, \, k \in q(2\Z+1),
\end{array} 
\right. 	
$$
and, thus, $i_{kT}$ is odd or even accordingly.
\end{itemize}
\end{proposition}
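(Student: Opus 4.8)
The plan is to analyze the elliptic case via the Floquet representation and the eigenvalues of the monodromy matrix, exactly as was done for the hyperbolic and parabolic cases, but now exploiting the fact that the eigenvalues lie on $\mathbb{S}^1_*$. Since $M(kT) = M(T)^k$, if $M(T)$ has eigenvalues $e^{\pm i\varphi}$ with $\varphi \in (0,\pi)$, then $M(kT)$ has eigenvalues $e^{\pm i k \varphi}$. First I would settle the classification of $M(kT)$. The matrix $M(kT)$ fails to be elliptic precisely when $k\varphi \in \pi \mathbb{Z}$, i.e. when $e^{\pm i k\varphi} = \pm 1$; otherwise it remains elliptic. If $\varphi/\pi$ is irrational, then $k\varphi \notin \pi\mathbb{Z}$ for every $k \geq 1$, so $M(kT)$ stays elliptic and, by Theorem \ref{th2.2}, $i_{kT}$ is odd. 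If $\varphi = \pi p/q$ with $p,q$ coprime and $q \geq 2$, then $k\varphi \in \pi\mathbb{Z}$ if and only if $q \mid k$; a short parity bookkeeping on $kp/q$ then determines whether the double eigenvalue is $+1$ or $-1$, giving the four subcases listed. In each degenerate subcase $M(kT)$ is diagonalizable (being a power of a matrix that is conjugate to a rotation), hence equals $\pm I$, so by Corollary \ref{corstab} it is parabolic \emph{stable}, landing in a $p^*$ (or $p^*_r$) case, which is exactly what the statement claims.

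Next I would establish the two-sided bounds on $i_{kT}$. The natural tool is the relation between $i_{kT}$ and the winding number $\eta_{kT}$ recorded in Theorem \ref{th2.2}, together with the monotone bounds $k\eta_T^- \leq \eta_{kT}(\omega) \leq k\eta_T^+$ from (W5) of Lemma \ref{le3.1}. In the case $(2,e^-)$ we have $\ell < \eta_T^- \leq \eta_T^+ < \ell + 1/2$, hence $k\ell < k\eta_T^- \leq \eta_{kT}(\omega) \leq k\eta_T^+ < k\ell + k/2$ for all $\omega$; in the case $(2,e^+)$ we have $\ell + 1/2 < \eta_T^- \leq \eta_T^+ < \ell + 1$, hence $k\ell + k/2 < \eta_{kT}(\omega) < k\ell + k$. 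Reading off $i_{kT}$ from the winding number of $M(kT)$ via the synoptic table, the confinement of $\eta_{kT}$ to the stated intervals forces $i_{kT}$ into the windows $2k\ell + 1 \leq i_{kT} \leq 2k\ell + k$ and $2k\ell + k \leq i_{kT} \leq 2k\ell + 2k - 1$, respectively. The endpoints correspond precisely to the borderline configurations where $M(kT)$ becomes parabolic stable (the $p^*$ cases identified above), which is why the inequalities are non-strict.

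The one delicate point is matching the parity claim ``$i_{kT}$ is odd or even accordingly'' with the bounds, and verifying consistency at the degenerate values of $k$. When $M(kT)$ is elliptic, the table gives $i_{kT}$ odd, consistent with the open windows; when $M(kT) = \pm I$ is parabolic stable, $i_{kT}$ may be even (if $M(kT) \in \Lambda^-$, i.e. the $p^*_r$ case with eigenvalue $+1$) or odd (if $M(kT) \in \Lambda^+$, i.e. the $p^*$ case with eigenvalue $-1$), and one must check that the value $i_{kT}$ forced by the winding-number endpoint has the correct parity dictated by the eigenvalue sign. I expect the main obstacle to be precisely this parity/endpoint bookkeeping in the commensurable subcase: one has to track whether $\eta_{kT}$ hits an integer or a half-integer level (equivalently whether the limiting eigenvalue is $+1$ or $-1$) and confirm that the resulting even/odd value of $i_{kT}$ agrees with the sign of $p$ and the residue of $k$ modulo $q$ and $2q$. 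All of this reduces to elementary arithmetic with $kp/q$, but it requires care to state cleanly and to align with the $\Lambda^{\pm}$ characterization $i_{kT} \in 2\mathbb{Z} \Leftrightarrow M(kT) \in \Lambda^-$ from Section \ref{sec2.3}.
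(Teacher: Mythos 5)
Your proposal is correct and takes essentially the same route as the paper: the classification of $M(kT)$ follows from the eigenvalues $e^{\pm i k\varphi}$ of $M(T)^k$, and the index bounds come from (W5) of Lemma \ref{le3.1} combined with Theorem \ref{th2.2} applied at time $kT$. The only differences are matters of detail: the paper explicitly carries out the endpoint/parity analysis (distinguishing $k$ even and odd to identify the extreme admissible values of $i_{kT}$) that you correctly flag as the delicate bookkeeping step but do not execute, while you supply a diagonalizability justification for the conclusion $M(kT)=\pm I$ in the commensurable case that the paper leaves implicit.
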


\begin{remark}\label{remsecond}
\rm For further convenience, we observe that the matrix $M(2T)$ is either elliptic or parabolic stable with eigenvalue $-1$.
Indeed, its eigenvalues are $e^{\pm 2 i \varphi}$ with $\varphi \in (0,\pi)$.
\end{remark}

\begin{proof}
The second part of the statement is a direct consequence of the fact, already observed in the proof of Proposition \ref{itehp}, that the eigenvalues of $M(kT)$ are $e^{\pm k \varphi}$. We thus focus on the Conley-Zehnder index estimate.
By (W5) of Lemma \ref{le3.1}, and assuming to fix the ideas that system \eqref{2.1} is of type $(2,e^-)$, it holds that
$$
k\ell < \eta_{kT}(\omega) <k\ell + k/2, \quad \mbox{ for every } \omega \in \mathbb{R}. 
$$
By Theorem \ref{th2.2} at time $kT$, the extreme possibilities for the winding number $\eta_{kT}$ which are compatible with the above estimate are the following.
On one hand, it could be 
$$
k\ell < \eta_{kT}(\omega) < k\ell + 1/2;
$$
in this case, system \eqref{2.1} is, at time $kT$, of type $(2,e^-)$ and so $i_{kT} = 2k\ell + 1$. On the other hand, 
it could be
$$
k\ell + (k-2)/2 +1/2 <\eta_{kT}(\omega) < k\ell + k/2 = k\ell + (k-2)/2 + 1 \quad \text{ if } k \in 2\Z
$$
or
$$
k\ell + (k-1)/2 <\eta_{kT}(\omega) < k\ell + k/2 = k\ell + (k-1)/2 + 1/2 \quad \text{ if } k \in 2\Z;
$$
in the former case, system \eqref{2.1} is, at time $kT$, of type $(2,e^+)$ and so 
$$
i_{kT} = 2\left( k\ell + (k-2)/2 \right) + 1 = 2k\ell + k -1,
$$
while in the latter one system \eqref{2.1} is, at time $kT$, of type $(2,e^-)$ and so
$$
i_{kT} = 2\left( k\ell + (k-1)/2) \right) + 1= 2k\ell + k.
$$
Summing up, we have obtained $2k\ell +1 \leq i_{kT} \leq 2k\ell +k$, as desired. The argument for a system of type $(2,e^+)$ is analogous.
\end{proof}

To conclude this section, we observe that from the above analysis we can easily obtain the following ``stability via second iteration'' result, see for instance \cite[Th. 6]{DO18}.

\begin{corollary}
Let us assume that system \eqref{2.1} is $T$-nonresonant and $2T$-nonresonant. Then, it is stable if, and only if, the indices 
$i_T$ and $i_{2T}$ are both odd.
\end{corollary}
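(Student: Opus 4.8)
The plan is to characterize stability via the indices $i_T$ and $i_{2T}$ by invoking Corollary \ref{corstab} together with the iteration formulas of Propositions \ref{itehp} and \ref{iteell}. Under the standing assumption that the system is $T$-nonresonant and $2T$-nonresonant, the parabolic cases of Theorem \ref{th2.2} are entirely excluded: a $T$-resonant parabolic system (cases $(1,p^\bullet_r)$) is forbidden by $T$-nonresonance, while a $2T$-resonant parabolic system (cases $(2,p^\bullet)$, which are $2T$-resonant since $M(T)$ has eigenvalue $-1$) is forbidden by $2T$-nonresonance. Hence the system is either hyperbolic or elliptic, and by Corollary \ref{corstab} stability is equivalent to ellipticity. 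The strategy is therefore to show that the parity condition ``$i_T$ and $i_{2T}$ are both odd'' singles out exactly the elliptic case.

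First I would treat the hyperbolic case. If the system is of type $(1,h)$, then $i_T = 2\ell$ is even, so the parity condition already fails on account of $i_T$. If instead it is of type $(2,h)$, then $i_T = 2\ell+1$ is odd, but by the second table of Proposition \ref{itehp} (with $k=2$, which is even) the monodromy matrix $M(2T)$ is again hyperbolic of type $h$ and $i_{2T} = 2(2\ell + 1)$ is even; thus the condition fails through $i_{2T}$. In both hyperbolic subcases, therefore, at least one of $i_T, i_{2T}$ is even, and the system is unstable by Corollary \ref{corstab}.

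Next I would treat the elliptic case, namely type $(2,e^-)$ or $(2,e^+)$, where $i_T = 2\ell+1$ is odd. Here I invoke Remark \ref{remsecond}: since the eigenvalues of $M(T)$ are $e^{\pm i\varphi}$ with $\varphi \in (0,\pi)$, the eigenvalues of $M(2T)$ are $e^{\pm 2i\varphi}$, and $M(2T)$ is either elliptic or parabolic stable with eigenvalue $-1$. The latter possibility, $e^{\pm 2i\varphi} = -1$, would force $M(2T)$ to have eigenvalue $-1$, hence a nontrivial $2T$-antiperiodic (thus $2T$-periodic) solution, contradicting $2T$-nonresonance. Therefore $M(2T)$ is elliptic, and by Theorem \ref{th2.2} at time $2T$ its index $i_{2T}$ is odd. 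Hence both indices are odd precisely in the elliptic case, which is exactly the stable case by Corollary \ref{corstab}, completing the equivalence.

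The reasoning is essentially a case-by-case bookkeeping once the nonresonance hypotheses have been used to discard the parabolic rows of both tables; the only point requiring genuine care is the elliptic $k=2$ iteration, where one must rule out the parabolic-stable degeneration $e^{\pm 2i\varphi}=-1$ by appealing to $2T$-nonresonance rather than merely to the index formula of Proposition \ref{iteell} (which, taken alone, allows even values of $i_{2T}$ at the resonant multiples of $\varphi$). I expect this verification, that $2T$-nondegeneracy forces $M(2T)$ to stay elliptic, to be the main—though mild—obstacle, since it is the step linking the parity of $i_{2T}$ to genuine strong stability rather than to a borderline parabolic configuration.
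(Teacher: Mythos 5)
Your overall architecture is sound and matches the paper's: exclude the parabolic rows by the two nonresonance hypotheses, observe that stability is then equivalent to ellipticity (Corollary \ref{corstab}), kill the hyperbolic cases by parity ($(1,h)$ has $i_T$ even; $(2,h)$ has $i_{2T}=2i_T$ even by Proposition \ref{itehp} with $k=2$), and handle the elliptic case via Remark \ref{remsecond}. However, the step you single out as the crux is wrong. You claim that if $M(2T)$ has eigenvalue $-1$, then the system has a nontrivial $2T$-antiperiodic, ``thus $2T$-periodic'', solution, contradicting $2T$-nonresonance. A $2T$-antiperiodic solution satisfies $z(t+2T)=-z(t)$ and hence is $4T$-periodic, \emph{not} $2T$-periodic; $2T$-nonresonance only forbids the eigenvalue $1$ for $M(2T)$ and is perfectly compatible with $M(2T)=-I$. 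A concrete counterexample to your exclusion: take $S(t)\equiv \frac{\pi}{2T} I$, so that every solution rotates clockwise with constant angular speed $\pi/(2T)$. Then $M(T)$ is the rotation by $\pi/2$ (eigenvalues $\pm i$, so $T$-nonresonant), $M(2T)=-I$ (eigenvalues $-1$, so $2T$-nonresonant), and the system is stable --- yet $M(2T)$ is exactly the parabolic-stable configuration you claim cannot occur. So the degeneration $e^{\pm 2i\varphi}=-1$ cannot be ruled out under the hypotheses of the corollary.

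Fortunately, it does not need to be ruled out, and this is where the paper's proof proceeds differently. When $M(2T)=-I$, the path $M(t)$ on $[0,2T]$ still lies in $\Gamma$ (the eigenvalue of $M(2T)$ is $-1\neq 1$, so the classical index is defined), and Theorem \ref{th2.2} applied at time $2T$ places the system in Case 2, type $(2,p^*)$, whose index is odd. Hence $i_{2T}$ is odd in \emph{both} alternatives of Remark \ref{remsecond} --- elliptic or parabolic with eigenvalue $-1$ --- which is all you need to close the elliptic case; this is precisely the paper's one-line conclusion that $i_{2T}$ is also odd. Note also that Proposition \ref{iteell} does not threaten you with even values of $i_{2T}$: evenness occurs there only when $M(kT)$ is parabolic with eigenvalue $+1$, which Remark \ref{remsecond} excludes for $k=2$. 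With this repair, your proof becomes correct and essentially coincides with the paper's.
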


\begin{proof}
At first, we notice that the parabolic cases $(1,p^-_r)$, $(1,p^+_r)$, $(1,p^*-_r)$ are ruled out since they are $T$-resonant, while the parabolic cases $(2,p^-)$, $(2,p^+)$, $(2,p^*)$) are ruled out since, by Proposition \ref{itehp}, they are $2T$-resonant (since $k=2$ is even).
Thus, system \eqref{2.1} is stable if, and only if, it is of the type $(2,e^-)$ or $(2,e^+)$. 

So, let us assume that system \eqref{2.1} is stable. By the previous discussion, $i_T$ must be odd. 
Moreover, by Proposition \ref{iteell} and Remark \ref{remsecond} the matrix $M(2T)$ is either elliptic or parabolic with eigenvalues equal to $-1$. Hence, $i_{2T}$ is also odd.

Conversely, let us assume that $i_T$ and $i_{2T}$ are both odd. From the fact that $i_T$ is odd we infer that system \eqref{2.1} is of type $(2,h)$, $(2,e^-)$ or $(2,e^+)$. However, assuming that it is of type $(2,h)$ we would get, by Proposition \ref{itehp}, that $i_{2T} = 2 i_{T}$ and so that $i_{2T}$ is even. Since this cannot happen, system \eqref{2.1} is elliptic and so it is stable.
\end{proof}

For a thorough treatment of the iteration theory for the Conley-Zehnder index, we refer to \cite[Sec. 4]{Long}.

\subsection{The rotation number, the mean Conley-Zehnder index and their relation}\label{sec3.2}

Following the ideas of Moser \cite{Mo81}, the equation \eqref{2.11} for the angular variation of a solution can be meant as an autonomous differential equation on a torus and so we can define the \textit{rotation number} $\rho$ of \eqref{2.1} as the real number
$$
\rho = \lim_{k \to +\infty} \frac{\eta_{kT}(\omega)}{k},
$$
where, as in the previous section, $\eta_{kT}(\omega)$ is the winding number on the interval $[0,kT]$ of the solution starting, at $t = 0$, at $e^{-i\omega}$. The fact that the above limit exists is proved in many textbooks, see for instance, in \cite[Th. 3.1]{Ha}. A less known property which we will need in the following is the global inequality 
\begin{equation}\label{rotglobal}
\vert \eta_{kT}(\omega) - k \rho \vert < 1, \quad \text{ for every } \omega \in [0,2\pi),\, k \in \mathbb{Z},
\end{equation}
for which we refer to \cite[inequality (2.5)]{He79}. Incidentally, notice that the above inequality implies that the limit defining the rotation number exists and it is uniform with respect to $\omega$.

On the other hand, the \textit{mean Conley-Zehnder index} $m$ of \eqref{2.1} is defined as the real number
$$
m = \lim_{k \to +\infty} \frac{i_{kT}}{k},
$$
where $i_{kT}$ denotes the Conley-Zehnder index of \eqref{2.1} at time $kT$, that is, the index for the sympectic path $M(t)$ on the time $[0,kT]$, with $M(t)$ the fundamentral matrix of \eqref{2.1}. This definition corresponds (up to a normalization constant) to the one given in \cite[Def. 3.4]{Abb} (see also \cite[Ch. 1, Sec. 6, Def. 2]{Ek}), where it is called mean winding number. Incidentally, notice that in \cite{Abb} the definition is given only for non-resonant systems, but we can here extend it to the general case according to the definition of the Conley-Zehnder index for resonant systems given in Theorem \ref{th2.2} (see also Remark \ref{remdefres}). 

The above indices are immediately computed if system \eqref{2.1} is either hyperbolic or parabolic. Indeed, using Theorem \ref{th2.2} and (W4) of Lemma \ref{le3.1} to compute $\rho$ and Proposition \ref{itehp} to compute $m$, it is easily seen that
$$
i_T = 2\ell \Longrightarrow 2\rho = m = 2\ell
$$
and 
$$
i_T = 2\ell + 1 \Longrightarrow 2 \rho = m = 2\ell + 1.
$$
Hence, $m = 2\rho$ in the hyperbolic and parabolic cases.
\par
The main result of this section establishes that this equality holds true also in the elliptic case. Precisely, we have the following result.

\begin{theorem}\label{mequalrho}
With reference to the tables of Theorem \ref{th2.2}, the following hold true:
\begin{center}
\begin{tabular}{| l | c | c | c | c |}
\hline
\multicolumn{5}{|c|}{Case 1} \\ \hline &
$\eta_T$ & $\rho$ & $i_T$ & $m$ \\ \hline
$h$ & $\ell-1/2<\eta_T^{-}<\ell<\eta_T^+<\ell+1/2$ & $\ell$ & $2\ell$ & $2\ell$ \\ \hline
$p^-_r$ & $\ell-1/2<\eta_T^-<\eta_T^+=\ell$ & $\ell$ & $2\ell$ & $2\ell$\\ \hline $p^+_r$ &
$\ell=\eta_T^-<\eta_T^+<\ell+1/2$ & $\ell$  & $2\ell$ & $2\ell$ \\ \hline $p^*_r$ &
$\ell=\eta_T^-=\eta_T^+$ & $\ell$ & $2\ell$ & $2\ell$\\ \hline
\end{tabular}
\end{center} 

\begin{center}
\begin{tabular}{| l | c | c | c | c|}
\hline
\multicolumn{5}{|c|}{Case 2} \\ \hline
& $\eta_T$ & $\rho$ & $i_T$ & $m$ \\ \hline
$h$ & $\ell<\eta_T^-<\ell+1/2<\eta_T^+<\ell+1$ & $\ell+1/2$ & $2\ell+1$ & $2\ell+1$ \\ \hline
$p^-$ & $\ell<\eta_T^-<\eta_T^+=\ell+1/2$ & $\ell+1/2$ & $2\ell + 1$ & $2\ell + 1$ \\ \hline
$p^+$ & $\ell+1/2=\eta_T^-<\eta_T^+<\ell+1$ & $\ell+1/2$ & $2\ell + 1$ & $2\ell + 1$ \\ \hline
$p^*$ & $\ell+1/2=\eta_T^-=\eta_T^+$ & $\ell+1/2$ & $2\ell + 1$ & $2\ell + 1$ \\ \hline
$e^-$ & $\ell<\eta_T^-\leq\eta_T^+<\ell+1/2$ & $\ell+\tau$ & $2\ell + 1$ & $2(\ell+\tau)$ \\ \hline
$e^+$ & $\ell+1/2<\eta_T^-\leq\eta_T^+<\ell+1$ & $\ell+\tau$ & $2\ell + 1$ & $2(\ell+\tau)$ \\ \hline
\end{tabular}
\end{center}
where $\tau \in (0,1/2)$ in the case $(2,e^-)$ and $\tau \in (1/2,1)$ in the case $(2,e^+)$. In particular, for every linear Hamiltonian system \eqref{2.1} it holds that $m = 2\rho$.
\end{theorem}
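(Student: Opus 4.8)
The plan is to treat the hyperbolic/parabolic cases and the elliptic cases separately. The former are already settled by the computation carried out immediately before the statement (via (W4) of Lemma \ref{le3.1}, Theorem \ref{th2.2} and Proposition \ref{itehp}): there $i_{kT} = k\,(2\rho)$ with $2\rho \in \{2\ell, 2\ell+1\}$ exactly, whence $m = 2\rho$. Thus the only genuinely new content concerns the elliptic cases $(2,e^-)$ and $(2,e^+)$, and the whole argument will rest on a single uniform estimate comparing the Conley–Zehnder index $i_{kT}$ with twice the winding number $\eta_{kT}(\omega)$.

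First I would locate the rotation number. By (W5) of Lemma \ref{le3.1} one has $k\eta_T^- \le \eta_{kT}(\omega) \le k\eta_T^+$ for every integer $k \ge 1$ and every $\omega$; dividing by $k$ and letting $k \to +\infty$ gives $\eta_T^- \le \rho \le \eta_T^+$. In the case $(2,e^-)$ the defining inequalities $\ell < \eta_T^- \le \eta_T^+ < \ell + 1/2$ then force $\rho = \ell + \tau$ with $\tau \in (0,1/2)$, and analogously $\tau \in (1/2,1)$ in the case $(2,e^+)$; this already produces the entries of the $\rho$-column. It remains to show that $m = 2\rho = 2(\ell+\tau)$.

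The heart of the matter is the claim that
\[
\vert\, i_{kT} - 2\eta_{kT}(\omega)\,\vert < 1 \qquad \text{for every integer } k \ge 1 \text{ and every } \omega.
\]
To prove it I would first note that, since \eqref{2.1} is elliptic, the eigenvalues of $M(kT) = M(T)^k$ are $e^{\pm i k\varphi}$ and hence always lie on $\mathbb{S}^1$; consequently, by Proposition \ref{iteell}, for each $k$ the matrix $M(kT)$ is either elliptic or equals $\pm I$ (a diagonalizable matrix with a repeated unimodular eigenvalue being a multiple of the identity). I would then invoke Theorem \ref{th2.2} at time $kT$ in each situation. If $M(kT)$ is elliptic, it is of type $(2,e^-)$ or $(2,e^+)$, so $i_{kT} = 2\ell_k + 1$ while $\eta_{kT}(\omega)$ lies in the open half-integer band $(\ell_k, \ell_k + 1/2)$ or $(\ell_k+1/2, \ell_k+1)$, respectively; a one-line comparison gives $i_{kT} - 2\eta_{kT}(\omega) \in (0,1)$ in the first sub-case and $(-1,0)$ in the second. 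If $M(kT) = -I$ (type $p^*$), then $\eta_{kT}(\omega) \equiv \ell_k + 1/2$ and $i_{kT} = 2\ell_k+1$, whereas if $M(kT) = I$ (type $p^*_r$), then $\eta_{kT}(\omega) \equiv \ell_k$ and $i_{kT} = 2\ell_k$; in both degenerate cases the difference vanishes identically. This exhausts the possibilities and establishes the displayed estimate.

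With the estimate in hand the conclusion is immediate: dividing by $k$ yields $\vert i_{kT}/k - 2\eta_{kT}(\omega)/k\vert < 1/k \to 0$, and since $\eta_{kT}(\omega)/k \to \rho$ (the existence and $\omega$-independence of this limit being the standard fact recalled around \eqref{rotglobal}), the sequence $i_{kT}/k$ converges to $2\rho$; in particular the mean index $m$ exists and equals $2\rho = 2(\ell+\tau)$, as claimed. I expect the main obstacle to be the bookkeeping in the uniform estimate, specifically the need to verify, via Proposition \ref{iteell}, that no hyperbolic or unstable parabolic matrix $M(kT)$ can occur along the iteration, so that Theorem \ref{th2.2} may be applied in precisely the four admissible sub-cases; once the list of possible types of $M(kT)$ is pinned down, each comparison between $i_{kT}$ and $2\eta_{kT}(\omega)$ reduces to an elementary inequality.
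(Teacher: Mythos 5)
Your proof is correct, and it takes a genuinely different --- and more elementary --- route than the paper's. The paper, after the same reduction to the elliptic case and the same use of (W5) for the $\rho$-column, works through the covering projection \eqref{defcovering}: it compares $\pi i_{kT}$ with the polar-decomposition angle $\vartheta(kT)$ via \eqref{formulaMRZ1}--\eqref{formulaMRZ2}, imports from \cite{Abb} the facts $m=\delta(T)$, $\delta(kT)=k\delta(T)$ and $\vert\delta(kT+1)-\delta(kT)\vert<1$, constructs a special angle $\omega_k$ (an eigenvector direction of the symmetric factor of $M(kT)$) satisfying $\eta_{kT}(\omega_k)=\vartheta(kT)/2\pi$, and then must invoke the uniform Herman-type inequality \eqref{rotglobal} precisely because that special angle $\omega_k$ changes with $k$. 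You bypass all of this: the estimate $\vert i_{kT}-2\eta_{kT}(\omega)\vert<1$ --- which the paper also derives from Theorem \ref{th2.2} at time $kT$, but uses only as an auxiliary step inside its claim \eqref{claim1} --- already suffices once combined with the pointwise limit $\eta_{kT}(\omega)/k\to\rho$ at a single fixed $\omega$; the squeeze then yields both the existence of the limit defining $m$ and the identity $m=2\rho$, with no $\delta$-machinery and no need for uniformity in $\omega$. One further simplification you could make: your case analysis via Proposition \ref{iteell} (elliptic, $I$, or $-I$) is correct but superfluous, since $\vert i_{kT}-2\eta_{kT}(\omega)\vert<1$ holds in \emph{every} row of both tables of Theorem \ref{th2.2} applied at time $kT$ (in Case 1, $i_{kT}=2\ell_k$ while $2\eta_{kT}\in(2\ell_k-1,2\ell_k+1)$; in Case 2, $i_{kT}=2\ell_k+1$ while $2\eta_{kT}\in(2\ell_k,2\ell_k+2)$); hence the obstacle you anticipate at the end --- excluding hyperbolic or unstable parabolic iterates --- never arises, and your argument in fact establishes $m=2\rho$ uniformly in all cases, not only the elliptic ones. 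What the paper's longer detour buys is the intermediate identity $m=\delta(T)$ and the exact realization \eqref{claim1}, which relate the mean index to the rotation function of the monodromy path and have independent interest, but are not needed to prove the theorem.
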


\begin{proof}
As observed in the discussion preceding the theorem, it is enough to consider the elliptic case; thus, we assume $i_T = 2\ell + 1$. From now on, we will use several times the covering projection \eqref{defcovering}. Having recalled this notation, we start with some preliminary considerations.

At first, we observe that from the formulas \eqref{formulaMRZ1} and \eqref{formulaMRZ2} (at time $kT$) it must hold
\begin{equation}\label{stima1}
\vert \pi i_{kT} - \vartheta(kT) \vert < \pi.
\end{equation}
Since by definition $i_{kT} = \delta(kT+1)$ where $\delta$ is the function introduced in \eqref{2.17}, and $\vert \delta(kT) - \delta(kT+1) \vert < 1$ by \cite[Formula (1)]{Abb},
from \eqref{stima1} we obtain
\begin{equation}\label{stima2}
\vert \pi\delta(kT) - \vartheta(kT) \vert < 2\pi.
\end{equation}

Second, we claim that, for every integer $k \geq 1$, there exists $\omega_k \in [0,\pi)$ such that
\begin{equation}\label{claim1}
\eta_{kT}(\omega_k) = \frac{\vartheta(kT)}{2\pi}.
\end{equation}
Indeed, let us take $\omega_k \in [0,\pi)$ such that
$O(\vartheta(kT))e^{-i\omega_k}$ is an eigenvector of the symmetric and positive definite matrix $P(\tau(kT),\sigma(kT))$. Then,
$$
M(kT) e^{-i\omega_k} = \lambda_k e^{-i(\omega_k + \vartheta(kT))}, \quad \text{ for some } \lambda_k > 0.
$$
Since, on the other hand,
$$
M(kT) e^{-i\omega_k} = \lambda_k e^{-i\theta(kT;\omega_k)}=\lambda_k e^{-i(\omega_k + 2\pi \eta_{kT}(\omega_k))},
$$
we deduce that $2\pi\eta_{kT}(\omega_k) - \vartheta(kT)$ must be an integer multiple of $2\pi$.
However, Theorem \ref{th2.2} (at time $kT$) implies that
$$
\vert 2\eta_{kT}(\omega) - i_{kT} \vert < 1, \quad \mbox{ for every } \omega \in \mathbb{R};
$$
using this information together with \eqref{stima1}, we thus obtain 
$$
\vert 2\pi\eta_{kT}(\omega_k) - \vartheta(kT) \vert \leq \pi \vert 2\eta_{kT}(\omega_k) - i_{kT} \vert + 
\vert \pi i_{kT} - \vartheta(kT) \vert < 2\pi.
$$
Thus, it must be $2\pi\eta_{kT}(\omega_k) - \vartheta(kT) = 0$, which corresponds to \eqref{claim1}.

Finally, we recall that, as shown in \cite{Abb}, 
\begin{equation}\label{defmabbo}
m = \delta(T)
\end{equation}
and, by \cite[Prop. 3.1]{Abb}, it holds $\delta(kT) = k \delta(T)$. We are now in a position to conclude the proof. Let us write
\begin{align*}
\frac{m}{2}-\rho & = \frac{\delta(T)}{2} - \rho = \frac{\delta(kT)}{2k} - \rho = \frac{\vartheta(kT)}{2\pi k} - \rho + 
\frac{\pi\delta(kT)-\vartheta(kT)}{2\pi k} \\
& =  \frac{\eta_{kT}(\omega_k)}{k} - \rho + 
\frac{\pi\delta(kT)-\vartheta(kT)}{2\pi k}.
\end{align*}
Then, using \eqref{rotglobal} and \eqref{stima2}, we find
$$
\left\vert \frac{m}{2}-\rho \right\vert \leq \left\vert \frac{\eta_{kT}(\omega_k)}{k} - \rho \right\vert + \frac{\vert\pi\delta(kT)-\vartheta(kT)\vert}{2\pi k} \leq \frac{2}{k}.
$$
Letting $k \to +\infty$, we thus infer $2\rho = m$, as desired.

To conclude the proof, it remains to check that  the rotation number is of the form 
$\rho = \ell + \tau$ with $\tau \in (0,1/2)$ in the case $(2,e^-)$ and $\tau \in (1/2,1)$ in the case $(2,e^+)$. This is an easy consequence of (W5) of Lemma \ref{le3.1}
(even more, we can say that $\tau \in (\eta^-_T-\ell,\eta^+_T-\ell)$). 
\end{proof}

\begin{remark}
\rm
Notice that, from $m=2\rho$, relation \eqref{defmabbo} and the definition of the rotation function \eqref{defrotfunction} given in Section \ref{sec2.3}, it follows that, if $\rho = \ell + \tau$,
the eigenvalues of $M(T)$ are given by $e^{\pm 2\pi i \tau}$. This can be used in connection with Proposition \ref{iteell} to study the behavior of the iterates $M(kT)$.
\end{remark}

We conclude the section with two corollaries of Theorem \ref{mequalrho}. The first one, which follows directly from the tables of the theorem, is the following ``stability via rotation number'', cf.  \cite[Prop. 3.3]{Abb}.

\begin{corollary}
Let us assume that system \eqref{2.1} is $T$-nonresonant and $2T$-nonresonant. Then, it is stable if and only if 
$\rho \in \mathbb{R} \setminus \mathbb{Z}/2$.
\end{corollary}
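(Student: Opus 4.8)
The plan is to deduce the statement by reading off the relevant columns of the tables in Theorem \ref{mequalrho}, after first using the two nonresonance hypotheses to discard all the parabolic cases.

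First I would translate the assumptions into a condition on the Floquet multipliers. Since system \eqref{2.1} is $T$-nonresonant, by Lemma \ref{le2.1} the monodromy matrix $M(T)$ does not have $1$ as an eigenvalue. Moreover, a nontrivial $2T$-periodic solution corresponds to an eigenvalue of $M(2T)=M(T)^2$ equal to $1$, that is, to an eigenvalue $\mu$ of $M(T)$ with $\mu^2=1$; since $\mu=1$ is already excluded, the hypothesis of $2T$-nonresonance is equivalent to saying that $-1$ is not an eigenvalue of $M(T)$ (equivalently, by Lemma \ref{le2.1}, that there is no nontrivial $T$-antiperiodic solution). Hence $M(T)$ has neither $+1$ nor $-1$ among its eigenvalues. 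Looking at the eigenvalue column of Theorem \ref{th2.2}, this rules out every parabolic case, both the $T$-resonant ones $(1,p^-_r),(1,p^+_r),(1,p^*_r)$ and the ones $(2,p^-),(2,p^+),(2,p^*)$, leaving only the four cases $(1,h)$, $(2,h)$, $(2,e^-)$ and $(2,e^+)$.

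Next I would read the stability column. Among the surviving cases, $(1,h)$ and $(2,h)$ are hyperbolic and therefore unstable, while $(2,e^-)$ and $(2,e^+)$ are elliptic and therefore (strongly) stable; this is exactly the content of Corollary \ref{corstab}. Consequently, under our hypotheses the system is stable if, and only if, it is elliptic, that is, if and only if we are in the case $(2,e^-)$ or $(2,e^+)$.

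Finally I would compare with the rotation number column of Theorem \ref{mequalrho}. In the hyperbolic cases one has $\rho=\ell$ in $(1,h)$ and $\rho=\ell+1/2$ in $(2,h)$, so that $\rho\in\mathbb{Z}/2$; in the elliptic cases one has $\rho=\ell+\tau$ with $\tau\in(0,1/2)$ in $(2,e^-)$ and $\tau\in(1/2,1)$ in $(2,e^+)$, so that $\rho\notin\mathbb{Z}/2$. Combining the two equivalences obtained above, the system is stable if, and only if, $\rho\in\mathbb{R}\setminus\mathbb{Z}/2$, as claimed. The only point requiring a little care is the reading of the $2T$-nonresonance hypothesis as the exclusion of the eigenvalue $-1$; everything else is a direct inspection of the tables.
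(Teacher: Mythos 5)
Your proof is correct and follows essentially the same route as the paper, which states (without writing out details) that the corollary ``follows directly from the tables'' of Theorem \ref{mequalrho}: you rule out all parabolic rows via the two nonresonance hypotheses and then read off the stability and rotation-number columns. Your careful translation of $2T$-nonresonance into the exclusion of the eigenvalue $-1$ of $M(T)$ (using $M(2T)=M(T)^2$) is exactly the point the paper leaves implicit, and it is handled correctly.
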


The second corollary provides a relation between the rotation number and the winding number $\eta_{kT}$ of the solutions at time $kT$, with $k \geq 1$ an arbitrary integer, cf. \cite[Lemma 2.2]{WaQi21}.

\begin{corollary}\label{corrot}
For every integer $k \geq 1$ and for every integer $j$, the following hold true:
$$
\begin{array}{lll}
\rho & = \frac{j}{k}  \Longleftrightarrow j-1 < \eta_{kT}^- \leq j\leq \eta_{kT}^+ < j+1 \vspace{0.2cm}\\
\rho & < \frac{j}{k} \Longleftrightarrow \eta_{kT}\leq\eta_{kT}^+ < j \vspace{0.2cm} \\
\rho & > \frac{j}{k} \Longleftrightarrow j < \eta_{kT}^-\leq \eta_{kT}, 
\end{array}
$$
where $\eta_{kT}^-$ and $\eta_{kT}^+$ denote, respectively, the minimum and the maximum of the function $\eta_{kT}(\omega)$.
\end{corollary}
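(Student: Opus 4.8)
The plan is to prove the three stated equivalences at once, by a trichotomy argument. Fix $\rho$, $k$ and the integer $j$. On the left-hand side, exactly one of $\rho = j/k$, $\rho < j/k$, $\rho > j/k$ holds. On the right-hand side, I first observe that, by Theorem \ref{th2.2} applied at time $kT$, the interval $[\eta_{kT}^-,\eta_{kT}^+]$ always has length strictly less than $1$ (it is contained in $(\ell'-1/2,\ell'+1/2)$ in Case $1$ and in $(\ell',\ell'+1)$ in Case $2$, for a suitable $\ell'\in\Z$); consequently the condition $j-1<\eta_{kT}^-\leq j\leq\eta_{kT}^+<j+1$ is equivalent to the plain inclusion $\eta_{kT}^-\leq j\leq\eta_{kT}^+$, the two outer strict inequalities being automatic. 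Hence on the right-hand side, too, exactly one of $\eta_{kT}^-\leq j\leq\eta_{kT}^+$, $\eta_{kT}^+<j$, $j<\eta_{kT}^-$ holds, according to the position of $j$ relative to $[\eta_{kT}^-,\eta_{kT}^+]$. Since both sides are genuine trichotomies, it suffices to prove the three forward implications; the reverse ones then follow automatically.

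The first ingredient is the two-sided estimate
\[
\eta_{kT}^-\leq k\rho\leq\eta_{kT}^+ .
\]
To obtain it, I read the additivity identity used in the proof of (W5) of Lemma \ref{le3.1} for the base period $kT$, namely $\eta_{nkT}(\omega)=\eta_{(n-1)kT}(\theta(kT;\omega))+\eta_{kT}(\omega)$. Iterating gives $n\eta_{kT}^-\leq\eta_{nkT}(\omega)\leq n\eta_{kT}^+$ for every $n\geq 1$; dividing by $n$ and letting $n\to+\infty$, and using that $\eta_{nkT}(\omega)/n\to k\rho$ (because $\eta_{nkT}(\omega)/(nk)\to\rho$), yields the estimate.

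The second, and crucial, ingredient --- the step I expect to be the main obstacle --- is the following rigidity statement: if an integer $j$ belongs to $[\eta_{kT}^-,\eta_{kT}^+]$, then necessarily $k\rho=j$. I would argue as follows. Since $\omega\mapsto\eta_{kT}(\omega)$ is continuous and attains the values $\eta_{kT}^-$ and $\eta_{kT}^+$, the intermediate value theorem provides some $\omega_1$ with $\eta_{kT}(\omega_1)=j\in\Z$. Reading (W4) of Lemma \ref{le3.1} for the base period $kT$ --- which is legitimate, since the angular equation \eqref{2.11} is $kT$-periodic --- the equality $\eta_{kT}(\omega_1)=j\in\Z/2$ forces $\eta_{nkT}(\omega_1)=nj$ for every $n$, and therefore $k\rho=\lim_{n\to+\infty}\eta_{nkT}(\omega_1)/n=j$. (Equivalently, one may notice that in Case $2$ the window $[\eta_{kT}^-,\eta_{kT}^+]$ contains no integer, so that the presence of $j$ in it forces Case $1$, where an eigenvector direction of $M(kT)$ associated with a positive eigenvalue provides the required $\omega_1$.)

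With these two facts the forward implications are immediate. If $\rho=j/k$, then $k\rho=j$ and the two-sided estimate gives $\eta_{kT}^-\leq j\leq\eta_{kT}^+$, which by the length bound is the first right-hand condition. If $\rho<j/k$, suppose by contradiction that $\eta_{kT}^+\geq j$; since also $\eta_{kT}^-\leq k\rho<j$, the integer $j$ lies in $[\eta_{kT}^-,\eta_{kT}^+]$, so the rigidity statement forces $k\rho=j$, contradicting $k\rho<j$; hence $\eta_{kT}^+<j$. The case $\rho>j/k$ is entirely symmetric and gives $j<\eta_{kT}^-$. By the trichotomy argument of the first paragraph, the three equivalences follow.
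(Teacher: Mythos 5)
Your proof is correct, but it follows a genuinely different route from the paper's. The paper disposes of the case $k=1$ by simply reading the tables of Theorem \ref{mequalrho} (whose $\rho$-column encodes, case by case, the position of the rotation number relative to $[\eta_T^-,\eta_T^+]$), and then handles $k>1$ by observing that $\rho_k:=\lim_n \eta_{nkT}(\omega)/n$ is the rotation number of the same system viewed with base period $kT$, together with the identity $\rho_k=k\rho$; the whole argument is thus a two-line corollary of the heavy machinery of Section 3.2. You instead work directly at time $kT$ and never invoke Theorem \ref{mequalrho}: your two ingredients --- the sandwich $\eta_{kT}^-\leq k\rho\leq\eta_{kT}^+$, obtained from the additivity identity behind (W5) of Lemma \ref{le3.1} rebased at period $kT$, and the rigidity statement that an integer $j\in[\eta_{kT}^-,\eta_{kT}^+]$ forces $k\rho=j$, obtained from (W4) plus the definition of $\rho$ --- together with the trichotomy reduction (which correctly exploits the length bound $\eta_{kT}^+-\eta_{kT}^-<1$ from Proposition \ref{prop2.1}/Theorem \ref{th2.2}) re-derive from scratch exactly the $\rho$-versus-$\eta$ content of the tables that the paper takes as given. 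What the paper's route buys is brevity, given that Theorem \ref{mequalrho} has already been established; what your route buys is self-containedness and economy of hypotheses: you need only the classification of Proposition \ref{prop2.1}, the elementary iteration identities of Lemma \ref{le3.1}, and the existence and $\omega$-independence of the limit defining $\rho$, entirely bypassing the mean-index theorem $m=2\rho$ and its case-by-case proof. Both arguments do share the rebasing idea (treating $kT$ as the fundamental period), which is what makes the passage from $k=1$ to general $k$ painless in either presentation.
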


\begin{proof}
For $k = 1$, the result is a direct consequence of the tables in Theorem \ref{mequalrho}. For $k > 1$, we argue as follows. Let us define
$$
\rho_k = \lim_{n \to +\infty}\frac{\eta_{nkT}(\omega)}{n}.
$$
Clearly, $\rho_k$ can be thought as the rotation number of system \eqref{2.1}, using however $kT$ instead of $T$ as period.
Hence, from the relation between winding number and rotation number, we have that
$$
\begin{array}{lll}
\rho_k & = j  \Longleftrightarrow j-1 < \eta_{kT}^- \leq j \leq\eta_{kT}^+ < j+1 \vspace{0.2cm}\\
\rho_k & < j \Longleftrightarrow \eta_{kT}\leq\eta_{kT}^+ < j \vspace{0.2cm} \\
\rho_k & > j \Longleftrightarrow j < \eta_{kT}^-\leq \eta_{kT}.
\end{array}
$$
Since
$$
\rho_k = k \lim_{n' \to +\infty}\frac{\eta_{n'T}(\omega)}{n'} = k \rho,
$$
the conclusion follows.
\end{proof}

\section{Subharmonic solutions to nonlinear Hamiltonian systems}\label{sec4}

In this section, we state and prove our results dealing with the existence of subharmonic solutions to planar Hamiltonian systems.
More precisely, in Section \ref{sec4.1} we provide our main result (Theorem \ref{thmain}), together with some corollaries, and we sketch an application to a Lotka-Volterra system with periodic coefficients. Then, in Section \ref{sec4.2} we focus on planar Hamiltonian systems coming from semilinear second order equations of the type $u'' + f(t,u) = 0$ or quasilinear equations of the type
$(\varphi(u'))' + f(t,u) = 0$ with $u \mapsto -(\varphi(u'))'$ a Minkowksi-curvature type operator. 

\subsection{The main result}\label{sec4.1}

Let us consider the nonlinear planar Hamiltonian system
\begin{equation}\label{hsnon}
Jz' = \nabla_z H(t,z), \qquad z \in \mathbb{R}^2,
\end{equation}
where $J$ is the standard symplectic matrix (as defined in \eqref{Jstandard}) and the Hamiltonian function $H: \mathbb{R} \times \mathbb{R}^2 \to \mathbb{R}$ is continuous, $T$-periodic in the $t$-variable and differentiable in $z$ with
$\nabla_z H$ continuous with respect to $(t,z)$. Moreover, we always assume that the uniqueness for the solutions of the Cauchy problems associated with system \eqref{hsnon} is guaranteed (for instance, we can assume that $\nabla_z H$ is Lipschitz continuous with respect to $z$ uniformly in time, but any other condition ensuring the uniqueness could be used). Finally, we suppose that system \eqref{hsnon} can be linearized at zero and at infinity, meaning that the following two assumptions are satisfied:
\begin{itemize}[align=left]
\item[$(H_0)$] there exists $S_0 \in \mathcal{C}([0,T];\mathcal{M}_2(\mathbb{R}))$ such that
$$
\nabla_z H(t,z) = S_0(t) z  + o (\vert z \vert), \qquad z \to 0, \, \text{ uniformly in } t \in [0,T],
$$
that is, $\nabla_z H(t,0) \equiv 0$ and $H$ is twice differentiable with respect to $z$ at $z = 0$ with $\nabla_{zz}^2 H(t,0) = S_0(t)$,
\item[$(H_\infty)$] there exists $S_\infty \in \mathcal{C}([0,T];\mathcal{M}_2(\mathbb{R}))$ such that
$$
\nabla_z H(t,z) = S_\infty(t) z  + o (\vert z \vert), \qquad \vert z \vert \to +\infty, \, \text{ uniformly in } t \in [0,T],
$$
\end{itemize}
Notice that, since $H$ is $T$-periodic in time, the matrices $S_0$ and $S_\infty$ are $T$-periodic, as well. Hence, we are allowed to use
the index theory developed in Section \ref{sec2} and Section \ref{sec3} for the linear planar Hamiltonian systems
$Jz' = S_0(t)z$ and $Jz' = S_\infty(t)z$.

In this setting, we are interested in the existence of subharmonic solutions of system \eqref{hsnon}. Let us recall that by a \emph{subharmonic solution of order $k$}, with $k \geq 2$ an integer number, we mean a $kT$-periodic solution which is not $lT$-periodic for any $l = 1, \ldots, k-1$. Notice also that, whenever a subharmonic solution $z$ of order $k$ exists, the $k-1$ functions $t \mapsto z(t + l T)$ for $l = 1,\ldots,k-1$ turn out to be subharmonic solutions of order $k$, as well. For this reason, we say that two subharmonic solutions $z_1$, $z_2$ are non-equivalent if $z_1(t) \not\equiv z_2(t + lT)$ for every $l = 1,\ldots, k-1$. Finally, by convention, we agree to call subharmonic solution of order $1$ a $T$-periodic solution of \eqref{hsnon}.

Our main result is the following.

\begin{theorem}\label{thmain}
Let us suppose that assumptions $(H_0)$ and $(H_\infty)$ hold true and denote by $\rho_0$ and $\rho_\infty$ the rotation numbers
of the linear systems $Jz' = S_0(t)z$ and $Jz' = S_\infty(t)z$, respectively. Assume that
\begin{equation}\label{rhodifferent}
\rho_0 \neq \rho_\infty.
\end{equation}
Then, there exists an integer $k^* \geq 1$ such that system \eqref{hsnon} possesses subharmonic solutions of order $k$, for every $k \geq k^*$. Moreover, the number of non-equivalent subharmonic solutions of order $k$ goes to infinity as $k \to +\infty$.
\end{theorem}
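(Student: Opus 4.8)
The plan is to apply the Poincaré-Birkhoff fixed point theorem to the Poincaré map $\Psi_{kT}$ of system \eqref{hsnon} on a planar annulus $A_k = \{\, r_k \le \vert z \vert \le R_k \,\}$ centered at the origin, for every integer $k$ large enough. Since $\nabla_z H(t,0) \equiv 0$ by $(H_0)$, the origin is an equilibrium, so by uniqueness no nontrivial solution ever reaches it; consequently the clockwise polar angle $\theta(t)$ and the winding number over $[0,kT]$ are well defined along every solution with $z(0) \neq 0$. Moreover, the Hamiltonian structure makes $\Psi_{kT}$ area-preserving. The core of the argument is to produce a twist: I want the solutions departing from the inner circle $\vert z \vert = r_k$ to rotate, over the time $kT$, by an amount close to $k\rho_0$, and those departing from the outer circle $\vert z \vert = R_k$ to rotate by an amount close to $k\rho_\infty$. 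Since $\rho_0 \neq \rho_\infty$, these two amounts differ by $k\vert \rho_0 - \rho_\infty\vert \to +\infty$, so for $k$ large many integers lie strictly in between, each producing a pair of fixed points.

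To make the boundary estimates precise I would argue by rescaling. Near the origin, writing $w = z/r$ and using $(H_0)$, the rescaled flow converges, uniformly on $[0,kT]$ for each fixed $k$, to the flow of the linear system $Jw' = S_0(t)w$ as $r \to 0^+$; by continuous dependence, the winding number over $[0,kT]$ of a solution of \eqref{hsnon} starting on $\vert z \vert = r$ tends to the corresponding linear winding number $\eta_{kT}^{(0)}(\omega)$. Combining this with the uniform-in-$\omega$ inequality \eqref{rotglobal} for the system $Jz' = S_0(t)z$, there is $r_k > 0$ small enough that every solution of \eqref{hsnon} starting on $\vert z \vert = r_k$ has winding number over $[0,kT]$ within distance $1$ of $k\rho_0$ (up to an arbitrarily small nonlinear error absorbed into the choice of $r_k$). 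Symmetrically, writing $w = z/R$ and using $(H_\infty)$, for $R$ large the rescaled flow is $\mathcal{C}^0$-close on $[0,kT]$ to that of $Jw' = S_\infty(t)w$, so there is $R_k$ large with the analogous estimate around $k\rho_\infty$. I expect this step, and particularly the control at infinity, to be the main obstacle: one must guarantee both that the rescaled equation is genuinely a small perturbation of the linear one over the whole interval $[0,kT]$ and that the solution never approaches the origin, so that the polar angle keeps making sense.

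Granting the two boundary estimates, assume without loss of generality $\rho_0 < \rho_\infty$. Every integer $j$ with
\[
k\rho_0 + 1 < j < k\rho_\infty - 1
\]
then makes the winding number over $[0,kT]$ of any inner solution strictly less than $j$ and that of any outer solution strictly greater than $j$, which is exactly the boundary twist condition required by the Poincaré-Birkhoff theorem (in the version used in \cite{MRZ}), applied on $A_k$ to the area-preserving map $\Psi_{kT}$. For each such $j$ it yields at least two fixed points of $\Psi_{kT}$ whose solution has winding number exactly $j$ over one period $[0,kT]$; these are $kT$-periodic solutions of \eqref{hsnon}. The number of admissible integers is $\lfloor k\rho_\infty - 1 \rfloor - \lceil k\rho_0 + 1 \rceil + 1 \sim k\vert \rho_\infty - \rho_0 \vert$, hence positive once $k\vert \rho_\infty - \rho_0 \vert > 2$, which fixes the threshold $k^*$. (Here the relation between the rotation numbers $\rho_0,\rho_\infty$ and the bounds on $\eta_{kT}$ is precisely the content of Corollary \ref{corrot}.)

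Finally, to ensure these solutions are genuine subharmonics of order $k$, I would keep only those with $\gcd(j,k) = 1$. Indeed, if such a solution were $lT$-periodic for some $1 \le l < k$, it would be $dT$-periodic with $d = \gcd(l,k)$ a proper divisor of $k$, and, since the angle increases by the same amount over each minimal period, its winding number over $[0,kT]$ would equal $(k/d)$ times its winding over $[0,dT]$, forcing $(k/d) \mid j$ with $k/d > 1$, contrary to $\gcd(j,k) = 1$; hence its minimal period is $kT$. Moreover, solutions with distinct values of $j$ have distinct winding numbers over $[0,kT]$, a quantity invariant under the time-shifts $z(\cdot) \mapsto z(\cdot + lT)$, so they are pairwise non-equivalent. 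It then remains to count the integers coprime to $k$ in an interval of length $\sim k\vert \rho_\infty - \rho_0 \vert$: by the standard sieve estimate their number is at least $c\,\vert \rho_\infty - \rho_0 \vert\,\phi(k)$ minus an error bounded by the number of squarefree divisors of $k$, and since $\phi(k) \to +\infty$ while that error is $o(k^\varepsilon)$, this count tends to $+\infty$. This yields both the existence of subharmonics of order $k$ for every $k \ge k^*$ and the divergence of the number of non-equivalent ones.
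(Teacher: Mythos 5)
Your proposal is correct and follows the same skeleton as the paper's proof: the Poincar\'e--Birkhoff theorem applied to the Poincar\'e map at time $kT$, a boundary twist obtained by comparing small and large solutions of \eqref{hsnon} with the linearizations at zero and infinity, the restriction to integers $j$ coprime with $k$ (with $j \neq 0$) to force minimality of the period, and a number-theoretic count to get the divergence of the multiplicity. Two sub-steps, however, are handled by genuinely different means, both valid. First, for the twist you argue by rescaling ($w = z/r$, $w = z/R$) plus continuous dependence, and then locate the nonlinear winding numbers within distance $1$ of $k\rho_0$ and $k\rho_\infty$ via \eqref{rotglobal}; this forces the restriction $k\rho_0 + 1 < j < k\rho_\infty - 1$ and requires the small $\varepsilon$-management you allude to (the admissible $j$ for fixed $k$ are finitely many, so the error in the choice of $r_k, R_k$ can indeed be absorbed). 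The paper instead combines \cite[Lemma 3]{MRZ} with Corollary \ref{corrot} ($\rho > j/k \Leftrightarrow \eta_{kT}^- > j$, and symmetrically), which compares directly against the integer $j$ with strict inequalities and yields the sharper admissible range \eqref{nodal}, namely all $j/k$ strictly between $\rho_0$ and $\rho_\infty$; asymptotically the two ranges have the same size, so your weaker version still proves the theorem. Second, to count integers coprime with $k$ in the admissible window you use a sieve estimate (count $\sim \vert\rho_\infty-\rho_0\vert\,\varphi(k) + O(2^{\omega(k)})$, and $\varphi(k)$ dominates), whereas the paper follows \cite[Th. 2.3]{DIZ93}: it partitions $[\rho_\infty,\rho_0]$ into $2r$ subintervals, uses the Prime Number Theorem to place a prime in each rescaled subinterval, and shows that two primes in adjacent subintervals cannot both divide $k$ once $k$ is large. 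Your sieve argument is arguably more direct and gives a quantitative lower bound on the multiplicity. Two minor points to tighten: global existence of solutions (needed for $\Psi_{kT}$ to be defined on all of the annulus) should be stated explicitly — it follows from the at most linear growth of $\nabla_z H$ guaranteed by $(H_\infty)$ — and, since your annulus $A_k$ is not invariant under $\Psi_{kT}$, you should make sure the version of Poincar\'e--Birkhoff you invoke covers non-invariant annuli, as the paper does by relying on \cite{Bo11}.
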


\begin{remark}
\rm
As it will be clear from the proof, the following nodal characterization can be provided: there are two non-equivalent subharmonic solutions of order $k$ and winding number $j$ in the interval $[0,kT]$ whenever $k,j$ are coprime (with $j \neq 0$; by coprime here we mean that $\gcd(k,\vert j \vert ) = 1$) and
\begin{equation}\label{nodal}
\frac{j}{k} \in  \left(\min\{\rho_0,\rho_\infty\},\max\{\rho_0,\rho_\infty\}\right).
\end{equation}
In particular, let us notice that the choice $k = 1$ is admissible if $\vert \rho_\infty - \rho_0 \vert >1$: hence, in this case system \eqref{hsnon} possesses at least two $T$-periodic solutions. By a careful inspection of the proof, one also sees that it is possible to take $j = 0$ when $\rho_0$ and $\rho_\infty$ have opposite sign: in this case, for $k=1$ one obtains the existence of two $T$-periodic solutions with zero winding number, but it cannot be guaranteed that other choices of $k \geq 2$ lead to subharmonic solutions. 
\end{remark}

Theorem \ref{thmain} can be compared with \cite[Th. 1.1]{WaQi21}, where, however, a slightly different situation is considered. 

In view of Theorem \ref{mequalrho} giving the relation between rotation number and mean Conley-Zehnder index, we immediately see that 
assumption \eqref{rhodifferent} of Theorem \ref{thmain} can be equivalently written as $m_0 \neq m_\infty$.
Moreover, in view of the relation between rotation number and Conley-Zehnder index (at time $T$) we also easily obtain the following corollary of Theorem \ref{thmain}. Notice that this result provides a substantial generalization of \cite[Cor. 4.3]{Abb}, since, on one hand, we do not impose any non-resonance assumptions and, on the other hand, we can provide a much sharper abundance of subharmonic solutions.

\begin{corollary}\label{corcz}
Let us suppose that assumptions $(H_0)$ and $(H_\infty)$ hold true and denote by $i_0$ and $i_\infty$ the Conley-Zehnder indices, at time $T$, of the linear systems $Jz' = S_0(t)z$ and $Jz' = S_\infty(t)z$, respectively. 
Assume that
$$
i_0 \neq i_\infty.
$$
Then, there exists an integer $k^* \geq 1$ such that system \eqref{hsnon} possesses subharmonic solutions of order $k$, for every $k \geq k^*$. Moreover, the number of non-equivalent subharmonic solutions of order $k$ goes to infinity as $k \to +\infty$.
\end{corollary}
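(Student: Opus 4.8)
The plan is to deduce Corollary \ref{corcz} directly from Theorem \ref{thmain}. Since Theorem \ref{thmain} already delivers the desired conclusion — the existence of subharmonics of every order $k \geq k^*$ together with the unbounded count of non-equivalent solutions — under the hypothesis $\rho_0 \neq \rho_\infty$, the only thing I need to verify is that the assumption $i_0 \neq i_\infty$ on the Conley-Zehnder indices at time $T$ forces $\rho_0 \neq \rho_\infty$ on the rotation numbers of the two linearizations $Jz' = S_0(t)z$ and $Jz' = S_\infty(t)z$.

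To this end, the key observation is that, for any linear Hamiltonian system of the form \eqref{2.1}, the Conley-Zehnder index $i_T$ is a function of the rotation number $\rho$ alone. This can be read off directly from the tables in Theorem \ref{mequalrho} (equivalently, by combining the classification of Theorem \ref{th2.2} with the identity $m = 2\rho$): whenever $2\rho \in \mathbb{Z}$ — the hyperbolic and parabolic cases, including the boundary half-integer values $\rho = \ell + 1/2$ — one has $i_T = 2\rho$, while in the genuinely elliptic cases, where $2\rho \notin \mathbb{Z}$, one has $i_T = 2\lfloor \rho \rfloor + 1$. In either situation the value of $i_T$ is uniquely determined by $\rho$. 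The only point deserving care is precisely the elliptic case, where the naive guess $i_T = 2\rho$ fails because $i_T$ is an integer whereas $2\rho$ is not; nonetheless $i_T$ remains a single-valued function of $\rho$, and that is all that is needed.

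Having established that $\rho \mapsto i_T$ is well defined, the contrapositive is immediate: if the two linearizations shared the same rotation number, $\rho_0 = \rho_\infty$, they would necessarily have the same Conley-Zehnder index, $i_0 = i_\infty$, against the standing hypothesis. Hence $i_0 \neq i_\infty$ yields $\rho_0 \neq \rho_\infty$, and applying Theorem \ref{thmain} produces subharmonic solutions of order $k$ for every $k \geq k^*$, with the number of non-equivalent solutions tending to infinity as $k \to +\infty$. There is no genuine obstacle in this argument; its entire content is the elementary but essential remark that the rotation number refines the information carried by the time-$T$ index, so that distinct indices cannot arise from a common rotation number.
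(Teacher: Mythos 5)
Your proof is correct and follows exactly the route the paper intends: the paper derives Corollary \ref{corcz} from Theorem \ref{thmain} by invoking ``the relation between rotation number and Conley-Zehnder index (at time $T$)'', which is precisely your observation that the tables of Theorem \ref{mequalrho} make $i_T$ a single-valued function of $\rho$ (namely $i_T = 2\rho$ when $2\rho \in \mathbb{Z}$ and $i_T = 2\lfloor\rho\rfloor+1$ otherwise), so that $i_0 \neq i_\infty$ forces $\rho_0 \neq \rho_\infty$. Your write-up simply makes explicit what the paper leaves as an easy remark, and the paper itself confirms this implication later (after Theorem \ref{thsem}) when it notes that $i_{\bar u} \neq i_\infty$ implies $\rho_{\bar u} \neq \rho_\infty$.
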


We now provide the proof of Theorem \ref{thmain}.

\begin{proof}[Proof of Theorem \ref{thmain}]
According to the strategy described in \cite{Bo11}, we are going to apply the Poincar\'e-Birkhoff fixed point theorem (in the version for non-invariant annuli) to the Poincar\'e map at time $kT$ of system \eqref{hsnon}, that is
$$
\mathbb{R}^2 \ni z_0  \mapsto \Phi_{kT}(z_0) := z(kT;z_0) \in \mathbb{R}^2,
$$
where $z(\cdot;z_0)$ is the (unique) solution of \eqref{hsnon} satisfying the initial condition $z(0;z_0) = z_0$. 
Notice that all the solutions are globally defined in $\mathbb{R}$, since, due to $(H_\infty)$, the Hamiltonian vector field $J\nabla_z H(t,z)$ grows at most linearly w.r.t. $z$. Hence, the map $\Phi_{kT}$ is well-defined and, as well known, it turns out to be an homeomorphism of the plane onto itself; moreover, by the Liouville theorem, it is area-preserving. We also observe that, in view of $(H_0)$, one has $z(t;z_0) \neq 0$ for every $t \in \mathbb{R}$ whenever $z_0 \neq 0$; hence, $\Phi_{kT}(z_0) = 0$ if, and only if, $z_0 = 0$ and we can define the winding number around the origin of any non-trivial solution, namely
$$
\textnormal{Rot}_k(z_0) = \frac{\theta(kT;z_0) - \theta(0;z_0)}{2\pi}
$$
where the continuous function $\theta(t;z_0)$ is a (clockwise) argument for the solution $z(t;z_0)$, that is
$z(t;z_0) = \vert z(t;z_0) \vert e^{-i\theta(t;z_0)}$.

By the Poincar\'e-Birkhoff theorem (see again \cite{Bo11} for the details) the existence of two non-equivalent $kT$-periodic solutions of \eqref{hsnon} having winding number $j$ (with $j$ a fixed integer number) on the interval $[0,kT]$ is guaranteed if there exist 
$\hat{r}_{k,j} > 0$ and $\check{r}_{k,j} > 0$, with $\hat{r}_{k,j} \neq \check{r}_{k,j}$ such that the following twist condition holds true:
\begin{itemize}
\item[(T1)] for every $z_0 \in \mathbb{R}^2$ with $\vert z_0 \vert = \hat{r}_{k,j}$ one has
$\textnormal{Rot}_k (z_0) > j,$
\item[(T2)] for every $z_0 \in \mathbb{R}^2$ with $\vert z_0 \vert = \check{r}_{k,j}$ one has
$\textnormal{Rot}_k (z_0) < j.$
\end{itemize}
Moreover, by standard arguments, the period $kT$ can be checked to be the minimal one (in the class of the integer multiples of $T$) when $k$ and $j$ are coprime numbers and $j \neq 0$; thus, in such a case the $kT$-periodic solutions found are guaranteed to be subharmonics of order $k$.

To prove that the above twist condition is satisfied, we use a comparison argument with the solutions of the linearizations of the nonlinear system \ref{hsnon} at zero and at infinity. Indeed, by \cite[Lemma 3]{MRZ} (at time $kT$), conditions (T1) and (T2) are satisfied if the solutions of the linear system $Jz' = S_0(t)z$ have winding number, on the time $[0,kT]$, greater than $j$, and the solutions of the linear system $Jz' = S_\infty(t)z$ have winding number, on the time $[0,kT]$, less than $j$, or viceversa (with $\hat{r}_{k,j} < \check{r}_{k,j}$ in the former case, and $\hat{r}_{k,j} > \check{r}_{k,j}$ in the latter one). 

From now on, to fix the ideas we suppose that $\rho_\infty<\rho_0 $. By Corollary \ref{corrot}, the solutions of the linear system $Jz' = S_0(t)z$ have winding number at time $kT$ greater than $j$ if, and only if,
$$
\rho_0 > \frac{j}{k}
$$
while the solutions of the linear system $Jz' = S_\infty(t)z$ have winding number at time $kT$ less than $j$ if, and only if,
$$
\rho_\infty < \frac{j}{k}.
$$
Summarizing, we have thus proved that if $j$ is a non-null integer satisfying condition \eqref{nodal} and such that $k$ and $\vert j \vert$ are coprime, then there exists two non-equivalent subharmonics of order $k$ of system \eqref{hsnon} having winding number equal to $j$.

To conclude the proof, we thus need to show that for every integer $r \geq 1$, there exists an integer $k^*(r) \geq 1$ such that
for every $k \geq k^*(r)$ there exist at least $r$ non-null integers $j_1,\ldots,j_r$, all coprime with $k$, satisfying condition \eqref{nodal}. 
The basic idea of this proof appears in \cite[Th. 2.3]{DIZ93} and is also sketched in \cite[Th. 5.5]{BoGa11} but we include it here for the sake of completeness, assuming to fix the ideas that $0\leq\rho_\infty<\rho_0 $ (so that $j_1,\ldots,j_r \geq 0$; the argument
in the other cases is easily adapted, cf. the discussion  before Corollary \ref{corzero}).

First, by the Prime Number theorem, for every $a,b \geq 0$ with $a,b$ there exists an integer $\widetilde{k} = \widetilde{k}(a,b) \geq 1$ such that for $k\geq \widetilde{k}$ 
\begin{equation}
\label{prime}
a k<m(k)<bk
\end{equation}
where $m(k)$ is a prime integer, cf. \cite[formula (2.10)]{DIZ93}. Now, to find $r$ integers coprime with $k$ satisfying condition \eqref{nodal}, we consider a partition of the real interval $[\rho_{\infty},\rho_0]$ in $2r$ intervals by taking $s_i\in[\rho_\infty,\rho_0]$, for $i\in\{0,1,\dots,2r\}$, such that 
\[
\rho_\infty=s_0<s_1<s_2<\cdots<s_{2r-1}<s_{2r}=\rho_0. 
\]
Then, by \eqref{prime}, there is an integer $\hat k = \hat k(s_0,\ldots,s_{2r}) \geq 1$ such that for all $k\geq \hat k$ and for all $i\in\{0,1,\dots,r-1\}$ the next inequalities
\begin{equation}
\label{prime2}
ks_{2i}<j_{i,1}(k)<ks_{2i+1}\quad\hbox{and}\quad ks_{2i+1}<j_{i,2}(k)<ks_{2i+2}
\end{equation} 
hold for $j_{i,1}(k)$ and $j_{i,2}(k)$ suitable prime integers. We prove that, for all $i\in\{0,1,\dots,r-1\}$. either $j_{i,1}(k)$ or $j_{i,2}(k)$ is coprime with $k$, provided  
that
\begin{equation}\label{defSigma}
k > \Sigma:= \max_{i=0,\ldots,r-1} \frac{1}{s_{2i}s_{2i+1}}.
\end{equation}
Indeed, assume by contradiction that $j_{i,1}(k)$ and $j_{i,2}(k)$ are prime factors of $k$. Then, for some integer $k_1 \geq 1$ we can write $k$ as
\begin{equation}
\label{prime3}
k=k_1j_{i,1}(k)j_{i,2}(k)\geq j_{i,1}(k)j_{i,2}(k).
\end{equation}
Hence, by \eqref{prime2} and \eqref{prime3}, $k\geq k^2s_{2i}s_{2i+1}$, contradicting \eqref{defSigma}.
Therefore, 
if we take
$$
k^* = \max\left\{\hat k, \lfloor \Sigma \rfloor + 1 \right\},
$$
for every integer $k \geq k^*$, we find for every $i\in\{0,1,\dots,r-1\}$ an integer $j_i = j_i(k)$ coprime with $k$ and such that
$$
ks_{2i}<j_i<ks_{2(i+1)}.
$$
Hence, we have found $r$ integers coprime with $k$ and satisfying condition \eqref{nodal}. This concludes the proof.
\end{proof}

We now observe that the nodal characterization of the subharmonic solutions provided by \eqref{nodal} can be made more explicit in the case when
$$
\left[\min\{\rho_0,\rho_\infty\},\max\{\rho_0,\rho_\infty\}\right] \cap \mathbb{Z} \neq \emptyset.
$$
To explain this claim, we focus, to fix the ideas, on the case $0 \leq \rho_\infty < \rho_0$ and we thus assume that, for some integer $\ell \geq 0$, it holds that 
$$
\rho_\infty \leq \ell < \rho_0
$$
(the case $\rho_\infty < \ell \leq \rho_0$ is analogous).  
Then, condition \eqref{nodal} is certainly satisfied if $j$ is a (positive) integer satisfying 
$$
k\ell < j < k\rho_0
$$
and it is thus quite natural to look for $j$ of the form $j = k \ell + j'$. Observing that
$$
\gcd(k,j) = 1 \;\Longleftrightarrow \; \gcd(k,j') = 1
$$
we are thus led to find the integers $j'$ coprime with $k$ and such that
\begin{equation}\label{condjprime}
0 < j' < k(\rho_0 - \ell).
\end{equation}
In particular, defining $k^*$ as the least integer greater or equal to $1$ such that $k^* (\rho_0 - \ell) \geq 2$, it is always possible to take $j' = 1$ and we thus find, for every integer $k \geq k^*$, at least two subharmonic solutions of order $k$ with winding number equal to 
$k\ell + 1$. More in general, the number of integers $j'$ satisfying condition \eqref{condjprime} is given by the Euler's function
$\varphi(\left \lfloor{ k(\rho_0 - \ell)}\right \rfloor)$ and, clearly, diverges as $k \to+\infty$. We can thus find subharmonics of order $k$ and winding number
$$
k\ell + 1 < k\ell + j'_2 < \cdots k \ell + j'_{r_k}
$$
where $r_k = \varphi(\left \lfloor{ k(\rho_0 - \ell)}\right \rfloor) \to +\infty$ as $k \to +\infty$.

The case when $\rho_0< \rho_\infty \leq 0$ is of course completely analogous (just considering rotations in the counterclockwise sense), 
as well as the cases $0 \leq \rho_0 < \rho_\infty$ and $\rho_\infty < \rho_0 \leq 0$. Finally, the case when $\rho_0$ and $\rho_\infty$ have different sign gives rise to an even richer multiplicity, since the above argument can be applied both on the interval $[\min\{\rho_0,\rho_\infty\},0]$ and $[0,\max\{\rho_0,\rho_\infty\}]$ leading to subharmonic solutions rotating clockwise and to subharmonic solutions rotating counterclockwise.  

To conclude this discussion, we state an explicit corollary dealing with a situation which is very useful in the applications.

\begin{corollary}\label{corzero}
Let us suppose that assumptions $(H_0)$ and $(H_\infty)$ hold true and denote by $\rho_0$ and $\rho_\infty$ the rotation numbers
of the linear systems $Jz' = S_0(t)z$ and $Jz' = S_\infty(t)z$, respectively. Assume that
\begin{equation}\label{rhodifferent2}
\rho_\infty = 0 \neq \rho_0.
\end{equation}
Then, there exists an integer $k^* \geq 1$ such that system \eqref{hsnon} possesses subharmonic solutions of order $k$, for every $k \geq k^*$. More precisely, for every $k \geq k^*$ system \eqref{hsnon} possesses at least two non-equivalent subharmonic solutions of order $k$ and winding number $\textnormal{sgn}(\rho_0) j'$, for every integer $j'$ coprime with $k$ and satisfying $1 \leq j' < k \vert \rho_0 \vert$.
\end{corollary}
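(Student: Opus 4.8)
The plan is to deduce this statement as a direct specialization of Theorem \ref{thmain} to the case in which one of the two rotation numbers vanishes. Since $\rho_\infty = 0 \neq \rho_0$, assumption \eqref{rhodifferent} holds, so Theorem \ref{thmain} already guarantees the existence of subharmonic solutions of order $k$ for every $k$ large enough; it thus remains only to make the nodal information explicit and to verify the stated range for the admissible winding numbers. Accordingly, I would not reprove the Poincar\'e--Birkhoff machinery, but rather read off the conclusion from the nodal characterization \eqref{nodal} obtained in the course of the proof of Theorem \ref{thmain}.

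First I would recall that, by that characterization, for any nonzero integer $j$ with $\gcd(k,\vert j\vert) = 1$ and
$$
\frac{j}{k} \in \left(\min\{\rho_0,\rho_\infty\},\max\{\rho_0,\rho_\infty\}\right),
$$
system \eqref{hsnon} admits two non-equivalent subharmonic solutions of order $k$ with winding number $j$ on $[0,kT]$. Specializing $\rho_\infty = 0$, the admissible interval for $j/k$ becomes $(0,\rho_0)$ when $\rho_0 > 0$ and $(\rho_0,0)$ when $\rho_0 < 0$; in both cases it is natural to set $j = \textnormal{sgn}(\rho_0)\, j'$ with $j' \geq 1$, so that the membership condition on $j/k$ is equivalent to $0 < j' < k\vert \rho_0\vert$ and, since $\vert j\vert = j'$, the coprimality condition becomes $\gcd(k,j') = 1$. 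This matches exactly the claim, the sign bookkeeping for $\rho_0 < 0$ being handled by replacing clockwise with counterclockwise rotations, precisely as indicated in the paragraph preceding the corollary (this corresponds to fixing $\ell = 0$ in that discussion).

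Finally I would exhibit the threshold $k^*$. The choice $j' = 1$ is coprime with every $k$ and satisfies $0 < 1 < k\vert \rho_0\vert$ as soon as $k > 1/\vert \rho_0\vert$; taking $k^*$ to be the least integer with $k^*\vert \rho_0\vert \geq 2$ guarantees that for every $k \geq k^*$ the winding number $j = \textnormal{sgn}(\rho_0)$ is realized, yielding two non-equivalent subharmonics of order $k$, while the full count of admissible $j'$ is $\varphi(\lfloor k\vert\rho_0\vert\rfloor) \to +\infty$. I do not expect a genuine obstacle here, since the corollary is purely a restatement of the already established general result; the only points requiring care are the sign convention when $\rho_0 < 0$ and the strictness of the inequalities cutting out the admissible interval, which is exactly what the $\geq 2$ threshold in the definition of $k^*$ safeguards.
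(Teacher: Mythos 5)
Your proposal is correct and takes essentially the same approach as the paper: Corollary \ref{corzero} is obtained there, with no separate proof, precisely by specializing the nodal characterization \eqref{nodal} (from the remark and discussion following Theorem \ref{thmain}) to $\ell = 0$, with the same threshold $k^*$ (the least integer with $k^* \vert \rho_0 \vert \geq 2$), the same substitution $j = \textnormal{sgn}(\rho_0)\, j'$ with $\gcd(k,j')=1$, and the same count via Euler's function. The sign bookkeeping for $\rho_0 < 0$ also matches the paper's remark about passing to counterclockwise rotations.
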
 

Notice that, by Theorem \ref{mequalrho}, assumption \eqref{rhodifferent2} can be equivalently written in terms of Conley-Zehnder indices as
$$
i_\infty = 0 \neq i_0.
$$
We conclude the section with the following observation, which can be useful 
to establish some variants of the results just exposed. As it is clear from the proof of Theorem \ref{thmain}, the assumptions at zero and at infinity (that is, $(H_0)$ and $(H_\infty)$ and the related information on $\rho_0$ and $\rho_\infty$) have the role of dictating, through a comparison with the associated linearized systems, the angular behavior of the ``small'' and ``large'' solutions of the nonlinear system \eqref{hsnon}, respectively. As a consequence, all the results presented remain true if one of such conditions, at zero or at infinity, is replaced by an alternative one which ensures the same angular behavior for the solutions.

In particular, in some concrete applications system \eqref{hsnon} cannot be linearized at infinity in the sense of condition $(H_\infty)$ but, in spite of this, it is still possible to prove that the solutions are globally defined and that the following rotational property holds true:
\begin{itemize}[align=left]
\item[$(H_\textrm{sub})$] for any integer $k \geq 1$, there exists a radius $R_k > 0$ such that,
for every $\vert z_0 \vert = R_k$,  $$|\textnormal{Rot}_k(z_0) | < 1.$$
\end{itemize}
This is precisely the rotational property provided by the condition $\rho_\infty = 0$ for systems which can be linearized at infinity; however, 
as discussed in \cite{Bo11}, it can be
ensured also in cases when system \eqref{hsnon}, even if non-linearizable at infinity, satisfies a suitable sublinearity condition in a region of the plane (see also \cite{FoTo19} for related results, in the context of weakly coupled higher-dimensional systems). 
Hence, for instance, a variant of Corollary \ref{corzero} can be given, by keeping assumption $\rho_0 > 0$ in order to ensure a positive winding number for the small solutions and replacing assumption $\rho_\infty = 0$ with any other one implying $(H_\textrm{sub})$. To better explain this observation, we present the following application.

\begin{example}
\rm 
As an example, we consider the following Lotka-Volterra system
\begin{equation}\label{lotv}
\left\{
\begin{array}{ll}
p' = p (a(t)-b(t)q) \\
q' = q (-c(t) + d(t)p),
\end{array}
\right.
\qquad (p,q) \in (\mathbb{R}^+)^2,
\end{equation}
where $a,b,c,d: \mathbb{R} \to \mathbb{R}$ are continuous and $T$-periodic functions, with 
\begin{equation}\label{coex}
\int_0^T a(t)\,dt > 0, \qquad \int_0^T c(t)\,dt > 0
\end{equation}
and $b(t)$ and $d(t)$ \emph{non-negative} and non-identically equal to zero. As well known (see \cite{DiZa92} or \cite[Appendix]{LMZ20}), condition \eqref{coex} turns out to be necessary and sufficient for the existence of a $T$-periodic solution $(p^*(t),q^*(t))$ of system \eqref{lotv}; through the change of variables
$$
x = \log \left( \frac{q}{q^*(t)}\right), \qquad y = \log \left( \frac{p}{p^*(t)}\right), 
$$
and denoting $\a(t)=b(t)q^*(t)$ and $\b(t)=d(t)p^*(t)$ we are thus led to the planar system
\begin{equation}\label{lotv2}
\left\{
\begin{array}{ll}
x' = \b(t)(e^y-1) \\
y' = \a(t)(1-e^x),
\end{array}
\right.
\end{equation}
which is of the form \eqref{hsnon} with 
$$
H(t,x,y) = \a(t)(e^x - x -1) + \b(t)(e^y - y +1).
$$
As already proved in previous papers (see, for instance, \cite{DiZa92} and \cite[Sec. 2]{LMZ20}), solutions of the above system are globally defined and, moreover, assumption $(H_\textrm{sub})$ holds true.

On the other hand, using the results of Sections \ref{sec2} and \ref{sec3} we can easily verify that, for the linearization at zero of system \eqref{lotv2}, that is
\begin{equation}\label{lotv3}
\left\{
\begin{array}{ll}
x' = \b(t)y \\
y' = -\a(t)x,
\end{array}
\right.
\end{equation}
it holds that $\rho_0 \neq 0$. Indeed, assume by contradiction that $\rho_0 = 0$; then 
by Theorem \ref{mequalrho} and Proposition \ref{prop2.1} there exists a non-trivial $T$-periodic solution
$(\bar x(t),\bar y(t))$ of \eqref{lotv3} with winding number $\eta_T=0$.
Writing this solution in clockwise polar coordinates
$(\bar x(t),\bar y(t)) = \bar\rho(t) e^{-i\bar\theta(t)},$ we have 
$$
\bar\theta'(t) = \frac{\a(t) \bar x(t)^2 + \b(t) \bar y(t)^2}{\bar x(t)^2 + \bar y(t)^2} \geq 0, \quad \mbox{ for every } t \in [0,T].
$$
Then, since $\eta_T=0$, it must be $\bar\theta(t) = \bar\theta(0)$ for every $t \in [0,T]$: that is, 
$(\bar x(t),\bar y(t))$ moves periodically on the half-line $\{\theta = \bar\theta(0)\}$. Therefore, $\bar x'(t)$ and $\bar y'(t)$ must have constant sign on $[0,T]$ and the only possibility for $(\bar x(t),\bar y(t))$ to be $T$-periodic is that it is constant. However, this can be excluded since $\a(t)$ and $\b(t)$ are not identically equal to zero. Incidentally, let us notice that this proves that $\rho_0 > 0$.

Hence, the above described variant of Corollary \ref{corzero} can be applied, thus ensuring that system \eqref{lotv2} admits subharmonic solutions of order $k$, for any $k \geq k^*$. In this way, we obtain a slight improvement of the result exposed in \cite[Sec. 5]{Bo11}, where the functions $b(t)$ and $d(t)$ are assumed to be, instead, strictly positive. 
For some sharp estimates of the minimal order $k^*$ of these subharmonic solutions, depending on the coefficient supports, we refer to the recent papers \cite{LM20,LMZ20,LMZ21}.
\end{example}

\subsection{Some corollaries for second order ODEs}\label{sec4.2}

In this section, we discuss possible applications of the results of Section \ref{sec4.1} to planar Hamiltonian systems coming from some classes second order scalar differential equations.

At first, we focus on semilinear equations of the type
\begin{equation}\label{eqsem}
u'' + f(t,u) = 0, \qquad u \in \mathbb{R},
\end{equation}
where $f: \mathbb{R} \times \mathbb{R} \to \mathbb{R}$ is a locally Lipschitz continuous function, $T$-periodic in the first variable
and satisfying the following assumption at infinity:
\begin{itemize}
\item[$(f_\infty)$] there exists a continuous function $q_\infty: [0,T] \to \mathbb{R}$ such that
$$
\lim_{\vert u \vert \to +\infty} \frac{f(t,u)}{u} = q_\infty(t) \, \text{ uniformly in } t \in [0,T],
$$
\end{itemize}
and we denote by $\rho_\infty$ the rotation number of the linear equation $u'' + q_\infty(t)u = 0$, interpreted as a planar Hamiltonian system as discussed in Section \ref{secMorse}.

Moreover, we adopt the following notation: if $\bar{u}(t)$ is a $T$-periodic solution of equation\eqref{eqsem} such that
$\partial_u f(t,\bar{u}(t))$ is well-defined, by $\rho_{\bar{u}}$ we mean the rotation number of the linear equation 
$u'' + \partial_u f(t,\bar{u}(t))u = 0$. 

Incidentally, let us observe that $\rho_{\bar{u}} \geq 0$ and $\rho_{\infty} \geq 0$. This is a consequence of the well-known fact that the 
the winding number of the solutions of linear equations of the type $u'' + q(t)u = 0$ is always greater than $-1/2$, since, due to $u' = v$, the $v$-axis can be crossed only in the clockwise sense. 

With this in mind, the following result follows in a straightforward manner from Theorem \ref{thmain} of Section \ref{sec4.1}. 

\begin{theorem}\label{thsem}
Let us suppose that assumption $(f_\infty)$ holds true. Moreover, assume that there exists a $T$-periodic solution $\bar{u}(t)$ of equation \eqref{eqsem} such that
$$
\rho_{\bar{u}} \neq \rho_{\infty}.
$$
Then, there exists an integer $k^* \geq 1$ such that equation \eqref{eqsem} possesses subharmonic solutions of order $k$, for every $k \geq k^*$. Moreover, the number of non-equivalent subharmonic solutions of order $k$ goes to infinity as $k \to +\infty$.
\end{theorem}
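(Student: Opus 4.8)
The plan is to deduce Theorem~\ref{thsem} from Theorem~\ref{thmain} by translating the given $T$-periodic solution $\bar u$ to the origin. First I would set $w = u - \bar u(t)$ and observe that, since $\bar u'' + f(t,\bar u(t)) = 0$, a function $u$ solves \eqref{eqsem} if and only if $w$ solves
$$
w'' + g(t,w) = 0, \qquad g(t,w) := f(t,\bar u(t)+w) - f(t,\bar u(t)).
$$
This equation is of the form treated in Section~\ref{secMorse}, once written as the planar Hamiltonian system $w' = v$, $v' = -g(t,w)$ (equivalently $Jz' = \nabla_z H(t,z)$ with $z = (w,v)$ and $H(t,w,v) = \tfrac12 v^2 + \int_0^w g(t,s)\,ds$). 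Since $f$ is locally Lipschitz and $T$-periodic in $t$, the same holds for $g$, so uniqueness for the Cauchy problems is guaranteed and $g(t,0)\equiv 0$, i.e. $w \equiv 0$ is the solution corresponding to $u = \bar u$.

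Next I would verify the two linearization hypotheses. At zero, the very fact that $\rho_{\bar u}$ is defined provides the continuous coefficient $\partial_u f(t,\bar u(t))$ together with the first-order expansion $g(t,w) = \partial_u f(t,\bar u(t))\,w + o(|w|)$ as $w \to 0$, uniformly in $t$; hence $(H_0)$ holds with $S_0(t) = \mathrm{diag}(\partial_u f(t,\bar u(t)),1)$, and the rotation number of $Jz' = S_0(t)z$ is exactly $\rho_{\bar u}$. At infinity, I would use $(f_\infty)$ together with the boundedness of $\bar u$: writing $u = \bar u(t)+w$,
$$
\frac{g(t,w)}{w} = \frac{f(t,u)}{u}\cdot\frac{u}{w} - \frac{f(t,\bar u(t))}{w} \longrightarrow q_\infty(t) \qquad \text{as } |w| \to +\infty,
$$
uniformly in $t$, because $u/w = 1 + \bar u(t)/w \to 1$ and $f(t,\bar u(t))/w \to 0$. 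This gives $(H_\infty)$ with $S_\infty(t) = \mathrm{diag}(q_\infty(t),1)$, and the corresponding rotation number is $\rho_\infty$. By hypothesis $\rho_{\bar u} \neq \rho_\infty$, so condition \eqref{rhodifferent} is satisfied for the system in $w$.

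Applying Theorem~\ref{thmain} then yields an integer $k^*$ such that $w'' + g(t,w)=0$ admits subharmonic solutions of order $k$ for every $k \geq k^*$, with the number of non-equivalent ones diverging as $k \to +\infty$. Finally I would transfer this back to \eqref{eqsem}: since $\bar u$ is $T$-periodic, $u = \bar u + w$ is $lT$-periodic if and only if $w$ is, so $u$ has minimal period $kT$ exactly when $w$ does, and $u_1 \equiv u_2(\,\cdot\,+lT)$ if and only if $w_1 \equiv w_2(\,\cdot\,+lT)$; thus minimality of the period and non-equivalence are preserved under the translation, and the subharmonics of $w$ correspond bijectively to subharmonics of \eqref{eqsem} of the same order. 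I expect the only delicate point to be the verification of $(H_\infty)$ for the translated nonlinearity, namely checking that the uniform asymptotically linear behavior survives the shift by the bounded function $\bar u$ and that the remainder is genuinely $o(|z|)$ in the planar variable $z=(w,v)$ (this is immediate because the second component of the vector field is exactly $v$, so only the first component carries a remainder, and there $o(|w|)\subseteq o(|z|)$); everything else is routine bookkeeping.
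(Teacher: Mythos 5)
Your proposal is correct and follows essentially the same route as the paper's own proof: the same translation $x = u - \bar{u}(t)$, the same Hamiltonian system $x' = y$, $y' = -f(t,x+\bar{u}(t)) + f(t,\bar{u}(t))$, verification of $(H_0)$ and $(H_\infty)$ with linearizations whose rotation numbers are exactly $\rho_{\bar{u}}$ and $\rho_\infty$, and then an application of Theorem \ref{thmain} followed by transferring the subharmonics back via $u = x + \bar{u}$. Your extra care in checking that the remainder in $(H_\infty)$ is $o(\vert z \vert)$ (and not just $o(\vert w \vert)$) and that non-equivalence is preserved under the translation only makes explicit what the paper leaves as ``immediately checked.''
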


Notice that this result can be seen as a generalization of \cite[Th. 2.1]{BoOrZa14}, where the case $\rho_\infty = 0$ (and $\bar{u} \equiv 0$) is considered.

\begin{proof}[Proof of Theorem \ref{thsem}]
We first observe that a function $u(t)$ is a subharmonic solution of \eqref{eqsem} if, and only if, the function
$x(t) := u(t) - \bar{u}(t)$ is a subharmonic solution of the equation
\begin{equation}\label{eqsemaux}
x'' + f(t,x + \bar{u}(t)) - f(t,\bar{u}(t)) = 0.
\end{equation}
Accordingly, let us define, for $(x,y) \in \mathbb{R}^2$ and $t \in \mathbb{R}$, the Hamiltonian function
$$
H(t,x,y) = \frac{1}{2} y^2 + \int_0^x \left( f(t,\xi + \bar{u}(t)) - f(t,\bar{u}(t))\right) d\xi,
$$
giving rise to the planar Hamiltonian system
\begin{equation}\label{hsproof}
\left\{
\begin{array}{ll}
x' = y \\
y' = -f(t,x + \bar{u}(t)) + f(t,\bar{u}(t)).
\end{array}
\right.
\end{equation}
It is immediately checked that $H$ satisfies assumptions $(H_0)$ and $(H_\infty)$ of Section \ref{sec4.1} and that the linearized system at zero and at infinity are given, respectively, by
$$
\left\{
\begin{array}{ll}
x' = y \\
y' = - \partial_x f(t,\bar{u}(t))x,
\end{array}
\right. \qquad \mbox{ and } \qquad 
\left\{
\begin{array}{ll}
x' = y \\
y' = - q_\infty(t)x.
\end{array}
\right.
$$
The rotation numbers of these systems are nothing but the rotation number $\rho_{\bar{u}}$ and $\rho_{\infty}$ as defined before the statement of Theorem \ref{thsem}. Since these numbers are different, Theorem \ref{thmain} can be applied, yielding the existence of subharmonic solutions $(x(t),y(t))$ for the Hamiltonian system \eqref{hsproof}. Then, the function $x(t)$ is a subharmonic solution of equation 
\eqref{eqsemaux}; accordingly, $u(t) = x(t) + \bar{u}(t)$ 
is a subharmonic solution of equation \eqref{eqsem} and the conclusion follows. 
\end{proof}

Of course, on the lines of Corollary \ref{corcz}, the assumption of Theorem \ref{thsem} can equivalently be expressed in terms of Conley-Zehnder indices: indeed, if $i_{\bar{u}}$ and $i_{\infty}$ denote the Conley-Zehnder indices of the linearized systems, then 
$i_{\bar{u}} \neq i_{\infty}$ implies that $\rho_{\bar{u}} \neq \rho_\infty$. It is worth noticing, however, that
the analogous implication does not hold, in general, if the Conley-Zehnder index is replaced by the usual Morse index (or by the augmented Morse index). Indeed, as discussed in Section \ref{secMorse}, the definition of the Morse index is not natural, from the point of view of the rotation number, when the linear system is resonant (referring for instance to Figure \ref{Fig2}, we have that $1 = \mathfrak{m}_T(\lambda_1) \neq \mathfrak{m}_T(\lambda_2)$, while $\rho(\lambda) = 1$ for every $\lambda \in [\lambda_1,\lambda_2]$). 

As a a second example of applications, we focus on quasilinear second order equations of the type
\begin{equation}\label{eqqua}
(\varphi(u'))' + f(t,u) = 0, \qquad u \in \mathbb{R},
\end{equation}
where again $f: \mathbb{R} \times \mathbb{R} \to \mathbb{R}$ is a locally Lipschitz continuous function, $T$-periodic in the first variable, and, for some $a > 0$, 
$\varphi: (-a,a) \to \mathbb{R}$ is a $C^1$-diffeomorphism satisying $\varphi(0) = 0$ and $\varphi'(s) > 0$ for every $s \in (-a,a)$. As a model example, we can take of course $\varphi(s) = s/\sqrt{1-s^2}$, giving rise to the well known Minkowksi-curvature equation
$$
\left( \frac{u'}{\sqrt{1-(u')^2}}\right)' + f(t,u) = 0,
$$
which has been the object of extensive investigation in the last decades (see \cite{Ma13} for a survey on the topic and \cite{BoFe20} for some more recent references).

Similarly as before, if $\bar{u}(t)$ is a $T$-periodic solution of equation \eqref{eqqua} such that
$\partial_u f(t,\bar{u}(t))$ is well-defined, by $\rho_{\bar{u}}$ we now mean the rotation number of the linear Sturm-Liouville type equation 
$(\varphi'(\bar{u}'(t)) u' )' + \partial_u f(t,\bar{u}(t))u = 0$, meant as the planar Hamiltonian system
\begin{equation}\label{quasrho}
u' = \frac{v}{\varphi'(\bar{u}'(t))}, \qquad v' = - \partial_u f(t,\bar{u}(t))u.
\end{equation}
Again, it is easily seen that $\rho_{\bar{u}} \geq 0$.

As a corollary of Theorem \ref{thmain} and Corollary \ref{corzero}, the following result can be obtained. 

\begin{theorem}\label{thqua}
Assume that there exists a $T$-periodic solution $\bar{u}(t)$ of equation \eqref{eqqua} such that
$$
\rho_{\bar{u}} > 0.
$$
Then, there exists an integer $k^* \geq 1$ such that equation \eqref{eqqua} possesses subharmonic solutions of order $k$, for every $k \geq k^*$. More precisely, for every $k \geq k^*$ and for every integer $j$ coprime with $k$ and satisfying $1 \leq j < k \rho_{\bar u}$, 
equation \eqref{eqqua} possesses at least two non-equivalent subharmonic solutions $u_{k,j}^1(t), u_{k,j}^2(t)$ of order $k$
such that the $u_{k,j}^i(t) - \bar{u}(t)$ has exactly $2j$ zeros on $[0,kT)$ for $i=1,2$.
\end{theorem}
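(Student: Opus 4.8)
The plan is to deduce Theorem~\ref{thqua} from Theorem~\ref{thmain}, in the variant with $(H_\textrm{sub})$ replacing $(H_\infty)$, by the same translation-and-recast scheme used for Theorem~\ref{thsem}. First I would observe that $u$ is a $kT$-periodic solution of \eqref{eqqua} if and only if $x := u - \bar u$ solves the translated equation $(\varphi(x'+\bar u'(t)) - \varphi(\bar u'(t)))' = -[f(t,x+\bar u(t)) - f(t,\bar u(t))]$, and then recast the latter as a planar Hamiltonian system of the form \eqref{hsnon} in the variables $(x,w)$, where $w := \varphi(x'+\bar u'(t)) - \varphi(\bar u'(t))$ is the conjugate momentum. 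Solving for $x'$ gives $x' = \varphi^{-1}(w + \varphi(\bar u'(t))) - \bar u'(t) =: h(t,w)$ and $w' = -[f(t,x+\bar u(t)) - f(t,\bar u(t))] =: g(t,x)$; since $h$ depends only on $(t,w)$ and $g$ only on $(t,x)$, the system is Hamiltonian with $H(t,x,w) = \int_0^w h(t,s)\,ds - \int_0^x g(t,s)\,ds$, which is $T$-periodic in $t$ and satisfies $\nabla_z H(t,0)\equiv 0$.

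Next I would verify the hypotheses at zero and at infinity. Linearizing at $(x,w)=(0,0)$ yields $\partial_w h(t,0) = 1/\varphi'(\bar u'(t))$ and $\partial_x g(t,0) = -\partial_u f(t,\bar u(t))$, so the linearization at zero is exactly the Sturm--Liouville system \eqref{quasrho}; hence $(H_0)$ holds with the symmetric, $T$-periodic matrix $S_0(t)$ and rotation number $\rho_0 = \rho_{\bar u} > 0$. Since $\varphi$ is a singular diffeomorphism, $\varphi^{-1}$ takes values in the bounded interval $(-a,a)$, so $|x'| < a + \|\bar u'\|_\infty =: C$ for every solution; this linear growth bound on $x$, together with the local boundedness of $f$, gives global existence of all solutions, so the Poincar\'e map at time $kT$ is a well-defined area-preserving homeomorphism and the winding number $\textnormal{Rot}_k$ is well-defined. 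The system is in general not linearizable at infinity in the sense of $(H_\infty)$, so in its place I would establish the rotational property $(H_\textrm{sub})$, which plays the role of ``$\rho_\infty = 0$''.

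Establishing $(H_\textrm{sub})$ is the main obstacle. The argument is purely phase-planar and rests on three structural facts: $|x'| < C$ (bounded velocity, from the singular $\varphi$); for $|x|$ small one has $|w'| = |f(t,x+\bar u)-f(t,\bar u)| \leq L|x|$ with $L$ the Lipschitz constant of $f$ on a fixed compact set, so the momentum evolves slowly near the $w$-axis; and for $|w|$ large, $x' = \varphi^{-1}(w+\varphi(\bar u'))-\bar u'$ saturates to a value bounded away from $0$ with a definite sign, so $x$ must move fast there. To encircle the origin the trajectory must cross $\{x=0\}$ with $w$ of both signs while returning near its starting radius; but a solution with $|z_0|$ large cannot simultaneously keep $|x|$ small (the frozen momentum prevents $|w|$ from decaying there) and reach large $|x|$ quickly (the velocity bound forces time $> 2|x_{\max}|/C$ per half-loop). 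One concludes that for $R_k$ large enough any solution with $|z_0| = R_k$ completes strictly less than one turn on $[0,kT]$, i.e. $|\textnormal{Rot}_k(z_0)| < 1$; this is exactly the sublinear mechanism discussed in \cite{Bo11} and, for the Minkowski operator, in \cite{BoFe18}.

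With $(H_0)$ giving $\rho_0 = \rho_{\bar u} > 0$ and $(H_\textrm{sub})$ in force, I would then apply the variant of Theorem~\ref{thmain} described before Corollary~\ref{corzero}: for every $k \geq k^*$ and every integer $j$ coprime with $k$ satisfying $1 \leq j < k\rho_{\bar u}$, the twist condition holds between the small solutions (winding number larger than $j$, via Corollary~\ref{corrot} since $j/k < \rho_0$) and the large ones (winding number in $(-1,1)$), yielding two non-equivalent subharmonics of order $k$ with winding number $j$ on $[0,kT]$. Finally, to convert the winding number into the nodal count, I would use that at every zero of $x$ a nontrivial solution has $w \neq 0$ and, since $w$ and $x'$ share sign ($\varphi$ being increasing), the clockwise angle satisfies $\theta' = x'/w > 0$ there; hence all crossings of $\{x=0\}$ are transversal and in the same angular sense, so winding number $j$ corresponds to exactly $2j$ zeros of $x = u - \bar u$ on $[0,kT)$, as claimed.
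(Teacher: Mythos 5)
Your proposal is correct in substance, but it diverges from the paper's proof at the one genuinely delicate point: the behavior at infinity. The paper never verifies $(H_\textrm{sub})$ for the original system. Instead, it replaces $f$ by a truncated, \emph{bounded} nonlinearity $\tilde f$ coinciding with $f$ on a suitable compact set; for the resulting modified Hamiltonian system the boundedness of $\varphi^{-1}$ and $\tilde f$ makes $(H_\infty)$ hold trivially with $S_\infty \equiv 0$, hence $\rho_\infty = 0$, so the formally stated Corollary \ref{corzero} applies verbatim. The bounded-velocity property $\vert x' \vert < 2a$ (the same structural fact you exploit) is then used \emph{a posteriori}, in the claim $(\star)$, to show that any periodic solution of the modified equation whose difference with $\bar u$ vanishes somewhere stays in the region where $\tilde f = f$, and therefore solves the original equation. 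You instead keep $f$ untruncated and use the bounded velocity, together with the local boundedness of $f$, to verify the rotational condition $(H_\textrm{sub})$ directly, then invoke the variant of Theorem \ref{thmain} that the paper only describes informally before Corollary \ref{corzero}. Both routes share the translated Hamiltonian formulation, the identification of the linearization at zero with system \eqref{quasrho} (so $\rho_0 = \rho_{\bar u}$), and the nodal-count argument (transversal, uni-directional crossings of the $w$-axis, so winding number $j$ gives exactly $2j$ zeros). What each buys: the paper's truncation lets it lean on an already-proved corollary with a trivial check at infinity, at the price of modifying the equation and needing the transfer estimate $(\star)$; your route leaves the equation untouched, but shifts all the work onto $(H_\textrm{sub})$, which you only sketch, and onto a variant of the main theorem that is not formally stated in the paper.

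One caution about your $(H_\textrm{sub})$ step: as phrased (``the trajectory must cross $\{x=0\}$ with $w$ of both signs while returning near its starting radius'') it appeals to a return near the starting radius, which is neither needed nor available, since the radius is not conserved along solutions. The clean quantitative version is a dichotomy on the initial point, not on phases of the motion: if $\vert x(0)\vert > 2akT$, the velocity bound prevents $x$ from vanishing on $[0,kT]$, so the solution stays in an open half-plane and $\vert \textnormal{Rot}_k(z_0)\vert < 1/2$; if instead $\vert x(0)\vert \leq 2akT$, then $\vert x(t) \vert \leq 4akT$ on $[0,kT]$, hence $\vert w' \vert \leq F_k := \max\{\vert f(t,s+\bar u(t)) - f(t,\bar u(t))\vert : t \in [0,T],\ \vert s \vert \leq 4akT\}$, so that choosing $R_k^2 > (2akT)^2 + (F_k kT)^2$ forces $w$ to keep a constant sign on $[0,kT]$, and again $\vert \textnormal{Rot}_k(z_0)\vert < 1/2$. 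With this repair (and with the twist at the inner boundary obtained, as you indicate, from Corollary \ref{corrot} and the comparison lemma of \cite{MRZ}), your argument is complete and yields the full statement, including the count $1 \leq j < k\rho_{\bar u}$ with $j$ coprime to $k$.
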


Notice that we do not impose any condition for the function $f$ at infinity; thus Theorem \ref{thqua} can be considered a generalization of \cite[Th. 1]{BoGa13} (in that paper, $\bar{u} \equiv 0$ and an explicit condition is considered instead of $\rho_{\bar{u}} > 0$). 
We also mention that, as proved in \cite[Appendix C]{BoFe20}, the assumption $\rho_{\bar{u}} > 0$ is equivalent to the fact that the
(unique) principal eigenvalue of the $T$-periodic problem for
$$
(\varphi'(\bar{u}'(t)) u' )' + (\lambda + \partial_u f(t,\bar{u}(t))) u = 0
$$
is strictly negative.

\begin{proof}[Proof of Theorem \ref{thqua}]
Let us consider a locally Lipschitz continuous function 
$\tilde f: \mathbb{R} \times \mathbb{R} \to \mathbb{R}$ with the following properties:
\begin{itemize}
\item[(i)] $\tilde f$ is $T$-periodic in the first variable,
\item[(ii)] $\tilde f(t,u) = f(t,u)$ for every $t \in [0,T]$ and $u \in \mathbb{R}$ with $\vert u \vert \leq  L :=\Vert  \bar{u} \Vert_\infty + 2aT$,
\item[(iii)] $\tilde f$ is bounded,
\end{itemize}
(for instance, we can take $\tilde f(t,u) = f(t,\max\{-L,\min\{u,L\}\})$) and, accordingly, we consider the equation 
\begin{equation}\label{eqqua2}
(\varphi(u'))' + \tilde f(t,u) = 0.
\end{equation}
We claim the following:
\begin{itemize}
\item[($\star$)] if $u(t)$ is a $kT$-periodic solution of \eqref{eqqua2} such that $u(t) - \bar{u}(t)$ vanishes somewhere in $[0,kT]$, then
$\vert u(t) \vert < L$ for every $t \in [0,kT]$.
\end{itemize} 
Indeed, the function $x(t) := u(t) - \bar{u}(t)$ is such that $\vert x'(t) \vert < 2a$ for every $t$, and so, since $x(t)$ vanishes at some $t^* \in [0,kT]$, it holds that
$$
\vert u(t) \vert \leq \vert \bar u(t) \vert + \vert x(t) \vert \leq \Vert \bar{u} \Vert_\infty + \left\vert \int_{t^*}^t x'(s) \,ds \right\vert \leq 
\Vert  \bar{u} \Vert_\infty + 2aT = L
$$
for every $t \in [0,kT]$. 
	
As a consequence of $(\star)$, any subharmonic solution $u(t)$ of equation \eqref{eqqua2} with the property that $u(t) - \bar{u}(t)$ vanishes somewhere on $\mathbb{R}$ is also a subharmonic solution of equation \eqref{eqqua}, since $f$ and $\tilde f$ coincide for $\vert u \vert \leq L$. Hence, to prove Theorem \ref{thqua} it is enough to show that the multiplicity result stated therein holds true for the modified equation \eqref{eqqua2}. 
	
Let us set $\bar{v}(t) = \varphi(\bar{u}'(t))$ for $t \in [0,T]$ and define, for $(x,y) \in \mathbb{R}^2$ and $t \in \mathbb{R}$, the Hamiltonian function
$$
H(t,x,y) =\int_0^y \left( \varphi^{-1}(\xi + \bar{v}(t)) -\varphi^{-1}(\bar{v}(t))\right) d\xi+ \int_0^x \left( \tilde f(t,\xi + \bar{u}(t)) - \tilde f(t,\bar{u}(t))\right) d\xi,
$$
giving rise to the planar Hamiltonian system
\begin{equation}\label{hsproof2}
\left\{
\begin{array}{ll}
x' = \varphi^{-1}(y + \bar{v}(t)) - \varphi^{-1}(\bar{v}(t)) \\
y' = -\tilde f(t,x + \bar{u}(t)) + \tilde f(t,\bar{u}(t))\,.
\end{array}
\right.
\end{equation}
It is immediately checked that $H$ satisfies assumption $(H_0)$ and that the linearized system at zero is given by 
$$
\left\{
\begin{array}{ll}
x' = (\varphi^{-1})'(\bar v (t)) y = \frac{y}{\varphi'(\bar u'(t))} \vspace{0.2cm}\\
y' = - \partial_x \tilde f(t,\bar{u}(t))x = - \partial_x  f(t,\bar{u}(t))x.
\end{array}
\right.
$$
Moreover, since $\varphi^{-1}$ and $\tilde f$ are bounded, $H$ satisfies assumption 
$(H_\infty)$ and the linearized system at infinity is given by
$$
\left\{
\begin{array}{ll}
x' = 0 \\
y' = 0.
\end{array}
\right.
$$
Hence, $\rho_\infty = 0$. On the other hand, the linearized system at zero is nothing but system \eqref{quasrho} and so its rotation number is $\rho_{\bar{u}}$. Thus, Corollary \ref{corzero} can be applied, yielding the existence, for $k$ large enough, of $k$-th order subharmonic solutions $(x_{k,j}^i(t),y_{k,j}^i(t))$ (with $i=1,2$) for the Hamiltonian system \eqref{hsproof2}, with winding number on the interval $[0,kT)$ equal to $j \in [1,k \rho_{\bar{u}})$ (here $j$ and $k$ are coprime). Due to $x' = y/\varphi'(\bar{u}'(t))$, the $y$ axis can be crossed only in the clockwise sense, and so the winding number of the path $(x_{k,j}^i(t),y_{k,j}^i(t))$ is half the number of crossing of the $y$-axis, that is, the number of zeros of the function $x_{k,j}^i(t)$ on the interval $[0,kT)$.

Setting $u_{k,j}^i(t) = x_{k,j}^i(t) + \bar{u}(t)$ and $v_{k,j}^i(t) = y_{k,j}^i(t) + \bar{v}(t))$, we thus have that the function $(u_{k,j}^i(t),v_{k,j}^i(t))$ is a subharmonic solution of 
$$
\left\{
\begin{array}{ll}
u' = \varphi^{-1}(v) \\
v' = -\tilde f(t,u),
\end{array}
\right.
$$ 
and so $u_{k,j}^i(t)$ is a subharmonic solution of equation \eqref{eqqua2}. Moreover, by the previous discussion, the number of zeros of $u_{k,j}^i(t) - \bar{u}(t) = x_{k,j}^i(t)$ on the interval $[0,kT)$ is exactly $2j$. As already mentioned, this is enough to conclude that the function $u_{k,j}^i(t)$ is a (subharmonic) solution of equation \eqref{eqqua}, as well.
\end{proof} 

\begin{remark}
\rm
It is worth noticing that, differently from the semilinear case (cf. the proof of Theorem \ref{thsem}), due to the nonlinearity of the differential operator the linear change of variable $x(t) = u(t) - \bar{u}(t)$
does not convert the equation $(\varphi(u'))' + f(t,u) = 0$ into an equation of the same type. However, working at the level of the Hamiltonian system allows us to easily overcome this apparent difficulty.
\end{remark}

\bibliographystyle{plain}
\bibliography{bmref}

\begin{thebibliography}{10}

\bibitem{Abb}
A.~Abbondandolo.
\newblock Sub-harmonics for two-dimensional {H}amiltonian systems.
\newblock {\em NoDEA Nonlinear Differential Equations Appl.}, 6(4):341--355,
  1999.

\bibitem{Abb-book}
A.~Abbondandolo.
\newblock {\em Morse theory for {H}amiltonian systems}, volume 425 of {\em
  Chapman \& Hall/CRC Research Notes in Mathematics}.
\newblock Chapman \& Hall/CRC, Boca Raton, FL, 2001.

\bibitem{AmZe80}
H.~Amann and E.~Zehnder.
\newblock Periodic solutions of asymptotically linear {H}amiltonian systems.
\newblock {\em Manuscripta Math.}, 32(1-2):149--189, 1980.

\bibitem{Bo11}
A.~Boscaggin.
\newblock Subharmonic solutions of planar {H}amiltonian systems: a rotation
  number approach.
\newblock {\em Adv. Nonlinear Stud.}, 11(1):77--103, 2011.

\bibitem{BoFe18}
A.~Boscaggin and G.~Feltrin.
\newblock Positive subharmonic solutions to nonlinear {ODE}s with indefinite
  weight.
\newblock {\em Commun. Contemp. Math.}, 20(1):1750021, 26, 2018.

\bibitem{BoFe20}
A.~Boscaggin and G.~Feltrin.
\newblock Positive periodic solutions to an indefinite {M}inkowski-curvature
  equation.
\newblock {\em J. Differential Equations}, 269(7):5595--5645, 2020.

\bibitem{BoGa11}
A.~Boscaggin and M.~Garrione.
\newblock Resonance and rotation numbers for planar {H}amiltonian systems:
  multiplicity results via the {P}oincar\'{e}-{B}irkhoff theorem.
\newblock {\em Nonlinear Anal.}, 74(12):4166--4185, 2011.

\bibitem{BoGa13}
A.~Boscaggin and M.~Garrione.
\newblock Sign-changing subharmonic solutions to unforced equations with
  singular {$\phi$}-{L}aplacian.
\newblock In {\em Differential and difference equations with applications},
  volume~47 of {\em Springer Proc. Math. Stat.}, pages 321--329. Springer, New
  York, 2013.

\bibitem{BoOrZa14}
A.~Boscaggin, R.~Ortega, and F.~Zanolin.
\newblock Subharmonic solutions of the forced pendulum equation: a symplectic
  approach.
\newblock {\em Arch. Math. (Basel)}, 102(5):459--468, 2014.

\bibitem{BoZa12}
A.~Boscaggin and F.~Zanolin.
\newblock Subharmonic solutions for nonlinear second order equations in
  presence of lower and upper solutions.
\newblock {\em Discrete Contin. Dyn. Syst.}, 33(1):89--110, 2013.

\bibitem{CLM94}
S.~E. Cappell, R.~Lee, and E.~Y. Miller.
\newblock On the {M}aslov index.
\newblock {\em Comm. Pure Appl. Math.}, 47(2):121--186, 1994.

\bibitem{CoZe84}
C.~Conley and E.~Zehnder.
\newblock Morse-type index theory for flows and periodic solutions for
  {H}amiltonian equations.
\newblock {\em Comm. Pure Appl. Math.}, 37(2):207--253, 1984.

\bibitem{DaRe02}
F.~Dalbono and C.~Rebelo.
\newblock Poincar\'{e}-{B}irkhoff fixed point theorem and periodic solutions of
  asymptotically linear planar {H}amiltonian systems.
\newblock {\em Rend. Sem. Mat. Univ. Politec. Torino}, 60(4):233--263, 2002.

\bibitem{DO18}
Y.~Deng and D.~Offin.
\newblock Stability of periodic orbits by {C}onley-{Z}ehnder index theory.
\newblock {\em https://arxiv.org/abs/1807.05171}, 2018.

\bibitem{DIZ93}
T.~Ding, R.~Iannacci, and F.~Zanolin.
\newblock Existence and multiplicity results for periodic solutions of
  semilinear {D}uffing equations.
\newblock {\em J. Differential Equations}, 105(2):364--409, 1993.

\bibitem{DiZa92}
T.~Ding and F.~Zanolin.
\newblock Periodic solutions and subharmonic solutions for a class of planar
  systems of {L}otka-{V}olterra type.
\newblock In {\em World {C}ongress of {N}onlinear {A}nalysts '92, {V}ol.
  {I}--{IV} ({T}ampa, {FL}, 1992)}, pages 395--406. De Gruyter, Berlin, 1996.

\bibitem{Ek}
I.~Ekeland.
\newblock {\em Convexity methods in {H}amiltonian mechanics}, volume~19 of {\em
  Ergebnisse der Mathematik und ihrer Grenzgebiete (3) [Results in Mathematics
  and Related Areas (3)]}.
\newblock Springer-Verlag, Berlin, 1990.

\bibitem{FoSaZa12}
A.~Fonda, M.~Sabatini, and F.~Zanolin.
\newblock Periodic solutions of perturbed {H}amiltonian systems in the plane by
  the use of the {P}oincar\'{e}-{B}irkhoff theorem.
\newblock {\em Topol. Methods Nonlinear Anal.}, 40(1):29--52, 2012.

\bibitem{FoTo19}
A.~Fonda and R.~Toader.
\newblock Subharmonic solutions of {H}amiltonian systems displaying some kind
  of sublinear growth.
\newblock {\em Adv. Nonlinear Anal.}, 8(1):583--602, 2019.

\bibitem{GaZh00}
S.~Gan and M.~Zhang.
\newblock Resonance pockets of {H}ill's equations with two-step potentials.
\newblock {\em SIAM J. Math. Anal.}, 32(3):651--664, 2000.

\bibitem{GiMa}
P.~Gidoni and A.~Margheri.
\newblock Lower bounds on the number of periodic solutions for asymptotically
  linear planar {H}amiltonian systems.
\newblock {\em Discrete Contin. Dyn. Syst.}, 39(1):585--606, 2019.

\bibitem{Ha}
J.K. Hale.
\newblock {\em Ordinary differential equations}.
\newblock Pure and Applied Mathematics, Vol. XXI. Wiley-Interscience [John
  Wiley \& Sons], New York-London-Sydney, 1969.

\bibitem{He79}
M.R. Herman.
\newblock Sur la conjugaison diff\'{e}rentiable des diff\'{e}omorphismes du
  cercle \`a des rotations.
\newblock {\em Inst. Hautes \'{E}tudes Sci. Publ. Math.}, (49):5--233, 1979.

\bibitem{Long}
Y.~Long.
\newblock {\em Index theory for symplectic paths with applications}, volume 207
  of {\em Progress in Mathematics}.
\newblock Birkh\"{a}user Verlag, Basel, 2002.

\bibitem{LM20}
J.~L\'{o}pez-G\'{o}mez and E.~Mu{\~{n}}oz-Hern\'{a}ndez.
\newblock Global structure of subharmonics in a class of periodic predator-prey
  models.
\newblock {\em Nonlinearity}, 33(1):34--71, 2020.

\bibitem{LMZ20}
J.~L\'{o}pez-G\'{o}mez, E.~Mu{\~{n}}oz-Hern\'{a}ndez, and F.~Zanolin.
\newblock On the applicability of the {P}oincar\'{e}-{B}irkhoff twist theorem
  to a class of planar periodic predator-prey models.
\newblock {\em Discrete Contin. Dyn. Syst.}, 40(4):2393--2419, 2020.

\bibitem{LMZ21}
J.~L\'{o}pez-G\'{o}mez, E.~Mu{\~{n}}oz-Hern\'{a}ndez, and F.~Zanolin.
\newblock The {P}oincar\'{e}-{B}irkhoff theorem for a class of degenerate
  planar {H}amiltonian systems.
\newblock {\em Adv. Nonlinear Stud.}, 21(3):489--499, 2021.

\bibitem{MW}
W.~Magnus and S.~Winkler.
\newblock {\em Hill's equation}.
\newblock Dover Publications, Inc., New York, 1979.
\newblock Corrected reprint of the 1966 edition.

\bibitem{MaReTo14}
A.~Margheri, C.~Rebelo, and P.J. Torres.
\newblock On the use of {M}orse index and rotation numbers for multiplicity
  results of resonant {BVP}s.
\newblock {\em J. Math. Anal. Appl.}, 413(2):660--667, 2014.

\bibitem{MRZ}
A.~Margheri, C.~Rebelo, and F.~Zanolin.
\newblock Maslov index, {P}oincar\'{e}-{B}irkhoff theorem and periodic
  solutions of asymptotically linear planar {H}amiltonian systems.
\newblock {\em J. Differential Equations}, 183(2):342--367, 2002.

\bibitem{Ma13}
J.~Mawhin.
\newblock Resonance problems for some non-autonomous ordinary differential
  equations.
\newblock In {\em Stability and bifurcation theory for non-autonomous
  differential equations}, volume 2065 of {\em Lecture Notes in Math.}, pages
  103--184. Springer, Heidelberg, 2013.

\bibitem{MHO}
K.R. Meyer, G.R. Hall, and D.~Offin.
\newblock {\em Introduction to {H}amiltonian dynamical systems and the
  {$N$}-body problem}, volume~90 of {\em Applied Mathematical Sciences}.
\newblock Springer, New York, second edition, 2009.

\bibitem{Mo81}
J.~Moser.
\newblock An example of a {S}chroedinger equation with almost periodic
  potential and nowhere dense spectrum.
\newblock {\em Comment. Math. Helv.}, 56(2):198--224, 1981.

\bibitem{WaQi21}
S.~Wang and D.~Qian.
\newblock Subharmonic solutions of indefinite {H}amiltonian systems via
  rotation numbers.
\newblock {\em Adv. Nonlinear Stud.}, 21(3):557--578, 2021.

\end{thebibliography}

\end{document}